\documentclass[11pt]{article}

\input diagram.tex
\usepackage{amsmath} % eg. for \begin{equation*}}
\usepackage[mathscr]{eucal}
\usepackage{amssymb} % eg. for \leqslant
\usepackage{theorem}
\usepackage{tikz-cd}
\usetikzlibrary{positioning}

\usepackage{enumerate}

\theoremstyle{change}  % puts numbers IN FRONT of "Theorem"
\newtheorem{theorem}{Theorem}[section] % defines environment "Theorem".
\newtheorem{lemma}[theorem]{Lemma}  % defines environment "Lemma", that
\newtheorem{proposition}[theorem]{Proposition}
\newtheorem{corollary}[theorem]{Corollary}
\theorembodyfont{\rmfamily}  % has the effect that the content of Remark,
\newtheorem{remark}[theorem]{Remark}

\newtheorem{definition}[theorem]{Definition}
\newtheorem{notation}[theorem]{Notation}
\newenvironment{proof}{\noindent{\bf Proof}\ }{\qed\bigskip}

\renewcommand{\le}{\leqslant} % needs amssymb-Paket

\newcommand{\Aut}{\mathrm{Aut}}

\newcommand{\calD}{\mathcal{D}}               
\newcommand{\calE}{\mathcal{E}}
\newcommand{\calF}{\mathcal{F}}

\newcommand{\calI}{\mathcal{I}}

\newcommand{\calP}{\mathcal{P}}

\newcommand{\calS}{\mathcal{S}}

\newcommand{\calT}{\mathcal{T}}

\newcommand{\catfont}{\mathsf}

\newcommand{\Def}{\mathrm{Def}}

\newcommand{\FF}{\mathbb{F}}

\newcommand{\Hom}{\mathrm{Hom}}

\newcommand{\id}{\mathrm{id}}

\newcommand{\Ind}{\mathrm{Ind}}

\newcommand{\Inf}{\mathrm{Inf}}

\newcommand{\Isom}{\mathrm{Iso}}

\newcommand{\lexp}[2]{\setbox0=\hbox{$#2$} \setbox1=\vbox to
                 \ht0{}\,\box1^{#1}\!#2}

\newcommand{\lMod}[1]{\llap{\phantom{|}}_{#1}\catfont{Mod}}

\newcommand{\lset}[1]{\llap{\phantom{|}}_{#1}\catfont{set}}

\newcommand{\NN}{\mathbb{N}}

\newcommand{\Out}{\mathrm{Out}}

\newcommand{\qed}{\nobreak\hfill
                  \vbox{\hrule\hbox{\vrule\hbox to 5pt
                  {\vbox to 8pt{\vfil}\hfil}\vrule}\hrule}}
\newcommand{\Res}{\mathrm{Res}}

\newcommand{\stab}{\mathrm{stab}}

\newcommand{\ZZ}{\mathbb{Z}}

\newcommand{\Br}{\mathrm{Br}}
\newcommand{\Id}{\mathrm{Id}}

\newcommand{\DD}{D^{\Delta}}

\newcommand{\TD}{T^{\Delta}}

\newcommand{\Mv}{M\langle v\rangle}
\newcommand{\Nw}{N\langle w\rangle}

\def\Lu{L\langle u\rangle}

\newcommand{\lcm}{\mathrm{lcm}}
\newcommand{\Dpaircat}{\DD\text{-}\mathsf{pair}}
\newcommand{\pset}[1]{\lset{#1}}

\newcommand{\lbiset}[2]{\llap{\phantom{|}}_{#1}\catfont{set}_{#2}}
\newcommand{\pbiset}[2]{\lbiset{#1}{#2}}
\def\un{\mathbf{1}}
\title{The $\DD$-pair biset category}
\author{Robert Boltje, Serge Bouc and Deniz Y\i lmaz}
\date{}

\begin{document}
\sloppy
\maketitle

\begin{abstract} 

	Let $p$ be a prime number. A $\DD$-pair is a pair consisting of a finite $p$-group and a $p'$-automorphism of the group. In this paper, we introduce diagonal $\DD$-pair bisets and a category whose objects are $\DD$-pairs and whose morphism groups are Grothendieck groups of diagonal $\DD$-pair bisets. Our main result shows that, over suitable coefficient rings, the category of diagonal $p$-permutation functors is equivalent to the category of linear functors on a natural quotient of this new category. In this way, diagonal $p$-permutation functors can be studied through a category built only from finite $p$-groups and their automorphisms of $p'$-order.
\end{abstract}

{\flushleft{\bf MSC2020:}} 18A25, 19A22, 20C20, 20J15. 
{\flushleft{\bf Keywords:}} bisets, diagonal $p$-permutation functors, $\DD$-pairs.

\section{Introduction}
Let $p$ be a prime number, let $R$ be a unital commutative ring, and $k$ be an algebraically closed field of characteristic $p$. The diagonal $p$-permutation category $Rpp_k^\Delta$ (\cite{BoucYilmaz2020},\cite{BoucYilmaz2022}), has all the finite groups as objects, and morphisms given by $R$-linear extensions of Grothendieck groups of diagonal $p$-permutation bimodules. The category of diagonal $p$-permutation functors over $R$, that is, $R$-linear functors from $Rpp_k^\Delta$ to the category $\lMod{R}$ of all $R$-modules, was introduced in \cite{BoucYilmaz2020} (in the case that $R$ is a field of characteristic 0), and developed in \cite{BoucYilmaz2022}, \cite{BoucYilmaz2024} and \cite{BoucYilmaz2026}. It has allowed for interesting new results in block theory: To each {\em group-block} pair $(G,b)$, that is a pair of a finite group $G$ and a block idempotent $b$ of $kG$, is naturally attached a diagonal $p$-permutation functor $R T^\Delta_{G,b}$ over $R$. This leads to the notion of {\em functorial equivalence} (over $R$) of such group-block pairs (\cite{BoucYilmaz2022}). \par
When $R$ is an algebraically closed field of characteristic zero, the category of diagonal $p$-permutation functors over $R$ is semisimple. Its simple objects $S_{L,u,V}$ are parametrized by equivalence classes of triples $(L,u,V)$ where $(L,u)$ is a $D^\Delta$-pair, that is, a pair consisting of a finite $p$-group $L$ and a $p'$-automorphism $u$ of $L$, and $V$ is a simple $R\Out(L,u)$-module. The multiplicity of a simple functor $S_{L,u,V}$ as a direct summand of $R T^\Delta_{G,b}$ now appears as a new invariant attached to a group block pair $(G,b)$ and a triple $(L,u,V)$. These invariants yield in particular a finiteness theorem (\cite{BoucYilmaz2022}) in the spirit of Donovan's conjecture, saying that for a given $p$-group $D$, there is only a finite number of group block pairs $(G,b)$, where $b$ has defect groups isomorphic to $D$, up to functorial equivalence over $R$. More recently, this led us to an equivalent formulation of Alperin's blockwise weight conjecture in terms of diagonal $p$-permutation functors (\cite{BoltjeBoucYilmaz2026}).\par
Our main goal of the present paper is the introduction of a new $R$-linear category $R\mathcal{P}^\Delta$, see Section~\ref{sec DDelta-pair biset functors}. Its objects are the $D^\Delta$-pairs and its morphisms are the scalar extensions to $R$ of Grothendieck groups of so-called diagonal ``$D^\Delta$-pair bisets'' (also newly introduced here, see Section~\ref{sec Dpair bisets}). This category can be seen as an analogue of the biset category over $R$, where the objects are all finite groups and the morphisms come from bisets for finite groups. The category $R\mathcal{P}^\Delta$ has a particular quotient category $\overline{RP^\Delta}$ with the same objects, and morphisms given by natural equivalence classes of morphisms in $R\mathcal{P}^\Delta$. This quotient category has the following remarkable property: when $R$ is an integral domain in which all $p'$-integers are invertible and which contains all roots of unity of $p'$-order, then the category of $R$-linear functors from $\overline{RP^\Delta}$ to $\lMod{R}$ is equivalent to the category of $R$-linear functors from $Rpp_k^\Delta$ to $\lMod{R}$, see Section~\ref{sec main theorem}. Thus, when considering $R$-linear functors, one can replace the more complicated theory of $p$-permutation $kG$-modules for arbitrary finite groups $G$ with the theory of how $D^\Delta$-pairs $(L,u)$ act on finite sets. Our hope is that, using this equivalence, one can obtain new information on the structure of the diagonal $p$-permutation functors $RT^\Delta_{G,b}$ attached to group-block pairs, only from the knowledge of related information on $p$-groups and their automorphisms of $p'$-order.

\begin{notation}
For any group $G$ and $x\in G$, we denote by $i_x\colon G\to G$ the conjugation map $g\mapsto xgx^{-1}$. We also set $\lexp{x}{g}:=i_x(g)$ and $x^g:= x^{-1}gx$. We denote by $|G|$ the order of $G$ and will sometimes denote the order of a group element $g\in G$ by $|g|$.
\end{notation}

%%%%%%%%%%%%%%%%%%%%% SECTION 2 %%%%%%%%%%%%%%%%%%%%%%%%%%%%%%%%

\section{$\DD$-pairs}

In this section we define the category $\Dpaircat$ of $\DD$-pairs for a fixed prime $p$.

\begin{definition}
	A \emph{$\DD$-pair} $(L,u)$ is a pair consisting of a finite $p$-group $L$ and an element $u\in \Aut(L)$ of $p^\prime$-order, i.e., of order not divisible by $p$. A \emph{morphism} between two $\DD$-pairs $(L,u)$ and $(M,v)$ is a group homomorphism $f: L\to M$ satisfying $v\circ f=f\circ u: L\to M$. Together with the usual composition of group homomorphisms, this defines a category $\Dpaircat$. In fact, if also $g: (M,v)\to (N,w)$ is a morphism of $\DD$-pairs, then the composition $g\circ f$ satisfies $w\circ g\circ f=g\circ v\circ f=g\circ f\circ u$ and is therefore again a morphism. The identity morphism of $(L,u)$ is $\id_L$.
\end{definition}

\begin{remark}
{\rm (a)} Associating to any finite $p$-group $L$ the $\DD$-pair $(L,\id_L)$ extends in an obvious way to a functor from the category of finite $p$-groups and group homomorphisms to the category of $\Dpaircat$. This functor is fully faithful.
	
\smallskip
{\rm (b)} The category $\Dpaircat$ has finite products. For $\DD$-pairs $(L,u)$ and $(M,v)$ we set $(L,u)\times (M,v):=(L\times M,u\times v)$. Together with the usual projection maps $L\times M\to L$ and $L\times M\to M$, this is a categorical product of $(L,u)$ and $(M,v)$.
	
\smallskip
(c) For any $\DD$-pair one can consider the semidirect product $\Lu=L\rtimes\langle u\rangle$ whose elements will be written as $lu^i$ with $l\in L$ and $i\in \ZZ$. One has the commutation rule $u^i l=u^i(l)u^i$ in $L\langle u\rangle$. Note that $L$ is the unique Sylow $p$-subgroup of $L\langle u\rangle$, hence the set of all $p$-elements of $\Lu$.
\end{remark}

\begin{lemma}
	Let $(L,u)$ and $(M,v)$ be $\DD$-pairs and let $f: L\to M$ be a surjective group homomorphism satisfying $v\circ f=f\circ u$. Then there exists a unique extension $F: \Lu \to \Mv$ of $f$ with $F(u)=v$.
\end{lemma}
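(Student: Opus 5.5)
The plan is to define $F$ by the only possible formula, $F(lu^i):=f(l)v^i$ for $l\in L$ and $i\in\ZZ$, and then check that it is well defined and multiplicative. Uniqueness is immediate: $\Lu$ is generated by $L$ and $u$, so any homomorphism extending $f$ with $F(u)=v$ must be given by this formula.

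The main obstacle is well-definedness. An element $lu^i$ determines $l$ uniquely, but determines $i$ only modulo the order $n$ of $u$. So I need $v^n=\id_M$, that is, the order of $v$ divides $n$. This is exactly where surjectivity of $f$ is used. Iterating $v\circ f=f\circ u$ gives $v^n\circ f=f\circ u^n=f$. Since $f$ is surjective, $v^n=\id_M$. Hence $u^i\mapsto v^i$ is a well-defined group homomorphism $\langle u\rangle\to\langle v\rangle$, and $F$ is well defined on $\Lu=L\rtimes\langle u\rangle$. Without surjectivity this can fail; for example, take $f$ trivial with $v$ of larger order than $u$.

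Next I will show $F$ is a homomorphism. I will use the standard criterion for semidirect products: a pair of homomorphisms $\alpha\colon L\to H$ and $\beta\colon\langle u\rangle\to H$ extends to a homomorphism $L\rtimes\langle u\rangle\to H$, via $lu^i\mapsto\alpha(l)\beta(u)^i$, provided
\[
\beta(u)\,\alpha(l)\,\beta(u)^{-1}=\alpha(u(l))\quad\text{for all } l\in L.
\]
Directly, this amounts to the computation $F(lu^i\cdot l'u^j)=F(l\,u^i(l')\,u^{i+j})=f(l)f(u^i(l'))v^{i+j}$. With $\alpha=f$ (composed with the inclusion $M\le\Mv$) and $\beta\colon u^i\mapsto v^i$, the condition reads $v f(l)v^{-1}=v(f(l))=f(u(l))$. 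This holds by the commutation rule in $\Mv$ together with the hypothesis $v\circ f=f\circ u$; iterating it gives $v^i f(l')v^{-i}=f(u^i(l'))$ for all $i$. Therefore $F(lu^i)F(l'u^j)=f(l)v^i f(l')v^{-i}v^{i+j}$ equals $F(lu^i\cdot l'u^j)$, and $F$ is a homomorphism extending $f$ with $F(u)=v$.
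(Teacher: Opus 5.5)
Your proposal is correct and follows the paper's proof essentially step for step. You define $F(lu^i)=f(l)v^i$, use surjectivity of $f$ to get $v^{|u|}=\id_M$ (so $F$ is well defined), and verify multiplicativity via $v^i f(l')v^{-i}=f(u^i(l'))$.
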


\begin{proof}
	The uniqueness of $F$ is clear, since $L$ and $u$ generate $L\langle u\rangle$. To prove the existence, first note that if $i\in \ZZ$ is such that $u^i=\id_L$ then also $v^i=\id_M$. Indeed, $u^i=\id_L$ implies $v^i\circ f=f\circ u^i=f$ and since $f$ is surjective, this implies $v^i=\id$. This now implies that the function $F: L\langle u\rangle\to \Mv$, $lu^i\mapsto f(l)v^i$, where $l\in L$ and $i\in \ZZ$, is well-defined. The function $F$ clearly extends $f$ and it remains to be shown that $F$ is a group homomorphism. So let $l,l'\in L$ and $i,j\in \ZZ$. Then
	\[
	F(lu^il'u^j)=F(lu^i(l')u^{i+j})=f(lu^i(l'))v^{i+j}
	\]
	and also
	\[
	F(lu^i)F(l'u^j)=f(l)v^if(l')v^j=f(l)v^i(f(l'))v^{i+j}=f(l)f(u^i(l'))v^{i+j}=f(lu^i(l'))v^{i+j}.
	\]
\end{proof}

\begin{corollary}
	Let $(L,u)$ and $(M,v)$ be $\DD$-pairs. Then the following are equivalent:
	
	\smallskip
	{\rm (i)} $(L,u)\cong (M,v)\in \Dpaircat$.
	
	\smallskip
	{\rm (ii)} There exists a group isomorphism $f: L\to M$ satisfying $v\circ f=f\circ u$.
	
	\smallskip
	{\rm (iii)} There exists a group isomorphism $F: L\langle u\rangle\to M\langle v\rangle$ satisfying $F(u)=v$.
		
	\smallskip
	{\rm (iv)} There exists a group isomorphism $F: L\langle u\rangle\to M\langle v\rangle$ such that $F(u)$ is $M\langle v\rangle$-conjugate to $v$.
\end{corollary}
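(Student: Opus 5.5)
I will prove the cycle (i)$\Leftrightarrow$(ii)$\Rightarrow$(iii)$\Rightarrow$(iv)$\Rightarrow$(ii).

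\emph{(i)$\Leftrightarrow$(ii).} An isomorphism in $\Dpaircat$ is a morphism $f$ with a two-sided inverse morphism $g$, so $f$ is a group isomorphism with $v\circ f=f\circ u$. Conversely, if $f$ is a group isomorphism with $v\circ f=f\circ u$, then composing with $f^{-1}$ on both sides gives $f^{-1}\circ v=u\circ f^{-1}$. Hence $f^{-1}$ is a morphism $(M,v)\to(L,u)$ in $\Dpaircat$.

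\emph{(ii)$\Rightarrow$(iii).} Apply the preceding Lemma to the surjection $f$ to get $F\colon\Lu\to\Mv$ with $F(u)=v$. Applying the Lemma to $f^{-1}$ as well, which is legitimate by the above, gives $G\colon\Mv\to\Lu$ with $G(v)=u$. Then $G\circ F$ and $F\circ G$ fix the generators $L,u$, respectively $M,v$, so they are identities. Thus $F$ is an isomorphism.

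\emph{(iii)$\Rightarrow$(iv)} is trivial.

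\emph{(iv)$\Rightarrow$(ii).} Suppose $F(u)=xvx^{-1}$ with $x\in\Mv$. Replacing $F$ by $i_x^{-1}\circ F$, which is still an isomorphism, we may assume $F(u)=v$. The isomorphism $F$ maps the unique Sylow $p$-subgroup $L$ onto the unique Sylow $p$-subgroup $M$, by the Remark, part (c). So $f:=F|_L\colon L\to M$ is a group isomorphism. For $l\in L$ we have $u(l)=ulu^{-1}$ in $\Lu$, hence
\[
f(u(l))=F(u)F(l)F(u)^{-1}=vf(l)v^{-1}=v(f(l)),
\]
that is, $f\circ u=v\circ f$.

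The only delicate point is this last step. One must remember that conjugation by $x$ is an inner automorphism, so the modified map is still an isomorphism. One must also use that $p$-Sylow uniqueness forces $F(L)=M$. Everything else is routine.
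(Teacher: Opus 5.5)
Your proof is correct and follows essentially the same route as the paper: (i)$\Leftrightarrow$(ii) by checking that $f^{-1}$ is again a morphism, (ii)$\Rightarrow$(iii) via the extension lemma, and (iv)$\Rightarrow$(ii) by composing $F$ with an inner automorphism and restricting to the unique Sylow $p$-subgroups. The paper writes this last step as $f=i_m\circ F|_L$ with $F(u)=v^m$. Your extra check that the extension $F$ is bijective, obtained by also extending $f^{-1}$, supplies a detail the paper leaves implicit.
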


\begin{proof}
	{\rm (i)} and {\rm (ii)} are equivalent by definition, since if $f$ is as in {\rm (ii)} then also $f^{-1}$ is a morphism of $\DD$-pairs. By Lemma 2.3, {\rm (ii)} implies {\rm (iii)}. Moreover, {\rm (iii)} obviously implies {\rm (iv)}. To prove that {\rm (iv)} implies {\rm (ii)}, let $F: L\langle u\rangle\to M\langle v\rangle$ be a group isomorphism and $m\in M$ such that $F(u)=v^m$. Then $F|_L$ is a group isomorphism $L\to M$ between the respective unique Sylow $p$-subgroups of $L\langle u\rangle$ and $M\langle v\rangle$. Moreover, for any $l\in L$, the group isomorphism $f:=i_m\circ F|_L: L\to M$ satisfies
	\begin{align*}
	(v\circ f)(l)&=v(i_m(F(l)))=v\cdot i_m(F(l))\cdot v^{-1}=i_m\bigl(v^m\cdot F(l)\cdot (v^{-1})^m\bigr)\\&
	=i_m\bigl(F(u)\cdot F(l)\cdot F(u^{-1})\bigr)=i_m(F(u\cdot l\cdot u^{-1}))=i_m(F(u(l)))=f(u(l)),
	\end{align*}
	so that $f$ satisfies {\rm (ii)}. This completes the proof.
\end{proof}

\begin{remark}\label{rem subpairs}
	Let $(L,u)$ be a $\DD$-pair.
	
	\smallskip
	{\rm (a)} Suppose that $X\le L$ is $u$-invariant, i.e., $u(X)=X$. Then the restriction $u':=u|_X$ of $u$ to $X$ is an automorphism of $X$ of $p'$-order, so that $(X,u')$ is a $\DD$-pair. In this case we call $(X,u')$ a {\em $\DD$-subpair} of $(L,u)$. Note that the inclusion map $\iota: X\to L$ is a morphism from $(X,u')$ to $(L,u)$ in the category $\Dpaircat$.
	
	\smallskip
	{\rm (b)} Suppose that $X\unlhd L$ is $u$-invariant. Then there exists a unique element $\bar u\in \Aut(L/X)$ such that $\bar u\circ \pi=\pi\circ u$, where $\pi: L\to L/X$ denotes the canonical projection. In this case we call $(L/X,\bar u)$ a {\em quotient $\DD$-pair} of $(L,u)$. Note that the map $\pi$ is a morphism from $(L,u)$ to $(L/X,\bar u)$ in $\Dpaircat$.
\end{remark}

%%%%%%%%%%%%%%%%%%%%%%%% SECTION 3 %%%%%%%%%%%%%%%%%%%%%%%%%%%%%%

\section{$\DD$-pairs acting on sets}

In this section we define, for a $\DD$-pair $(L,u)$, the category
$\pset{(L,u)}$ of $(L,u)$-sets. Recall that $L\langle u\rangle=L\rtimes \langle u\rangle$. If $u=\id_L$, this specializes to the usual category of finite $L$-sets.

\begin{definition}\label{def (L,u)-set}
	Let $(L,u)$ be a $\DD$-pair. An \emph{$(L,u)$-set} is a finite $L$-set $\Omega$ equipped with an action of $\langle u\rangle$, denoted by $\omega\mapsto u\cdot \omega$ with the following properties:
	
	\smallskip
	{\rm (i)} If $l\in L$ and $\omega\in \Omega$, then
		\begin{align*}
			u\cdot (l\omega)=u(l)(u\cdot \omega)\,.
		\end{align*}
		
	\smallskip
	{\rm (ii)} Every $L$-orbit on $\Omega$ is invariant by $u$, that is,
		\begin{align*}
			\forall \omega\in \Omega,\ u\cdot \omega\in L\omega\,.
		\end{align*}  
\end{definition}

\begin{remark}\label{rem (L,u)-sets}
(a) Condition~{\rm (i)} is equivalent to saying that $\Omega$ is an $\Lu$-set. Thus, an $(L,u)$-set is nothing else than an $\Lu$-set with the property that its $\Lu$-orbits and $L$-orbits coincide. This implies that a disjoint union of finitely many $\Lu$-sets $\Omega_i$, $i\in I$, is an $(L,u)$-set if and only if every $\Omega_i$, $i\in I$, is an $(L,u)$-set. In particular, the $\Lu$-orbits of an $(L,u)$-set are again $(L,u)$-sets.

\smallskip
(b) If $\Omega$ is an $(L,u)$-set then, by Glauberman's Lemma (\cite{Isaacs} Lemma~3.24),
Condition~(ii) implies that each $L$-orbit contains a point fixed by~$u$ and that the $\langle u\rangle$-fixed points in an $L$-orbit form a single $L^u$-orbit. Here, we denote by $L^u$ the set of $u$-fixed points on $L$, that is 
$$L^u:=\{l\in L\mid u(l)=l\}.$$
Note that $L^u=L\cap C_{\Lu}(u)$. We will denote the set of $u$-fixed points of $\Omega$ by $\Omega^u$. Thus, the sets of $L^u$-orbits of $\Omega^u$ and $\Lu$-orbits of $\Omega$ are in canonical bijection, by mapping the $L^u$-orbits of $\omega\in \Omega^u$ to its $\Lu$-orbit (or equivalently $L$-orbit).
\end{remark}

\begin{definition}
	A \emph{morphism} of $(L,u)$-sets is just a morphism of $\Lu$-sets. This way one obtains a category $\pset{(L,u)}$ of $(L,u)$-sets, which is the full subcategory $\pset{L\langle u\rangle}$ consisting of $(L,u)$-sets. By Part~(a), disjoint unions together with their inclusions form coproducts in the category $\pset{(L,u)}$.
\end{definition}

\begin{definition}
An $(L,u)$-set $\Omega$ is called \textit{transitive} if it cannot be written as a disjoint union of two non-empty $(L,u)$-sets. By Remark~\ref{rem (L,u)-sets}(a), this just means that $\Omega$ is transitive as $\Lu$-set.
\end{definition}

The goal for the remainder of this section is to obtain a better understanding of transitive $(L,u)$-sets.

\begin{lemma}\label{lem sometransitives}
	Let $(L,u)$ be a $\DD$-pair and let $X\leq L$. Consider the ordinary
	transitive $L$-set $L/X$. 
	One obtains a well-defined action of $\langle u\rangle$ on $L/X$ by setting
	\[
	u^i\cdot lX:=u^i(l)X
	\qquad (l\in L,\ i\in\ZZ)
	\]
	if and only if $X$ is $u$-invariant. In this case this definition endows $L/X$ with a transitive $(L,u)$-set structure.
\end{lemma}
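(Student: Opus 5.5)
We prove the two directions of the equivalence separately and then verify the $(L,u)$-set axioms of Definition~\ref{def (L,u)-set}.

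\emph{Well-definedness forces invariance.} Suppose the rule $u^i\cdot lX:=u^i(l)X$ gives a well-defined map on $L/X$. Take $x\in X$, so that $xX=X$. Applying the rule with $i=1$ to both representatives gives $u(x)X=u(1)X=X$, hence $u(x)\in X$. Therefore $u(X)\subseteq X$. Since $u$ is injective and $X$ is finite, this gives $u(X)=X$.

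\emph{Invariance gives a well-defined action.} Conversely, let $X$ be $u$-invariant, so $u^i(X)=X$ for all $i\in\ZZ$. If $lX=l'X$, then $l^{-1}l'\in X$. Hence $u^i(l)^{-1}u^i(l')=u^i(l^{-1}l')\in X$, so $u^i(l)X=u^i(l')X$. The rule also does not depend on the exponent representing $u^i$: if $u^i=u^j$ as automorphisms of $L$, then $u^i(l)=u^j(l)$. The map $u^i\mapsto(lX\mapsto u^i(l)X)$ is then an action of $\langle u\rangle$. Indeed, $u^0$ acts as the identity, and $u^i\cdot(u^j\cdot lX)=u^i(u^j(l))X=u^{i+j}(l)X$.

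\emph{The axioms of Definition~\ref{def (L,u)-set}.} For condition~(i), we compute
\[
u\cdot(l'\,lX)=u(l'l)X=u(l')u(l)X=u(l')\,(u\cdot lX).
\]
For condition~(ii), note that $L/X$ is a single $L$-orbit, so every $u$-image $u\cdot lX$ automatically lies in $L\cdot lX=L/X$. This makes $L/X$ an $(L,u)$-set. It is transitive because it is already transitive as an $L$-set, and hence as an $\Lu$-set; by the remark after the definition of transitivity, this is exactly what transitivity means.

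No step here is a real obstacle. The only point needing care is the necessity direction: one has to see that well-definedness applied to representatives of the trivial coset $X$ already forces $u(X)\subseteq X$, and then use finiteness to upgrade this to equality.
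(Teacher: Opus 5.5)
Your proof is correct and follows the paper's argument essentially step for step: the trivial coset $X$ forces $u(X)\subseteq X$, invariance makes the rule independent of the coset representative, and conditions (i), (ii) and transitivity are then checked directly. Your extra remarks (using finiteness to upgrade $u(X)\subseteq X$ to equality, independence of the exponent $i$, and the action axioms) only make explicit what the paper leaves implicit.
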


\begin{proof}
	We first show that the action of $\langle u\rangle$ on $L/X$ is well-defined if and only if $X$ is $u$-invariant. Suppose first that the action of $\langle u\rangle$ is well-defined. Then for $x\in X$, we have
	\[
	u(x)X=u\cdot xX=u\cdot X=X\,.
	\]
	Thus $u(x)\in X$ and hence $u(X)=X$. Conversely, suppose that $X$ is $u$-invariant. If $lX=l'X$, then $l^{-1}l'\in X$, and so $u^i(l^{-1})u^i(l')=u^i(l^{-1}l')\in X$ for any $i\in\ZZ$. Thus,
	\[
	u^i(l)X=u^i(l')X,
	\]
	so the action of $\langle u\rangle$ is well-defined.
	
	Now, for $l_1,l_2\in L$, we have
	\[
	u\cdot(l_1l_2X)=u(l_1l_2)X=u(l_1)u(l_2)X=u(l_1)(u\cdot l_2X),
	\]
	so the $L$-action and the $\langle u\rangle$-action are compatible. Also, for any $l\in L$,
	\[
	%u\cdot (lX)=u(l)X\in L(lX)=L/X\,.
	u\cdot (lX)=u(l)X\in L/X\,.
	\]
	Therefore $L/X$ is an $(L,u)$-set. Finally, $L/X$ is clearly a transitive $(L,u)$-set.
\end{proof}

\begin{lemma}\label{lem transitivedescription}
	Every transitive $(L,u)$-set is isomorphic to $L/X$ (with its structure as in Lemma~\ref{lem sometransitives}) for some
	$u$-invariant subgroup $X$ of $L$.
\end{lemma}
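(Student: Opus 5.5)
The plan is to choose a base point fixed by $u$ and take its stabilizer in $L$ as $X$. Let $\Omega$ be a transitive $(L,u)$-set. By Remark~\ref{rem (L,u)-sets}(a), $\Omega$ is a transitive $\Lu$-set whose $\Lu$-orbit is a single $L$-orbit. Hence $L$ acts transitively on $\Omega$.

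\emph{Step 1 (base point).} By Remark~\ref{rem (L,u)-sets}(b), which relies on Glauberman's Lemma, the $L$-orbit $\Omega$ contains a point $\omega_0$ with $u\cdot\omega_0=\omega_0$. Set $X:=\stab_L(\omega_0)$.

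\emph{Step 2 ($X$ is $u$-invariant).} Let $x\in X$. Condition~(i) of Definition~\ref{def (L,u)-set} gives
\[
u(x)\omega_0=u(x)(u\cdot\omega_0)=u\cdot(x\omega_0)=u\cdot\omega_0=\omega_0,
\]
so $u(X)\subseteq X$. Since $X$ is finite and $u$ is injective, $u(X)=X$. Lemma~\ref{lem sometransitives} therefore makes $L/X$ into a transitive $(L,u)$-set.

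\emph{Step 3 (the isomorphism).} Define $\phi\colon L/X\to\Omega$ by $lX\mapsto l\omega_0$. This is the standard bijection of transitive $L$-sets, so it is well defined, bijective and $L$-equivariant. It remains to check $u$-equivariance:
\[
\phi(u\cdot lX)=\phi(u(l)X)=u(l)\omega_0=u(l)(u\cdot\omega_0)=u\cdot(l\omega_0)=u\cdot\phi(lX),
\]
again using condition~(i). Equivariance under powers $u^i$ follows by iterating this identity. Since $L$ and $u$ generate $\Lu$, the map $\phi$ is an isomorphism of $\Lu$-sets, and hence of $(L,u)$-sets.

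The only substantive point is Step 1, the existence of a $u$-fixed point in $\Omega$. This is exactly what Glauberman's Lemma supplies via Remark~\ref{rem (L,u)-sets}(b), using that $L$ is a $p$-group and $u$ has $p'$-order. With an arbitrary base point, $X$ would be replaced by a conjugate that need not be $u$-invariant, and Step 2 would fail. Everything else is routine verification.
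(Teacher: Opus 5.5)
Your proof is correct and follows the paper's argument essentially step for step. You take a $u$-fixed base point supplied by Glauberman's Lemma, show its stabilizer $X$ is $u$-invariant via condition~(i), and check $u$-equivariance of the standard bijection $lX\mapsto l\omega_0$; your remark that $u(X)\subseteq X$ forces $u(X)=X$ by finiteness also fills a small step the paper leaves implicit.
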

\begin{proof}
	Let $\Omega$ be a transitive $(L,u)$-set. Then $\Omega$ is transitive as an $L$-set. By Glauberman's Lemma $\Omega$ contains a point $\omega$ fixed by $u$. Set 
	\[
	X=\mathrm{Stab}_L(\omega).
	\]
	Then $X$ is $u$-invariant. Indeed, if $x\in X$, then
	\[
	u(x)\omega=u(x)(u\cdot\omega)=u\cdot(x\omega)=u\cdot\omega=\omega\,.
	\]
	Hence $u(x)\in X$.
	
	Now, the usual $L$-set isomorphism
	\[
	f:L/X\longrightarrow \Omega,\qquad lX\mapsto l\omega
	\]
	is also $\langle u\rangle$-equivariant, since
	\[
	f(u\cdot (lX))=f(u(l)X)=u(l)f(X)=u(l)\omega=u(l)(u\cdot\omega)=u\cdot (l\omega)=u\cdot f(lX)\,.
	\]
	Hence $\Omega\cong L/X$ as $(L,u)$-sets.
\end{proof}

\begin{lemma}\label{lem isomorphictransitives}
	Let $X,Y\leq L$ be $u$-invariant subgroups. Then $L/X$ and $L/Y$ are
	isomorphic as $(L,u)$-sets if and only if there exists $a\in L^u$ such that
	\[
	X=aYa^{-1}.
	\]
\end{lemma}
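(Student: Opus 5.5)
We prove the two implications separately.

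For the ``if'' direction, suppose $a\in L^u$ satisfies $X=aYa^{-1}$. The natural candidate is the usual $L$-set isomorphism
\[
\phi\colon L/X\to L/Y,\qquad lX\mapsto laY.
\]
It is well defined and bijective because $Xa=aY$. It remains to check that $\phi$ is $\langle u\rangle$-equivariant:
\[
\phi(u\cdot lX)=u(l)aY=u(l)u(a)Y=u(la)Y=u\cdot(laY),
\]
where the middle equality uses $u(a)=a$. Hence $\phi$ is an isomorphism of $\Lu$-sets, and so of $(L,u)$-sets.

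For the ``only if'' direction, let $\phi\colon L/X\to L/Y$ be an isomorphism of $(L,u)$-sets. The point $X\in L/X$ is fixed by $u$, since $u\cdot X=u(1)X=X$. Its image $\phi(X)=bY$ is therefore a $u$-fixed point of $L/Y$ whose $L$-stabilizer is $X$, because $\phi$ is an $L$-isomorphism. The $L$-stabilizer of $bY$ is $bYb^{-1}$, so $X=bYb^{-1}$. However, $b$ need not lie in $L^u$, and this is the main issue.

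To fix this, note that $Y$ itself is also a $u$-fixed point of $L/Y$. The set $L/Y$ is a single $L$-orbit satisfying condition (ii), so by Glauberman's Lemma, as recorded in Remark~\ref{rem (L,u)-sets}(b), its $u$-fixed points form a single $L^u$-orbit. Since both $bY$ and $Y$ are $u$-fixed, there exists $a\in L^u$ with $bY=aY$. Then $X=\mathrm{Stab}_L(bY)=\mathrm{Stab}_L(aY)=aYa^{-1}$, as required.

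The only nontrivial input is this appeal to Glauberman's Lemma to replace $b$ by an element of $L^u$. It applies because $u$ has $p'$-order and $L$ is a $p$-group, so the coprimality hypothesis holds.
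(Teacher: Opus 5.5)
Your proof is correct and follows the paper's argument: the explicit $u$-equivariant map $lX\mapsto laY$ for the ``if'' direction. For the converse, you compare the stabilizers of $X$ and $\phi(X)=bY$, then use Glauberman's Lemma on the $u$-fixed points of $L/Y$ to replace $b$ by some $a\in L^u$ with $bY=aY$.
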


\begin{proof}
	If $a\in L^u$ with $X=aYa^{-1}$, then the map
	\[
	f:L/X\longrightarrow L/Y,\qquad lX\mapsto laY
	\]
	is an isomorphism of $L$-sets. It is also compatible with the action of $u$. Indeed, since $u(a)=a$, we have
	\[
	f(u\cdot (lX))=f(u(l)X)=u(l)aY=u(l)u(a)Y=u(la)Y=u\cdot laY=u\cdot f(lX)\,.
	\]
	Conversely, suppose that
	\[
	\varphi: L/X\longrightarrow L/Y
	\]
	is an isomorphism of $(L,u)$-sets. In particular, $\varphi$ is an isomorphism of transitive $L$-sets. Hence there exists $b\in L$ such that
	\[
	X=bYb^{-1}\qquad\text{and}\quad \varphi(lX)=lbY\,.
	\]
	Since $\varphi$ is $\langle u\rangle$-equivariant and the element $X\in L/X$ is $u$-fixed, it follows that $bY=\varphi(X)$ is also $u$-fixed. The element $Y\in L/Y$ is also $u$-fixed. By Glauberman's Lemma, the $u$-fixed elements of the transitive $L$-set $L/Y$ form a transitive $L^u$-set. Hence there exists $a\in L^u$ such that
	\[
	bY=aY\,.
	\]
	Therefore
	\[
	X=bYb^{-1}=aYa^{-1}.
	\]
	This proves the claim.
\end{proof}
\begin{corollary}\label{cor isomclassesoftransitive}
	The isomorphism classes of transitive $(L,u)$-sets are in bijection with
	the $L^u$-conjugacy classes of $u$-invariant subgroups of $L$.
\end{corollary}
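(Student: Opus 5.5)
The plan is to build the bijection explicitly from the two preceding lemmas. Let $\mathcal{S}_u$ be the set of $u$-invariant subgroups of $L$. The group $L^u$ acts on $\mathcal{S}_u$ by conjugation. To see this, take $a\in L^u$ and $Y\in\mathcal{S}_u$. Then
\[
u(aYa^{-1})=u(a)u(Y)u(a)^{-1}=aYa^{-1},
\]
so $aYa^{-1}$ is again $u$-invariant, and the $L^u$-conjugacy classes in $\mathcal{S}_u$ make sense. I will then define a map $\Phi$ from the set of $L^u$-conjugacy classes $[X]$ with $X\in\mathcal{S}_u$ to the set of isomorphism classes of transitive $(L,u)$-sets, by
\[
\Phi([X]) = [L/X],
\]
where $L/X$ carries the $(L,u)$-set structure of Lemma~\ref{lem sometransitives}.

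The steps are as follows.
\begin{enumerate}
\item By Lemma~\ref{lem sometransitives}, $L/X$ is indeed a transitive $(L,u)$-set whenever $X\in\mathcal{S}_u$.
\item $\Phi$ is well defined and injective. This comes from Lemma~\ref{lem isomorphictransitives}, read in both directions. If $X=aYa^{-1}$ with $a\in L^u$, then $L/X\cong L/Y$ as $(L,u)$-sets. Conversely, an isomorphism $L/X\cong L/Y$ forces $X$ and $Y$ to be $L^u$-conjugate.
\item $\Phi$ is surjective. This is exactly Lemma~\ref{lem transitivedescription}: every transitive $(L,u)$-set is isomorphic to some $L/X$ with $X$ a $u$-invariant subgroup.
\end{enumerate}

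There is no real obstacle here, since the substantive input is already contained in Lemmas~\ref{lem transitivedescription} and~\ref{lem isomorphictransitives}, through Glauberman's Lemma. The only point that needs separate attention is the stability of $\mathcal{S}_u$ under $L^u$-conjugation checked above. Without it, the phrase ``$L^u$-conjugacy classes of $u$-invariant subgroups'' would not be meaningful.
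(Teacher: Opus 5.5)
Your proposal is correct and matches the paper's proof, which likewise uses Lemma~\ref{lem transitivedescription} for surjectivity and both directions of Lemma~\ref{lem isomorphictransitives} for well-definedness and injectivity. Your check that $L^u$-conjugation preserves $u$-invariance is a harmless addition that the paper leaves implicit.
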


\begin{proof}
	By Lemma~\ref{lem transitivedescription}, every transitive $(L,u)$-set is isomorphic to $L/X$ for some $u$-invariant subgroup $X\leq L$, and by Lemma~\ref{lem isomorphictransitives} two such transitive $(L,u)$-sets $L/X$ and $L/Y$ are isomorphic if and only if $X$ and $Y$ are conjugate by an element of $L^u$.
\end{proof}

%%%%%%%%%%%%%%% SECTION 4 %%%%%%%%%%%%%%%%%%%%%%%%%%%%%%%

\section{The $((M,v),(L,u))$-bisets}\label{sec Dpair bisets}

\begin{definition}
	Let $(L,u)$ and $(M,v)$ be $\DD$-pairs. An $((M,v),(L,u))\text{-biset}$
	is a finite $(M,L)$-biset $\Omega$ equipped with an action of $\langle (v,u)\rangle$, denoted by
	\[
	\omega\mapsto (v,u)\cdot \omega,
	\]
	with the following properties:
	
	\smallskip
	{\rm (i)} If $m\in M$, $l\in L$, and $\omega\in \Omega$, then
		\[
		(v,u)\cdot (m\omega l)=v(m)\bigl((v,u)\cdot \omega\bigr)u(l).
		\]
	
	\smallskip
	{\rm (ii)} Every $(M,L)$-orbit on $\Omega$ is invariant under $(v,u)$, that is,
		\[
		\forall \omega\in \Omega,\qquad (v,u)\cdot \omega\in M\omega L.
		\]
\end{definition}

\begin{remark}\label{rem DDelta bisets}
With the usual way of translating a right action to a left action, an $((M,v),(L,u))$-biset is the same thing as an $(M\times L, (v, u))$-set, as in Definition~\ref{def (L,u)-set}, where $(v,u)$ denotes the automorphism of $M\times L$ given by $(m,l)\mapsto (v(m),u(l))$. Equivalently, an $((M,v),(L,u))$-biset is an $(M\times L)\rtimes \langle (v,u)\rangle$-set with the property that every $M\times L$-orbit is invariant under $(v,u)$.  We therefore have a category
\begin{equation*}
   \lbiset{(M,v)}{(L,u)}:=\pset{(M\times L, (v,u))}
\end{equation*}
of $((M,v),(L,u))$-bisets, which can be considered as the full subcategory of $(M\times L)\langle(v,u)\rangle$-sets whose objects satisfy that  
their $M\times L$-orbits are invariant under $(v,u)$, or equivalently that their $(M\times L)$-orbits and $(M\times L)\langle (v,u)\rangle$-orbits coincide (see Remark~\ref{rem (L,u)-sets}(a)). 

\smallskip
Glauberman's Lemma applies in the same way as in Remark~\ref{rem (L,u)-sets}(b) with
$(M\times L)^{(v,u)}=M^v\times L^u$.
\end{remark}

\begin{remark}\label{Un 1}
Let $(\un,1)$ denote the {\em trivial} $\DD$-pair, consisting of the trivial $p$-group $\un$, and its unique (identity) automorphism. Then, for any $\DD$-pair $(L,u)$, the $\big((L,u),(\un,1)\big)$-bisets identify trivially with the $(L,u)$-sets. This identification is in fact an isomorphism of categories between $\lbiset{(L,u)}{(\un,1)}$ and $\pset{(L,u)}$.
\end{remark}

The following
Proposition follows immediately from Lemma~\ref{lem sometransitives}, Lemma~\ref{lem isomorphictransitives},
and Corollary~\ref{cor isomclassesoftransitive}.

\begin{proposition}\label{prop transitivebisets}
Let $(L,u)$ and $(M,v)$ be $\DD$-pairs. Up to isomorphism, the transitive $((M,v),(L,u))$-bisets are precisely the transitive $(M\times L)\langle(v,u)\rangle$-sets of the form $(M\times L)/X$, where $X\le M\times L$ is $(v,u)$-invariant. 

If $X$ and $Y$ are $(v,u)$-invariant subgroups of $M\times L$ then $(M\times L)/X$ and $(M\times L)/Y$ are isomorphic as $((M,v),(L,u))$-bisets if and only if $X$ and $Y$ are conjugate by an element in $M^v\times L^u$. In particular, the isomorphism classes of transitive $((M,v),(L,u))$-bisets are in bijection with the $(M^v\times L^u)$-conjugacy classes of $(v,u)$-invariant subgroups of $M\times L$. 
\end{proposition}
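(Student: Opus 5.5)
The plan is to reduce everything to the case of a single $\DD$-pair via Remark~\ref{rem DDelta bisets}. That remark identifies the category $\lbiset{(M,v)}{(L,u)}$ with $\pset{(M\times L,(v,u))}$. The pair $(M\times L,(v,u))$ is itself a $\DD$-pair: $M\times L$ is a finite $p$-group, and the order of $(v,u)$ is the least common multiple of the orders of $v$ and $u$, hence prime to $p$. This is the product of Remark 2.2(b). Once the identification is in place, each assertion of the proposition becomes the corresponding assertion for $(M\times L,(v,u))$-sets, and Section 3 applies verbatim.

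The steps are as follows.

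First, I will note that the identification preserves transitivity and isomorphism. Under the conversion $(m,l)\cdot\omega=m\omega l^{-1}$, the $(M,L)$-orbits become $M\times L$-orbits. Morphisms of bisets correspond exactly to morphisms of $(M\times L)\langle(v,u)\rangle$-sets. So transitive bisets correspond to transitive $(M\times L,(v,u))$-sets, and isomorphic bisets correspond to isomorphic sets.

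Second, I will apply Lemma~\ref{lem sometransitives} to the $\DD$-pair $(M\times L,(v,u))$. It shows that for each $(v,u)$-invariant $X\le M\times L$, the set $(M\times L)/X$ is a transitive $(M\times L,(v,u))$-set. By Lemma~\ref{lem transitivedescription}, every transitive one is isomorphic to such a quotient. Together these give the first paragraph of the proposition.

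Third, I will apply Lemma~\ref{lem isomorphictransitives} to the same $\DD$-pair. It says $(M\times L)/X\cong(M\times L)/Y$ if and only if $X$ and $Y$ are conjugate by an element of $(M\times L)^{(v,u)}$. It remains to check that $(M\times L)^{(v,u)}=M^v\times L^u$. This holds because $(v,u)(m,l)=(m,l)$ exactly when $v(m)=m$ and $u(l)=l$. The final bijection is then Corollary~\ref{cor isomclassesoftransitive} for this $\DD$-pair.

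There is no genuine obstacle. The only point needing care is the bookkeeping in the first step: the $\langle(v,u)\rangle$-action axiom (i) for bisets must translate into axiom (i) of Definition~\ref{def (L,u)-set} for the left $M\times L$-action. Writing $(m,l)\omega=m\omega l^{-1}$, we get
\[
(v,u)\cdot(m\omega l^{-1})=v(m)\bigl((v,u)\cdot\omega\bigr)u(l)^{-1},
\]
which is exactly $(v,u)\cdot((m,l)\omega)=(v,u)(m,l)\bigl((v,u)\cdot\omega\bigr)$. Axiom (ii) translates directly, since the two notions of orbit coincide.
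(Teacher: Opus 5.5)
Your proposal is correct and is the paper's own argument. The paper obtains the proposition immediately from the identification in Remark~\ref{rem DDelta bisets} of $((M,v),(L,u))$-bisets with $(M\times L,(v,u))$-sets (noting $(M\times L)^{(v,u)}=M^v\times L^u$), combined with Lemma~\ref{lem sometransitives}, Lemma~\ref{lem isomorphictransitives} and Corollary~\ref{cor isomclassesoftransitive}; your extra checks (the $p'$-order of $(v,u)$, the translation of the axioms via $(m,l)\omega=m\omega l^{-1}$, and citing Lemma~\ref{lem transitivedescription}) make explicit what the paper leaves implicit.
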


 In the following definition we use the {\em composition} of two bisets. See \cite[Definition~2.3.11]{Bouc2010a} for the definition of this construction and related notations.

\begin{definition}\label{composition}
	Let $(L,u)$, $(M,v)$ and $(N,w)$ be $\DD$-pairs. Let $\Omega$ be an
	$((M,v),(L,u))$-biset and let $\Gamma$ be an $((N,w),(M,v))$-biset.	We denote by
	\[
	\Omega^{(v,u)}=\{\omega\in \Omega\mid (v,u)\cdot \omega=\omega\}
	\]
	the set of $\langle(v,u)\rangle$-fixed points on $\Omega$. Then $\Omega^{(v,u)}$ is naturally an $(M^v,L^u)$-biset. Indeed, for $(m,l)\in M^v\times L^u$ and $\omega\in \Omega^{(v,u)}$, one has
	\[
	(v,u)\cdot (m\omega l)=v(m)((v,u)\cdot \omega)u(l)=m\omega l\,.
	\]
	Similarly, $\Gamma^{(w,v)}$ is naturally an $(N^w,M^v)$-biset. Hence we may form the biset product
	\[
	\Gamma^{(w,v)}\times_{M^v} \Omega^{(v,u)}.
	\]
	There is a natural map
	\[
	\theta\colon
	\Gamma^{(w,v)}\times_{M^v} \Omega^{(v,u)}\longrightarrow \Gamma \times_M \Omega, \qquad (\gamma,{}_{M^v}\omega)\mapsto (\gamma,{}_M \omega).
	\]
	We define the $(N,L)$-biset $\Gamma\times_{M,v}\Omega$ as the %\textit{composition} of $Y$ and $X$ to be the 
	$(N,L)$-subbiset
	\[
	\Gamma\times_{M,v}\Omega:=N\cdot \operatorname{Im}(\theta)\cdot L \subseteq \Gamma\times_M \Omega
	\]
	of $\Gamma\times_M \Omega$. Equivalently, $\Gamma\times_{M,v}\Omega$ is the $(N,L)$-subbiset of $\Gamma\times_M \Omega$
	generated by the elements $(\gamma,{}_M \omega)$ with
	\[
	\gamma\in \Gamma^{(w,v)} \qquad\text{and}\qquad \omega\in \Omega^{(v,u)}.
	\]
\end{definition}

\begin{lemma}\label{composition is functorial}
	With the notation from the previous definition, the $(N,L)$-biset $\Gamma\times_{M,v}\Omega$ is an
	$((N,w),(L,u))$-biset with the $\langle(w,u)\rangle$-action given by
	$$(w,u)\cdot (\gamma,{}_M \omega)=((w,v)\cdot \gamma,{}_M (v,u)\cdot \omega)\,,$$
	for any element $(\gamma,_M \omega)\in \Gamma\times_{M,v}\Omega$. Moreover, this defines a functor
	$$\lbiset{(N,w)}{(M,v)}\times\lbiset{(M,v)}{(L,u)}\to \lbiset{(N,w)}{(L,u)}\,, \quad (\Gamma,\Omega)\mapsto \Gamma\times_{M,v} \Omega\,,$$
	which preserves coproducts in both arguments.
\end{lemma}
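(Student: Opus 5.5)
The plan is to first define the $\langle(w,u)\rangle$-action on all of $\Gamma\times_M\Omega$, then show that $\Gamma\times_{M,v}\Omega$ is stable under it, then check the two axioms, and finally handle functoriality and coproducts.

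\emph{Step 1: the action on $\Gamma\times_M\Omega$.} I would set $(w,u)\cdot(\gamma,{}_M\omega):=((w,v)\cdot\gamma,{}_M(v,u)\cdot\omega)$ for arbitrary $\gamma\in\Gamma$, $\omega\in\Omega$. This is well defined: replacing $(\gamma,\omega)$ by $(\gamma m,m^{-1}\omega)$ gives $((w,v)\gamma\,v(m),\,v(m)^{-1}(v,u)\omega)$ by axiom (i) for $\Gamma$ and $\Omega$, which is the same class. The map is invertible, and it defines an action of the cyclic group $\langle(w,u)\rangle$. The order of $(w,u)$ is $\lcm(|w|,|u|)$, while the pair $((w,v),(v,u))$ involves $v$, so some care is needed. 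I would argue that the bijection $\sigma$ on $\Gamma\times_M\Omega$ induced by the pair of actions has order dividing a $p'$-number $n$. We then need $\sigma^{e}=\id$ whenever $w^e=\id$ and $u^e=\id$. This is the main obstacle, since $v^e$ need not be trivial.

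I would resolve it by restricting to $\Gamma\times_{M,v}\Omega$. On a generator $(\gamma,{}_M\omega)$ with $\gamma,\omega$ fixed, $\sigma$ acts trivially. Axiom (i) gives $\sigma(n(\gamma,\omega)l)=w(n)\,\sigma(\gamma,\omega)\,u(l)$, so on $N\cdot\mathrm{Im}(\theta)\cdot L$ we get $\sigma(n x l)=w(n)\,x\,u(l)$ for generators $x$. Hence $\sigma^e(nxl)=w^e(n)\,x\,u^e(l)=nxl$. So on the subbiset the action factors through $\langle(w,u)\rangle$, and it is well defined because it is the restriction of $\sigma$. This also shows that $\Gamma\times_{M,v}\Omega$ is $\sigma$-stable.

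\emph{Step 2: the axioms.} Axiom (i) for $\sigma$ is the computation just given, valid for all elements. Axiom (ii) is immediate from Step 1: every element has the form $nxl$ with $x$ a fixed generator, and $\sigma(nxl)=w(n)\,x\,u(l)$ lies in $N(nxl)L$. Thus $\Gamma\times_{M,v}\Omega$ is an $((N,w),(L,u))$-biset. The same formula shows that the given expression $((w,v)\gamma,{}_M(v,u)\omega)$ agrees with $\sigma$ for arbitrary representatives.

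\emph{Step 3: functoriality and coproducts.} Given morphisms $\alpha\colon\Gamma\to\Gamma'$ and $\beta\colon\Omega\to\Omega'$, these are equivariant, so they map fixed points to fixed points. The induced map $\alpha\times_M\beta$ on $\Gamma\times_M\Omega$ therefore sends generators to generators, hence the subbisets into each other, and it commutes with $\sigma$. Identities and composition are preserved because they are preserved for $\times_M$. For coproducts, I would use two facts. First, $(\Gamma_1\sqcup\Gamma_2)^{(w,v)}=\Gamma_1^{(w,v)}\sqcup\Gamma_2^{(w,v)}$. Second, $\times_M$ distributes over disjoint unions. Together these give $(\Gamma_1\sqcup\Gamma_2)\times_{M,v}\Omega=\Gamma_1\times_{M,v}\Omega\sqcup\Gamma_2\times_{M,v}\Omega$ compatibly with the inclusions, and the same holds in the second argument.
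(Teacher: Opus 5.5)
Your proposal is correct and follows essentially the same route as the paper: define $\sigma$ on $\Gamma\times_M\Omega$, observe that it fixes the generators and satisfies $\sigma(nxl)=w(n)\,x\,u(l)$, deduce that $\Gamma\times_{M,v}\Omega$ is $\sigma$-stable with $\sigma^e=\id$ there whenever $w^e=1=u^e$, then check the two axioms, functoriality via fixed points, and coproducts. The only blemish is your opening sentence claiming that $\sigma$ already defines a $\langle(w,u)\rangle$-action on all of $\Gamma\times_M\Omega$; as you note yourself, this fails in general because $v^e$ need not be trivial, so that sentence should be deleted.
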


\begin{proof}
First we observe that the function
\begin{equation*}
   \sigma\colon \Gamma\times_M \Omega \to \Gamma\times_M \Omega\,, \quad 
   (\gamma,_M \omega)\mapsto \bigl((w,v)\cdot \gamma,_M (v,u)\cdot \omega\bigr)\,,
\end{equation*}
 is well defined. Indeed, for $m\in M$, one has 
\begin{align*} 
   ((w,v)\cdot(\gamma m),{}_M (v,u)\cdot(m^{-1}\omega)) & = (((w,v)\cdot \gamma)v(m), {}_M v(m^{-1})((v,u)\cdot \omega))\\
   & = ((w,v)\cdot \gamma,{}_M (v,u)\cdot \omega)\,.
 \end{align*}
 Moreover, 
 if $\gamma\in \Gamma^{(w,v)}$ and $\omega\in \Omega^{(v,u)}$, then
	\[	\sigma(\gamma,{}_M \omega)=(\gamma,{}_M \omega)\,.
	\]
	Thus, every element of $\operatorname{Im}(\theta)$ is fixed by $\sigma$. It follows that the $(N,L)$-subbiset generated by $\operatorname{Im}(\theta)$ is stable under $\sigma$, because
\begin{align}\label{eqn sigma}
   \sigma (n\gamma,_M \omega l) & = \bigl( (w,v)\cdot (n\gamma),_M (v,u)\cdot (\omega l) \bigr) = \bigl( w(n) ((w,v)\cdot \gamma)),_M ((v,u)\cdot \omega) u(l)\bigr)\\
   \notag & = \bigl( w(n) \gamma,_M \omega u(l)\bigr) \,,
\end{align}
for any $n\in N$, $l\in L$, $\gamma\in \Gamma^{(w,v)}$ and $\omega\in \Omega^{(v,u)}$.
Therefore, $\sigma$ restricts to a map $\Gamma\times_{M,v}\Omega\to \Gamma\times_{M,v} \Omega$. Moreover, if $(w,u)^i=1$ then the last computation shows that $\sigma^i(\gamma,_M\omega)= (\gamma,_M\omega)$ for any $(\gamma,_M\omega)\in \Gamma\times_{M,v} \Omega$.
Thus, setting
\begin{equation*}
   (w,u)\cdot (\gamma,_M \omega):= \sigma(\gamma,_M \omega) = \bigl( (w,v)\cdot \gamma,_M (v,u)\cdot \omega\bigr)\,,
\end{equation*}
for $(\gamma,_M \omega)\in \Gamma\times_{M,v} \Omega$, defines an action of $\langle (w,u)\rangle$ on $\Gamma\times_{M,v} \Omega$. Equation~(\ref{eqn sigma}) also shows that 
\begin{equation*}
   (w,u)\cdot \bigl(n(\gamma,_M \omega)l\bigr) = w(n)(\gamma,_M \omega)u(l)\,,
\end{equation*}
for all $(\gamma,_M \omega)\in \Gamma\times_{M,v} \Omega$ and $(n,l)\in N\times L$,
so that $\Gamma\times_{M,v} \Omega$  becomes an $(N\times L)\rtimes\langle(w,u)\rangle$-set.
Clearly, every $(N,L)$-orbit in $\Gamma\times_{M,v}\Omega$ contains an element of $\operatorname{Im}(\theta)$, hence contains an element fixed by $(w,u)$. Therefore, every $(N,L)$-orbit is invariant under $(w,u)$ and $\Gamma\times_{M,v}\Omega$ is an $((N,w),(L,u))$-biset.
	
If
        \[
	\beta\colon \Gamma\to \Gamma'
	\]
	is a morphism of $((N,w),(M,v))$-bisets and
	\[
	\alpha\colon \Omega\to \Omega'
	\]
	is a morphism of $((M,v),(L,u))$-bisets, then the induced map
\[
\beta\times_M\alpha\colon \Gamma\times_M \Omega\longrightarrow \Gamma'\times_M \Omega', \qquad (\gamma,{}_M \omega)\mapsto (\beta(\gamma),{}_M \alpha(\omega)),
\]
restricts to a map
\[
\Gamma\times_{M,v}\Omega\longrightarrow \Gamma'\times_{M,v}\Omega'.
\]
Indeed, $\beta$ sends $\Gamma^{(w,v)}$ into $(\Gamma')^{(w,v)}$, and $\alpha$ sends
$\Omega^{(v,u)}$ into $(\Omega')^{(v,u)}$. Thus the image of the generating subset of
$\Gamma\times_{M,v}\Omega$ is contained in the generating subset of
$\Gamma'\times_{M,v}\Omega'$. Hence composition is functorial in both variables.
Finally, it is straightforward to show that this construction preserves coproducts in both arguments.
\end{proof}

We call the above $((N,w),(L,u))$-biset $\Gamma\times_{M,v} \Omega$ the {\em composition} of the $((N,w),(M,v))$-biset $\Gamma$ and the $((M,v),(L,u))$-biset $\Omega$.

\begin{proposition}\label{composition is associative}
	Let $(K,t)$, $(N,w)$, $(M,v)$ and $(L,u)$ be $\DD$-pairs. Let $\Theta$ be a $((K,t),(N,w))$-biset, let $\Gamma$ be a $((N,w),(M,v))$-biset, and let $\Omega$ be a $((M,v),(L,u))$-biset. Then there is a natural isomorphism of
	$((K,t),(L,u))$-bisets
	\[
	(\Theta\times_{N,w}\Gamma)\times_{M,v}\Omega \cong \Theta\times_{N,w}(\Gamma\times_{M,v}\Omega).
	\]
\end{proposition}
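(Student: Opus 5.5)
The plan is to realize both sides inside the ordinary triple product $\Theta\times_N\Gamma\times_M\Omega$ and show that they are the same subbiset there. Recall that for ordinary bisets there is the standard natural isomorphism of $(K,L)$-bisets
\[
\alpha\colon (\Theta\times_N\Gamma)\times_M\Omega\to\Theta\times_N(\Gamma\times_M\Omega),\qquad ((\vartheta,{}_N\gamma),{}_M\omega)\mapsto(\vartheta,{}_N(\gamma,{}_M\omega)).
\]
By construction, $\Theta\times_{N,w}\Gamma$ is a subbiset of $\Theta\times_N\Gamma$. Since $\times_M\Omega$ preserves disjoint unions, and in particular subbisets (inclusions of unions of orbits), we get an embedding
\[
(\Theta\times_{N,w}\Gamma)\times_M\Omega\hookrightarrow(\Theta\times_N\Gamma)\times_M\Omega.
\]
Hence the left side of the claimed isomorphism is identified with a $(K,L)$-subbiset $A$ of $(\Theta\times_N\Gamma)\times_M\Omega$. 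In the same way, the right side is identified with a subbiset $B$ of $\Theta\times_N(\Gamma\times_M\Omega)$. The key claim is then $\alpha(A)=B$. Since $\alpha$ commutes with the $\sigma$-type maps defining the $\langle(t,u)\rangle$-actions on both sides (each acts componentwise by $(t,w),(w,v),(v,u)$), the restriction of $\alpha$ is automatically an isomorphism of $((K,t),(L,u))$-bisets. Naturality in $\Theta,\Gamma,\Omega$ is inherited from that of $\alpha$.

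To compare $A$ and $B$, I would describe both by generators. Let $S$ be the $(K,L)$-subbiset of the triple product generated by all elements $(\vartheta,\gamma,\omega)$ with $\vartheta\in\Theta^{(t,w)}$, $\gamma\in\Gamma^{(w,v)}$ and $\omega\in\Omega^{(v,u)}$. Clearly $S\subseteq A$ and $S\subseteq B$. The main step is the reverse inclusion $A\subseteq S$ (then $B\subseteq S$ follows symmetrically).

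By definition, $A$ is generated as a $(K,L)$-biset by elements $(x,{}_M\omega)$ with $\omega\in\Omega^{(v,u)}$ and $x$ a $(t,v)$-fixed point of $\Theta\times_{N,w}\Gamma$. So the crux is the following description:
\[
(\Theta\times_{N,w}\Gamma)^{(t,v)}=K^t\cdot\{(\vartheta,{}_N\gamma)\mid \vartheta\in\Theta^{(t,w)},\ \gamma\in\Gamma^{(w,v)}\}\cdot M^v.
\]
I would prove this with Glauberman's Lemma (Remark~\ref{rem DDelta bisets}) applied to the $((K,t),(M,v))$-biset $\Theta\times_{N,w}\Gamma$ from Lemma~\ref{composition is functorial}. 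Each $(K,M)$-orbit of it contains a generator $(\vartheta,{}_N\gamma)$ with $\vartheta,\gamma$ fixed, and such a generator is $(t,v)$-fixed. Glauberman's Lemma says the $(t,v)$-fixed points of that orbit form a single $K^t\times M^v$-orbit. Hence every fixed point $x$ has the form $k(\vartheta,{}_N\gamma)m$ with $k\in K^t$ and $m\in M^v$.

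Given this, for a generator of $A$ we compute
\[
(k(\vartheta,{}_N\gamma)m,{}_M\omega)=k\,(\vartheta,\gamma,m\omega),
\]
and $m\omega\in\Omega^{(v,u)}$ because $m\in M^v$ and $\omega$ is $(v,u)$-fixed. So every generator of $A$ lies in $S$, and therefore $A\subseteq S$. The symmetric argument applies to $(\Gamma\times_{M,v}\Omega)^{(w,u)}$ and gives $B\subseteq S$.

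The main obstacle is the Glauberman step. It needs the orbit decomposition of $\Theta\times_{N,w}\Gamma$ as a $((K,t),(M,v))$-biset, and it needs the check that the subbiset embedding is compatible with $\alpha$. Both follow from Lemma~\ref{composition is functorial} and Remark~\ref{rem (L,u)-sets}(b).
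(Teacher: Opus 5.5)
Your proposal is correct and is essentially the paper's proof. Both restrict the ordinary associativity isomorphism $(\Theta\times_N\Gamma)\times_M\Omega\cong\Theta\times_N(\Gamma\times_M\Omega)$, using Glauberman's Lemma to obtain $(\Theta\times_{N,w}\Gamma)^{(t,v)}=K^t\,S_{\Theta\Gamma}\,M^v$, so that each side becomes the $(K,L)$-subbiset generated by triples of fixed points; your explicit justification of the embedding $(\Theta\times_{N,w}\Gamma)\times_M\Omega\hookrightarrow(\Theta\times_N\Gamma)\times_M\Omega$ via preservation of disjoint unions fills in a step the paper uses tacitly.
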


\begin{proof}
	We claim that the natural associativity isomorphism 
\begin{equation*}
   (\Theta\times_N \Gamma)\times_M \Omega\cong \Theta\times_N (\Gamma\times_M \Omega)\,,\quad (\theta,_N \gamma),_M \omega) 
   \mapsto (\theta,_N(\gamma,_M\omega))\,,
\end{equation*}
restricts to an isomorphism between
	\[
	(\Theta\times_{N,w}\Gamma)\times_{M,v}\Omega\qquad\text{and}\qquad \Theta\times_{N,w}(\Gamma\times_{M,v}\Omega)
	\]
	which is also $\langle(t,u)\rangle$-equivariant.
	This will prove the result. 
	
Let $S_{\Theta\Gamma}$ denote the image of
	\[
	\Theta^{(t,w)}\times_{N^w}\Gamma^{(w,v)}
	\longrightarrow
	\Theta \times_N\Gamma.
	\]
	By definition,
	\[
	\Theta\times_{N,w}\Gamma=K S_{\Theta\Gamma} M.
	\]
	We need to understand the $(t,v)$-fixed points of this composite. Since $\Theta\times_{N,w}\Gamma$ is generated, as a $(K,M)$-biset, by the $(t,v)$-fixed set $S_{\Theta\Gamma}$, every $(K,M)$-orbit in $\Theta\times_{N,w}\Gamma$ contains a $(t,v)$-fixed point. By Glauberman's Lemma, the fixed points in such an orbit form a single orbit under $K^t\times M^v$. Hence,
	\[
	(\Theta\times_{N,w}\Gamma)^{(t,v)}=K^t S_{\Theta\Gamma} M^v.
	\]
	Therefore
	\[
	(\Theta\times_{N,w}\Gamma)\times_{M,v}\Omega
	\]
	is the $(K,L)$-subbiset of $(\Theta\times_N\Gamma)\times_M\Omega$ generated by the image of
	\[
	(K^t S_{\Theta\Gamma} M^v)\times_{M^v}\Omega^{(v,u)}.
	\]
	This subbiset is generated by the elements 
	$((\theta,{}_N \gamma),{}_M \omega)$
	with
	\[
	\theta\in \Theta^{(t,w)},\qquad \gamma\in \Gamma^{(w,v)},\qquad \omega\in \Omega^{(v,u)}.
	\]
	
	The same argument on the other side gives
	\[
	(\Gamma\times_{M,v}\Omega)^{(w,u)}=N^w S_{\Gamma\Omega} L^u,
	\]
	where $S_{\Gamma\Omega}$ is the image of
	\[
	\Gamma^{(w,v)}\times_{M^v}\Omega^{(v,u)} \longrightarrow \Gamma\times_M\Omega.
	\]
	Hence,
	\[
	\Theta\times_{N,w}(\Gamma\times_{M,v}\Omega)
	\]
	is also the $(K,L)$-subbiset of 
	$\Theta\times_N(\Gamma\times_M\Omega)$ 
	generated by the elements
	$(\theta,{}_N (\gamma,{}_M \omega))$
	with $\theta$, $\gamma$, and $\omega$ fixed by $(t,w)$, $(w,v)$, and $(v,u)$ respectively.
	
	Thus, the usual associativity isomorphism of biset compositions restricts to an isomorphism
	\[
	(\Theta\times_{N,w}\Gamma)\times_{M,v}\Omega \cong \Theta\times_{N,w}(\Gamma\times_{M,v}\Omega),
	\]
	and this isomorphism is compatible with the action of $\langle(t,u)\rangle$. Therefore it is an isomorphism of $((K,t),(L,u))$-bisets.
\end{proof}

\begin{proposition}\label{identity}
	For each $\DD$-pair $(L,u)$, the regular $(L,L)$-biset $L$, equipped with the action of
	$\langle(u,u)\rangle$ given by
	\[
	(u^i,u^i)\cdot l=u^i(l),
	\]
	is an identity element for the composition of $\DD$-pair bisets.
\end{proposition}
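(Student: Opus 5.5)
We first check that the regular biset, with the stated action, is an $((L,u),(L,u))$-biset. The rule $(u^i,u^i)\cdot l=u^i(l)$ is well defined because $u$ has finite order, and it gives an action of $\langle(u,u)\rangle$. Condition~(i) holds because
\[
(u,u)\cdot(l_1 l l_2)=u(l_1)u(l)u(l_2).
\]
Condition~(ii) holds trivially, since $L$ is a single $(L,L)$-orbit. Note also that $L^{(u,u)}=L^u$, and that $1\in L^u$.

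Now let $\Omega$ be an $((M,v),(L,u))$-biset. We show that $\Omega\times_{L,u}L\cong\Omega$; the case $L\times_{M,v}\Omega\cong\Omega$ for an $((L,u),(M,v))$-biset is symmetric. Recall the usual isomorphism of $(M,L)$-bisets
\[
\mu\colon \Omega\times_L L\to\Omega,\qquad (\omega,{}_L l)\mapsto \omega l .
\]
By Definition~\ref{composition}, $\Omega\times_{L,u}L$ is the $(M,L)$-subbiset generated by the pairs $(\omega,{}_L l)$ with $\omega\in\Omega^{(v,u)}$ and $l\in L^u$. Its image under $\mu$ is therefore $M\,\Omega^{(v,u)}L^u\,L=M\,\Omega^{(v,u)}L$.

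The key point is that $M\,\Omega^{(v,u)}L=\Omega$. By Remark~\ref{rem DDelta bisets}, Glauberman's Lemma applies: every $(M,L)$-orbit of $\Omega$ is $(v,u)$-invariant, so it contains a $(v,u)$-fixed point. Hence the restriction of $\mu$ is a bijection $\Omega\times_{L,u}L\to\Omega$ of $(M,L)$-bisets.

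It remains to check equivariance under $(v,u)$. For a pair $(\omega,{}_L l)$ we have
\[
\mu\bigl((v,u)\cdot(\omega,{}_L l)\bigr)=\bigl((v,u)\cdot\omega\bigr)u(l)=(v,u)\cdot(\omega l),
\]
by condition~(i). Naturality of the resulting isomorphism in $\Omega$ is clear.

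The only non-formal step is the surjectivity, that is, showing the generated subbiset is all of $\Omega$. That is exactly where Glauberman's Lemma and condition~(ii) are needed.
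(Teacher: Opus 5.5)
Your proof is correct and follows essentially the same route as the paper: you restrict the canonical isomorphism $\Omega\times_L L\to\Omega$ to the subbiset generated by the pairs with $\omega\in\Omega^{(v,u)}$ and $l\in L^u$, whose image $M\Omega^{(v,u)}L$ is all of $\Omega$ by Glauberman's Lemma. You additionally verify the $\langle(v,u)\rangle$-equivariance explicitly, which the paper leaves implicit.
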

\begin{proof}
	Let
	\[
	\Id_{(L,u)}:=L
	\]
	with its usual regular $(L,L)$-biset structure and with the action of $\langle(u,u)\rangle$ given by
	\[
	(u^i,u^i)\cdot l=u^i(l).
	\]
	The unique $(L,L)$-orbit is clearly invariant under $(u,u)$, so $\Id_{(L,u)}$ is an $\big((L,u),(L,u)\big)$-biset.
	
	We now show that $\Id_{L,u}$ acts as a right identity. Let $\Omega$ be a 
	$((M,v),(L,u))$-biset. By definition, $\Omega\times_{L,u}\Id_{(L,u)}$ is the $(M,L)$-subbiset of $\Omega\times_LL$ 
	generated by the image of the map 
	$\theta: \Omega^{(v,u)}\times_{L^u}\Id_L^{(u,u)}\to \Omega\times_LL$ sending $(\omega,_{L^u}l)$ to $(\omega,_Ll)$, for 
	$\omega\in \Omega$ and $l\in \Id_L^{(u,u)}=L^u$.
%	\[
%	X\times_{L,u}\Id_{(L,u)}=M\bigl(X^{(v,u)}\times_{L^u}L^{(u,u)}\bigr)L=M\bigl(X^{(v,u)}\times_{L^u}L^u\bigr)L
%	\]
	The map
	\[
	\Omega^{(v,u)}\times_{L^u}L^u\longrightarrow \Omega, \qquad (\omega,{}_{L^u}l)\longmapsto \omega l,
	\]
	identifies \(\Omega^{(v,u)}\times_{L^u}L^u\) with \(\Omega^{(v,u)}\). Therefore
	\[
	X\times_{L,u}\Id_{(L,u)}=M\Omega^{(v,u)}L.
	\]
	Since $X$ is an $((M,v), (L,u))$-biset, every $(M\times L)$-orbit of $\Omega$ contains a point fixed by \((v,u)\), by Glauberman's Lemma. Hence
	\[
	M\Omega^{(v,u)}L=\Omega.
	\]
	This prove the claim. One can similarly show that $\Id_{(L,u)}$ is also a left identity.
\end{proof}

%%%%%%%%%%%%%%%%%%%%% SECTION 5 %%%%%%%%%%%%%%%%%%%%%%%%%%%%

\section{Functors associated to $\big((M,v),(L,u)\big)$-bisets}
Let $(M,v)$ and $(L,u)$ be $\DD$-pairs, and $\Omega$ be an $\big((M,v),(L,u)\big)$-biset. By Remark~\ref{Un 1}, if $\Phi$ is an $(L,u)$-set, we can view $\Phi$ as an $\big((L,u),(\un,1)\big)$-biset. Then $\Omega\times_{L,u}\Phi$ is an $\big((M,v),(\un,1)\big)$-biset, hence an $(M,v)$-set. By Lemma~\ref{composition is functorial}, this construction $\Phi\mapsto \Omega\times_{L,u}\Phi$ is in fact a functor $F_\Omega$ from $\pset{(L,u)}$ to $\pset{(M,v)}$. We note that this functor preserves coproducts.\par
Moreover, if $(N,w)$ is a $\DD$-pair, and if $\Gamma$ is a $\big((N,w),(M,v)\big)$-biset, then Proposition~\ref{composition is associative} shows that the functors $F_\Gamma\circ F_\Omega$ and $F_{\Gamma\times_{M,v}\Omega}$, from $\pset{(L,u)}$ to $\pset{(N,w)}$, are isomorphic.\par
We consider five special cases of these functors $F_\Omega$, associated to what we call {\em elementary} $\DD$-pair bisets, defined as follows:

\begin{definition}
	Let $(L,u)$ be a $\DD$-pair, let $X\leq L$ be $u$-stable, and set $u':=u|_X\in\Aut(X)$. Let also $Y\unlhd L$ be $u$-stable, and let	$\bar u\in\Aut(L/Y)$ be the automorphism induced by $u$.
	
	\smallskip
	{\rm (a)} The \emph{restriction} biset
	\[
	\Res^{(L,u)}_{(X,u')}\in \lbiset{(X,u')}{(L,u)}
	\]
	is defined as the ordinary $(X,L)$-biset $L$, equipped with the action of
	$\langle(u',u)\rangle$ given by
	\[
	(u'^i,u^i)\cdot l=u^i(l)\qquad (l\in L,\ i\in\ZZ).
	\]
	The corresponding functor from $\pset{(L,u)}$ to $\pset{(X,u')}$ will also be called {\em restriction}, and denoted by  $\Res^{(L,u)}_{(X,u')}$.

	\smallskip
	{\rm (b)} The \emph{induction} biset
	\[
	\Ind^{(L,u)}_{(X,u')} \in \lbiset{(L,u)}{(X,u')}
	\]
	is defined as the ordinary $(L,X)$-biset $L$, equipped with the action of
	$\langle(u,u')\rangle$ given by
	\[
	(u^i,u'^i)\cdot l=u^i(l) \qquad (l\in L,\ i\in\ZZ).
	\]
	
	The corresponding functor from $\pset{(X,u')}$ to $\pset{(L,u)}$ will also be called {\em induction}, and denoted by  $\Ind^{(L,u)}_{(X,u')}$.

	\smallskip
	{\rm (c)} The \emph{inflation} biset
	\[
	\Inf^{(L,u)}_{(L/Y,\bar u)} \in \lbiset{(L,u)}{(L/Y, \bar u)}
	\]
	is defined as the ordinary $(L,L/Y)$-biset $L/Y$, where $L$ acts on $L/Y$ through the quotient map $L\to L/Y$. We equip it with the action of $\langle(u,\bar u)\rangle$ given by
	\[
	(u^i,\bar u^i)\cdot(lY)=u^i(l)Y.
	\]
	The corresponding functor from $\pset{(L/Y,\bar{u})}$ to $\pset{(L,u)}$ will also be called {\em inflation}, and denoted by  $\Inf^{(L,u)}_{(L/Y,\bar{u})}$.

	\smallskip
	{\rm (d)} The \emph{deflation} biset
	\[
	\Def^{(L,u)}_{(L/Y,\bar u)} \in \lbiset{(L/Y,\bar u)}{(L,u)}
	\]
	is defined as the ordinary $(L/Y,L)$-biset $L/Y$, where $L$ acts on the	right through the quotient map $L\to L/Y$. We equip it with the action of $\langle(\bar u,u)\rangle$ given by
	\[
	(\bar u^i,u^i)\cdot(lY)=u^i(l)Y.
	\]
		The corresponding functor from $\pset{(L,u)}$ to $\pset{(L/Y,\bar{u})}$ will also be called {\em deflation}, and denoted by  $\Def^{(L,u)}_{(L/Y,\bar{u})}$.

	\smallskip
	{\rm (e)} Let $(M,v)$ be another $\DD$-pair and let $f: (L,u)\longrightarrow (M,v)$ be an isomorphism of $\DD$-pairs. We define the \emph{isomorphism} biset
	\[
	\Isom_f \in \lbiset{(M,v)}{(L,u)}
	\]
	to be the ordinary $(M,L)$-biset $M$, where $M$ acts on the left by left multiplication and $L$ acts on the right via $f$. We equip it with the action of $\langle(v,u)\rangle$ given by
	\[
	(v^i,u^i)\cdot m=v^i(m)
	\qquad (m\in M,\ i\in\ZZ).
	\]
		The corresponding functor from $\pset{(L,u)}$ to $\pset{(M,v)}$ will also be called {\em isomorphism}, and denoted by $\Isom_f$.
\end{definition}

%%%%%%%%%%%%%%%%%%%%%%% SECTION 6 %%%%%%%%%%%%%%%%%%%%%%%%%%%%%%%%%

\section{The diagonal $\DD$-pair bisets}

\begin{definition}
	Let $(L,u)$ and $(M,v)$ be $\DD$-pairs. A \textit{diagonal} $((M,v),(L,u))$-biset $\Omega$ is an $((M,v),(L,u))$-biset which is \emph{bifree} as an $(M,L)$-biset, i.e., 
it is free as left $M$-set and as right $L$-set.
A morphism of diagonal $((M,v),(L,u))$-bisets is just a morphism of $(M\times L)\rtimes\langle(v,u)\rangle$-sets. We denote the resulting category by
\[
\lbiset{(M,v)}{(L,u)}^\Delta\,.
\]
It is the full subcategory of $\lbiset{(M,v)}{(L,u)}$ consisting of diagonal $((M,v),(L,u))$-bisets.
\end{definition}

\begin{remark}
There is an obvious notion of disjoint union of diagonal $((M,v),(L,u))$-bisets. 
An $((M,v),(L,u))$-biset $X$ is diagonal if and only if  each $(M\times L)\langle (v,u)\rangle$-orbit of $X$ is diagonal. A transitive diagonal $((M,v),(L,u))$-biset is just a diagonal $((M,v),(L,u))$-biset which is transitive as $(M\times L)\langle(v,u)\rangle$-set. As in Remark~\ref{rem DDelta bisets}, Glauberman's theorem applies: Every $M\times L$-orbit of a diagonal $((M,v),(L,u))$-biset $X$ has a point fixed by $(v,u)$ and the set of $(v,u)$-fixed points in a given $M\times L$-orbit form an $M^v\times L^u$-orbit.
\end{remark}

\begin{proposition}\label{prop transitive diagonal bisets}
Let $(L,u)$ and $(M,v)$ be $\DD$-pairs. Up to isomorphism, the transitive diagonal $((M,v),(L,u))$-bisets are precisely the transitive $(M\times L)\langle(v,u)\rangle$-sets of the form $(M\times L)/X$, where $X\le M\times L$ is a diagonal $(v,u)$-invariant subgroup, i.e., 
$$X=\Delta(B,\alpha,A):=\{\alpha(a),a)\mid a\in A\}\,,$$ 
for an isomorphism $\alpha\colon A\to B$ between subgroups $A\le L$ and $B\le M$. 

If $X$ and $Y$ are $(v,u)$-invariant diagonal subgroups of $M\times L$, then $(M\times L)/X$ and $(M\times L)/Y$ are isomorphic as diagonal $((M,v),(L,u))$-bisets if and only if $X$ and $Y$ are conjugate by an element in $M^v\times L^u$. In particular, the isomorphism classes of transitive $((M,v),(L,u))$-bisets are in bijection with the $(M^v\times L^u)$-conjugacy classes of $(v,u)$-invariant diagonal subgroups of $M\times L$. 
\end{proposition}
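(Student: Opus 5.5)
The plan is to reduce everything to Proposition~\ref{prop transitivebisets} and add one observation: for a transitive $(M\times L)$-set $(M\times L)/X$, bifreeness is equivalent to $X$ being a diagonal subgroup. By Proposition~\ref{prop transitivebisets}, every transitive $((M,v),(L,u))$-biset is isomorphic to $(M\times L)/X$ for some $(v,u)$-invariant $X\le M\times L$. Isomorphism classes correspond to $(M^v\times L^u)$-conjugacy classes of such $X$. Bifreeness is preserved under isomorphism, so it only remains to decide which of these $(M\times L)/X$ are bifree. Also, $M^v\times L^u$-conjugation preserves both diagonality and $(v,u)$-invariance, so the classification statement will follow once the characterization is proved.

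For the characterization I will use the standard biset fact, as in \cite{Bouc2010a}. The stabilizer in $M$ (acting on the left) of the point $(m,l)X$ is
\[
\{x\in M\mid (m^{-1}xm,1)\in X\}={}^m k_1(X),
\]
where $k_1(X)=\{x\in M\mid (x,1)\in X\}$. Similarly, the right stabilizer of $L$ is a conjugate of $k_2(X)=\{y\in L\mid (1,y)\in X\}$. Thus $(M\times L)/X$ is free on the left if and only if $k_1(X)=1$, and free on the right if and only if $k_2(X)=1$.

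By Goursat's lemma, $X$ determines $p_1(X)=B$, $p_2(X)=A$ and an isomorphism $A/k_2(X)\to B/k_1(X)$. When both kernels are trivial, $X$ is the graph $\Delta(B,\alpha,A)$ of an isomorphism $\alpha\colon A\to B$. Conversely, if $X=\Delta(B,\alpha,A)$, then $(x,1)\in X$ forces $x=\alpha(1)=1$, and $(1,y)\in X$ forces $\alpha(y)=1$, hence $y=1$.

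No step is a real obstacle. The only point to check is that the stabilizer computation is correct for the right action, where the convention $\omega l=(l^{-1})\cdot\omega$ makes the stabilizer $(l\,k_2(X)\,l^{-1})$ or its inverse, which is still trivial exactly when $k_2(X)$ is. Invariance under $(v,u)$ needs no further attention: it is part of the hypothesis carried over from Proposition~\ref{prop transitivebisets}. Concretely, $(v,u)(X)=X$ means $\alpha\circ u|_A=v|_B\circ\alpha$ with $u(A)=A$ and $v(B)=B$, though the statement does not require this reformulation.
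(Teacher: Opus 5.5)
Your proposal is correct and is essentially the paper's proof: the paper also reduces to Proposition~\ref{prop transitivebisets} together with the fact that a transitive $(M,L)$-biset is bifree if and only if its stabilizer in $M\times L$ is diagonal. The only difference is that you prove this standard fact via the stabilizer computation ($k_1(X)=k_2(X)=1$) and Goursat's lemma, whereas the paper leaves it implicit.
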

\begin{proof} 
This follows from Proposition~\ref{prop transitivebisets}, since an ordinary transitive $(M,L)$-biset is bifree if and only if its stabilizer in $M\times L$ is diagonal. 
\end{proof}

\begin{lemma}\label{diagonal injective}
	Let $(L,u)$, $(M,v)$ and $(N,w)$ be $\DD$-pairs, let $\Omega$ be a diagonal $((M,v),(L,u))$-biset and let $\Gamma$ be a diagonal $((N,w),(M,v))$-biset. Then the map
	\[
	(\gamma,{}_{M^v}\omega)\in \Gamma^{(w,v)}\times_{M^v}\Omega^{(v,u)} \longmapsto (\gamma,{}_M \omega)\in \Gamma\times_M \Omega
	\]
	is injective.
\end{lemma}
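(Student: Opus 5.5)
We unwind what equality in $\Gamma\times_M\Omega$ means and use bifreeness to pin down the connecting element. Suppose $\gamma,\gamma'\in\Gamma^{(w,v)}$ and $\omega,\omega'\in\Omega^{(v,u)}$ satisfy $(\gamma,{}_M\omega)=(\gamma',{}_M\omega')$ in $\Gamma\times_M\Omega$. By definition of the biset product, there is some $m\in M$ with $\gamma'=\gamma m$ and $\omega'=m^{-1}\omega$. The goal is to show that $m$ can be taken in $M^v$. In fact we will show that this $m$ is forced to lie in $M^v$, which gives $(\gamma',{}_{M^v}\omega')=(\gamma,{}_{M^v}\omega)$ in $\Gamma^{(w,v)}\times_{M^v}\Omega^{(v,u)}$.

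Apply $(w,v)$ to the relation $\gamma'=\gamma m$. Using the compatibility rule (i) for $\Gamma$ and the fact that $\gamma$ and $\gamma'$ are $(w,v)$-fixed, we get
\[
\gamma m=\gamma'=(w,v)\cdot\gamma'=(w,v)\cdot(\gamma m)=\bigl((w,v)\cdot\gamma\bigr)v(m)=\gamma\,v(m).
\]
Since $\Gamma$ is diagonal, it is free as a right $M$-set, so $\gamma m=\gamma v(m)$ implies $m=v(m)$, that is, $m\in M^v$. This is exactly what was needed.

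Only freeness of $\Gamma$ as a right $M$-set is used; symmetrically, one could use freeness of $\Omega$ as a left $M$-set via $\omega'=m^{-1}\omega$. There is no real obstacle here. The only point needing care is that the relation defining $\times_M$ is a single-step relation, $(\gamma m,\omega)\sim(\gamma,m\omega)$, which is already an equivalence relation because $M$ is a group. So equality really does yield one element $m$, not a chain of elements.
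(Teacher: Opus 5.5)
Your proof is correct and follows the paper's argument: you take the element $m\in M$ linking the two representatives, apply $(w,v)$ to $\gamma'=\gamma m$, and use that $\Gamma$ is free as a right $M$-set to conclude $v(m)=m$, so $m\in M^v$. Your side remarks are also accurate: the defining relation is a single group orbit, so one element $m$ suffices, and left freeness of $\Omega$ would work equally well.
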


\begin{proof}
	Suppose that $\gamma,\gamma'\in \Gamma^{(w,v)}$ and $\omega,\omega'\in \Omega^{(v,u)}$ are such that
	\[
	(\gamma,{}_M \omega)=(\gamma',{}_M \omega').
	\]
	Then there exists $\mu\in M$ such that
	\[
	\gamma'=\gamma\mu^{-1} \qquad\text{and}\qquad \omega'=\mu \omega.
	\]
	Taking the image under $(w,v)$ of the first equality gives
	\[
	\gamma'=\gamma\mu^{-1}=\gamma v(\mu)^{-1},
	\]
	hence $v(\mu)=\mu$, since $\Gamma$ is right free. In other words $\mu\in M^v$. Then
	\[
	(\gamma',{}_{M^v}\omega')=(\gamma\mu^{-1},{}_{M^v}\mu \omega)=(\gamma,{}_{M^v}\omega),
	\]
	as was to be shown.
\end{proof}

In what follows, we will consider
\[
\Gamma^{(w,v)}\times_{M^v}\Omega^{(v,u)}
\]
as a subset of
\[
\Gamma\times_M \Omega
\]
under the map $\theta$ from Definition~\ref{composition}.

\begin{lemma}\label{diagonal composition}
	Let $(L,u)$, $(M,v)$ and $(N,w)$ be $\DD$-pairs, let $\Omega$ be a diagonal $((M,v),(L,u))$-biset and let $\Gamma$ be a diagonal $((N,w),(M,v))$-biset. Then the composition  $\Gamma\times_{M,v}\Omega$ is a diagonal $((N,w),(M,v))$-biset.
%	\[
	%N\bigl(Y^{(w,v)}\times_{M^v}X^{(v,u)}\bigr)L
	%\]
	%of $Y\times_M X$.
\end{lemma}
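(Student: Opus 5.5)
The statement contains a typo: the composition should be a diagonal $((N,w),(L,u))$-biset. By Lemma~\ref{composition is functorial} we already know that $\Gamma\times_{M,v}\Omega$ is an $((N,w),(L,u))$-biset. So the only thing left to prove is that it is bifree as an $(N,L)$-biset. The plan is to reduce this to the classical fact that the ordinary biset product of bifree bisets is bifree. Then we observe that bifreeness passes to subbisets.

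First, I will verify that $\Gamma\times_M\Omega$ is free as a left $N$-set. Suppose $n(\gamma,{}_M\omega)=(\gamma,{}_M\omega)$. Then there is $m\in M$ with
\[
n\gamma=\gamma m^{-1}\qquad\text{and}\qquad m\omega=\omega .
\]
Since $\Omega$ is left $M$-free, $m=1$. Hence $n\gamma=\gamma$, and since $\Gamma$ is left $N$-free, $n=1$.

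The symmetric argument gives right $L$-freeness. Suppose $(\gamma,{}_M\omega)l=(\gamma,{}_M\omega)$. Then there is $m\in M$ with
\[
\gamma m^{-1}=\gamma\qquad\text{and}\qquad m\omega=\omega l .
\]
Right $M$-freeness of $\Gamma$ forces $m=1$. Then $\omega l=\omega$, so $l=1$ by right $L$-freeness of $\Omega$.

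Second, $\Gamma\times_{M,v}\Omega$ is by definition an $(N,L)$-subbiset of $\Gamma\times_M\Omega$. Stabilizers of its points in $N$ and in $L$ are the same as those computed in the ambient biset, so they are trivial. Hence $\Gamma\times_{M,v}\Omega$ is bifree.

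There is no real obstacle here. The only care needed is in the bookkeeping of the equivalence relation defining $\times_M$, namely that $(\gamma,{}_M\omega)=(\gamma',{}_M\omega')$ if and only if $\gamma'=\gamma m^{-1}$ and $\omega'=m\omega$ for some $m\in M$. With that, the argument reduces cleanly to the freeness hypotheses on $\Gamma$ and $\Omega$.

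As an alternative, one could check the stabilizer condition via Proposition~\ref{prop transitive diagonal bisets}. However, the direct freeness argument above avoids any need to compute stabilizer subgroups.
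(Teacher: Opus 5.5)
Your proof is correct and is essentially the paper's argument. $\Gamma\times_{M,v}\Omega$ is an $((N,w),(L,u))$-biset by Lemma~\ref{composition is functorial}, and it is an $(N,L)$-subbiset of the bifree biset $\Gamma\times_M\Omega$, hence bifree. You also verify the bifreeness of $\Gamma\times_M\Omega$ explicitly, which the paper takes as known, and you rightly read the target as $((N,w),(L,u))$.
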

\begin{proof} All we have to do is to show that the $(N,L)$-biset $\Gamma\times_{M,v}\Omega$ is bifree. This follows from Lemma~\ref{diagonal injective}, since $\Gamma\times_{M,v}\Omega$ is a subbiset of $\Gamma\times_M\Omega$
and since $\Gamma\times_M \Omega$ is again bifree as $(N,L)$-biset.
\end{proof}

Moreover we have the following:

\begin{lemma}\label{lem fixedpoints}
	Let $(L,u)$, $(M,v)$ and $(N,w)$ be $\DD$-pairs, let $\Omega$ be a diagonal $((M,v),(L,u))$-biset and let $\Gamma$ be a diagonal $((N,w),(M,v))$-biset. Then
	\[
	(\Gamma\times_{M,v}\Omega)^{(w,u)}=\Gamma^{(w,v)}\times_{M^v}\Omega^{(v,u)}.
	\]
\end{lemma}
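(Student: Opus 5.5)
The inclusion ``$\supseteq$'' is immediate. By Lemma~\ref{diagonal injective} we regard $\Gamma^{(w,v)}\times_{M^v}\Omega^{(v,u)}$ as the subset $\operatorname{Im}(\theta)$ of $\Gamma\times_{M,v}\Omega$. By the formula for the $\langle(w,u)\rangle$-action in Lemma~\ref{composition is functorial}, each element $(\gamma,{}_M\omega)$ with $\gamma\in\Gamma^{(w,v)}$ and $\omega\in\Omega^{(v,u)}$ is fixed by $(w,u)$.

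For ``$\subseteq$'' I would use Glauberman's Lemma, as in the proof of Proposition~\ref{composition is associative}. Every $(N,L)$-orbit of $\Gamma\times_{M,v}\Omega$ meets $\operatorname{Im}(\theta)$. Its $(w,u)$-fixed points therefore form a single $N^w\times L^u$-orbit, namely the orbit of any fixed point in $\operatorname{Im}(\theta)$. Hence
\[
(\Gamma\times_{M,v}\Omega)^{(w,u)}=N^w\cdot\operatorname{Im}(\theta)\cdot L^u .
\]
So it suffices to show that $\operatorname{Im}(\theta)$ is stable under left multiplication by $N^w$ and right multiplication by $L^u$. This is clear: for $n\in N^w$ and $\gamma\in\Gamma^{(w,v)}$ we have $(w,v)\cdot(n\gamma)=w(n)\gamma=n\gamma$, so $n\gamma\in\Gamma^{(w,v)}$. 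Similarly, $\omega l\in\Omega^{(v,u)}$ for $l\in L^u$ and $\omega\in\Omega^{(v,u)}$. Thus $n(\gamma,{}_M\omega)l=(n\gamma,{}_M\omega l)\in\operatorname{Im}(\theta)$, and equality follows.

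The only delicate point is to make sure Glauberman's Lemma applies to the $(N\times L)\langle(w,u)\rangle$-set $\Gamma\times_{M,v}\Omega$. It does: $(w,u)$ has $p'$-order, $N\times L$ is a $p$-group, and each $(N\times L)$-orbit is $(w,u)$-invariant because it contains a fixed point. So the lemma yields that the fixed points in each orbit form exactly one $N^w\times L^u$-orbit.

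Bifreeness is not needed for this argument; it enters only through Lemma~\ref{diagonal injective}. That lemma is what makes the left-hand side literally equal, rather than merely surjected onto, $\Gamma^{(w,v)}\times_{M^v}\Omega^{(v,u)}$.
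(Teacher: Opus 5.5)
Your proof is correct and follows essentially the same route as the paper. You get $\supseteq$ from the formula for the $\langle(w,u)\rangle$-action, and $\subseteq$ by applying Glauberman's Lemma to an $(N,L)$-orbit through a point of $\operatorname{Im}(\theta)$, then checking that $N^w$ and $L^u$ preserve $\Gamma^{(w,v)}$ and $\Omega^{(v,u)}$. Your remark that bifreeness enters only through Lemma~\ref{diagonal injective}, to identify $\Gamma^{(w,v)}\times_{M^v}\Omega^{(v,u)}$ with $\operatorname{Im}(\theta)$, is also accurate.
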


\begin{proof}
	By the definition of the action of \(\langle(w,u)\rangle\) on $\Gamma\times_{M,v}\Omega$, every element of
	\[
	\Gamma^{(w,v)}\times_{M^v}\Omega^{(v,u)}
	\]
	is fixed by $(w,u)$. Hence
	\[
	\Gamma^{(w,v)}\times_{M^v}\Omega^{(v,u)} \subseteq (\Gamma\times_{M,v}\Omega)^{(w,u)}\,.
	\]
	Conversely, let
	\[
	z\in (\Gamma\times_{M,v}\Omega)^{(w,u)}.
	\]
	By the definition of the composition, the element $z$ lies in the $(N,L)$-orbit of some element
	\[
	s=(\gamma,{}_{M^v}\omega)\in \Gamma^{(w,v)}\times_{M^v}\Omega^{(v,u)}.
	\]
	In particular, $s$ is fixed by $(w,u)$, and the $(N,L)$-orbit $NsL$ is stable under \((w,u)\). Since $NsL$ contains the fixed point $s$, Glauberman's Lemma implies that $(NsL)^{(w,u)}$ is transitive under $N^w\times L^u$. Therefore, there exist $n\in N^w$ and $l\in L^u$ such that
	\[
	z=nsl=n(\gamma,{}_{M^v} \omega)l=(n\gamma,{}_{M^v} \omega l)\,.
	\]
	Since $n\in N^w$, $l\in L^u$, $\gamma\in \Gamma^{(w,v)}$ and $\omega\in \Omega^{(v,u)}$, we have
	\[
	(w,v)\cdot(n\gamma)=w(n)((w,v)\cdot \gamma)=n\gamma
	\]
	and
	\[
	(v,u)\cdot(\omega l)=((v,u)\cdot \omega)u(l)=\omega l.
	\]
	Therefore
	\[
	n\gamma\in \Gamma^{(w,v)} \qquad\text{and}\qquad \omega l\in \Omega^{(v,u)}.
	\]
	Hence
	\[
	z=(n\gamma,{}_{M^v}\omega l) \in \Gamma^{(w,v)}\times_{M^v}\Omega^{(v,u)}.
	\]
	This proves the reverse inclusion, and hence the equality.
\end{proof}

\begin{proposition}
	Let $(K,t)$, $(N,w)$, $(M,v)$ and $(L,u)$ be $\DD$-pairs. Let $\Theta$ be a diagonal $((K,t),(N,w))$-biset, let $\Gamma$ be a diagonal $((N,w),(M,v))$-biset, and let $\Omega$ be a diagonal $((M,v),(L,u))$-biset. Then there is a natural isomorphism of diagonal $((K,t),(L,u))$-bisets
	\[
	(\Theta\times_{N,w}\Gamma)\times_{M,v}\Omega \cong \Theta\times_{N,w}(\Gamma\times_{M,v}\Omega).
	\]
\end{proposition}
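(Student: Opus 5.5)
This is essentially Proposition~\ref{composition is associative} specialized to diagonal bisets; the only new content is that both sides are diagonal. My plan is to take the isomorphism from Proposition~\ref{composition is associative} and then check diagonality of the two sides using the results of this section.

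First, by Lemma~\ref{diagonal composition}, $\Theta\times_{N,w}\Gamma$ is a diagonal $((K,t),(M,v))$-biset and $\Gamma\times_{M,v}\Omega$ is a diagonal $((N,w),(L,u))$-biset. (The statement of that lemma has a typo in the target pair; its proof works for arbitrary pairs.) Applying Lemma~\ref{diagonal composition} once more, both $(\Theta\times_{N,w}\Gamma)\times_{M,v}\Omega$ and $\Theta\times_{N,w}(\Gamma\times_{M,v}\Omega)$ are diagonal $((K,t),(L,u))$-bisets.

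Next, Proposition~\ref{composition is associative} gives a natural isomorphism of $((K,t),(L,u))$-bisets between them. Morphisms in $\lbiset{(K,t)}{(L,u)}^\Delta$ are just morphisms of $(K\times L)\rtimes\langle(t,u)\rangle$-sets, since that category is a full subcategory. So this isomorphism is already an isomorphism of diagonal bisets, and naturality is inherited from the usual associativity isomorphism of ordinary biset products.

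For a more explicit description I would also note the following. By Lemma~\ref{lem fixedpoints}, $(\Theta\times_{N,w}\Gamma)^{(t,v)}=\Theta^{(t,w)}\times_{N^w}\Gamma^{(w,v)}$. The composite on the left is then generated as a $(K,L)$-biset by elements $((\theta,{}_N\gamma),{}_M\omega)$ with $\theta,\gamma,\omega$ fixed. The same holds on the right, and the associativity map matches these generators. Lemma~\ref{diagonal injective} guarantees that no identifications occur among the fixed-point products, but this is not needed for the isomorphism itself.

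The only step requiring any care is making sure diagonality is preserved at each stage, which Lemma~\ref{diagonal composition} handles. I expect no real obstacle.
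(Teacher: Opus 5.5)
Your proposal is correct and matches the paper's proof, which likewise combines Proposition~\ref{composition is associative} (for the natural isomorphism) with Lemma~\ref{diagonal composition} (for diagonality of both sides, using that the diagonal category is a full subcategory). You are also right that the target pair in the statement of Lemma~\ref{diagonal composition} should read $((N,w),(L,u))$.
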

\begin{proof}
This follows from Proposition~\ref{composition is associative} and Lemma~\ref{diagonal composition}.
\end{proof}

\begin{proposition}
	For each $\DD$-pair $(L,u)$, the regular $(L,L)$-biset $L$, equipped with the action of
	$\langle(u,u)\rangle$ given by
	\[
	(u^i,u^i)\cdot l=u^i(l),
	\]
	is a diagonal $\big((L,u),(L,u)\big)$-biset, hence an identity element for the composition of diagonal $\DD$-pair bisets.
\end{proposition}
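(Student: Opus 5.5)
The claim splits into two parts: first, $\Id_{(L,u)}$ is diagonal; second, it is an identity for composition of diagonal $\DD$-pair bisets. The second part comes straight from Proposition~\ref{identity}, so the only real content is the diagonality check.

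\emph{Diagonality.} By Proposition~\ref{identity}, $\Id_{(L,u)}$ is already an $\big((L,u),(L,u)\big)$-biset. It therefore suffices to show that the regular $(L,L)$-biset $L$ is bifree. As a left $L$-set, $L$ is acted on by left multiplication, which is free: $xl=l$ forces $x=1$. Right multiplication is free for the same reason.

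Alternatively, one can use Proposition~\ref{prop transitive diagonal bisets}. The $(L\times L)$-set $L$ is transitive, and the stabilizer of $1$ is
\[
\{(x,y)\in L\times L\mid x\,1\,y^{-1}=1\}=\Delta(L,\id_L,L).
\]
This is a diagonal subgroup, and it is $(u,u)$-invariant.

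\emph{Identity property.} Let $\Omega$ be a diagonal $((M,v),(L,u))$-biset. Proposition~\ref{identity} gives $\Omega\times_{L,u}\Id_{(L,u)}\cong\Omega$. We need this isomorphism to be one of diagonal bisets. Since $\lbiset{(M,v)}{(L,u)}^\Delta$ is a full subcategory of $\lbiset{(M,v)}{(L,u)}$, it is enough to know that the isomorphism respects the $\langle(v,u)\rangle$-action.

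The isomorphism is $(\omega,{}_L l)\mapsto \omega l$, and it is equivariant:
\[
(v,u)\cdot(\omega,{}_L l)=\bigl((v,u)\cdot\omega,{}_L u(l)\bigr)\mapsto ((v,u)\cdot\omega)u(l)=(v,u)\cdot(\omega l).
\]
The left identity case is symmetric, using $l\omega$.

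I expect no real obstacle here. The only subtlety is that Proposition~\ref{identity} identifies $\Omega\times_{L,u}\Id_{(L,u)}$ with $M\Omega^{(v,u)}L$ inside $\Omega\times_L L\cong\Omega$. The computation above confirms that this identification is compatible with the twisted action. Naturality in $\Omega$ is clear from the formula.
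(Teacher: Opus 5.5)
Your proposal is correct and follows the paper's route: reduce to Proposition~\ref{identity}, and observe that the regular $(L,L)$-biset $L$ is bifree. Your extra check that $(\omega,{}_L l)\mapsto \omega l$ is $\langle(v,u)\rangle$-equivariant is valid, and it supplies a detail the paper leaves implicit.
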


\begin{proof}
By Proposition~\ref{identity}, all we have to show is that $\Id_{(L,u)}$ is a diagonal $((L,u),(L,u))$-biset. This follows from the fact that the underlying $(L,L)$-biset $L$ is bifree.
\end{proof}

Next we prove a formula for the composition of transitive diagonal $\DD$-pair bisets.

\begin{proposition}\label{prop compositionformulabisets}
	Let $(L,u)$, $(M,v)$ and $(N,w)$ be $\DD$-pairs. Let
	\[
	\Gamma=(N\times M)/\Delta(T,\beta,Z)
	\]
	be a transitive diagonal $((N,w),(M,v))$-biset, and let
	\[
	\Omega=(M\times L)/\Delta(Y,\alpha,X)
	\]
	be a transitive diagonal $((M,v),(L,u))$-biset,
	where $\Delta(T,\beta,Z)$ is $(w,v)$-invariant and
	$\Delta(Y,\alpha,X)$ is $(v,u)$-invariant (see Proposition~\ref{prop transitive diagonal bisets}). 
	Then
	\[
	\Gamma\times_{M,v}\Omega \cong \coprod_{m\in Z^v\backslash M^v/Y^v} (N\times L)/\Delta_m,
	\]
	where
	\[
	\Delta_m =\Delta\Bigl(\beta(Z\cap {}^mY), \beta i_m\alpha,\alpha^{-1}(Z^m\cap Y)\Bigr).
	\]
\end{proposition}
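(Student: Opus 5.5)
The plan is to compute $\Gamma\times_{M,v}\Omega$ by first determining its $(N,L)$-orbits through their $(w,u)$-fixed points, and then identifying the stabilizer of a fixed point in each orbit. By Lemma~\ref{lem fixedpoints} we have
\[
(\Gamma\times_{M,v}\Omega)^{(w,u)}=\Gamma^{(w,v)}\times_{M^v}\Omega^{(v,u)} .
\]
Glauberman's Lemma, applied as in the remark following the definition of diagonal $\DD$-pair bisets, shows that the $(N\times L)\langle(w,u)\rangle$-orbits of $\Gamma\times_{M,v}\Omega$ (equivalently its $(N,L)$-orbits) are in bijection with the $(N^w\times L^u)$-orbits on this fixed-point set. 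So the first task is to understand the $(N^w,L^u)$-biset $\Gamma^{(w,v)}\times_{M^v}\Omega^{(v,u)}$.

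Since $\Delta(T,\beta,Z)$ is $(w,v)$-invariant, the coset $1\cdot\Delta(T,\beta,Z)$ is fixed in $\Gamma$. Glauberman's Lemma then gives $\Gamma^{(w,v)}=(N^w\times M^v)\cdot\gamma_0$, with $\gamma_0$ the trivial coset. Its stabilizer in $N^w\times M^v$ is $\Delta(T,\beta,Z)^{(w,v)}=\Delta(T^w,\beta,Z^v)$; here $\beta$ commutes with $w,v$ by invariance, so $\beta(Z^v)=T^w$. Hence $\Gamma^{(w,v)}\cong (N^w\times M^v)/\Delta(T^w,\beta|,Z^v)$. In the same way, $\Omega^{(v,u)}\cong (M^v\times L^u)/\Delta(Y^v,\alpha|,X^u)$.

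Next we apply the ordinary Mackey-type composition formula for bisets (\cite[Lemma~2.3.24]{Bouc2010a}) to the groups $N^w,M^v,L^u$. It gives
\[
\Gamma^{(w,v)}\times_{M^v}\Omega^{(v,u)}\cong\coprod_{m\in Z^v\backslash M^v/Y^v}(N^w\times L^u)/\Delta\bigl(\beta(Z^v\cap{}^mY^v),\beta i_m\alpha,\alpha^{-1}(Z^{v\,m}\cap Y^v)\bigr),
\]
where the point $(\gamma_0,{}_{M^v}m\omega_0)$ represents the summand indexed by $m$. So the orbits of $\Gamma\times_{M,v}\Omega$ are indexed by $m\in Z^v\backslash M^v/Y^v$, with $(\gamma_0,{}_M m\omega_0)$ as a representative.

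It remains to compute the stabilizer in $N\times L$ of $z_m=(\gamma_0,{}_M m\omega_0)$ inside $\Gamma\times_M\Omega$. By the ordinary biset computation (the same lemma for $N,M,L$), this stabilizer is exactly $\Delta_m$. The map $(N\times L)/\Delta_m\to N z_m L$ is an isomorphism of $(N,L)$-bisets. It is also $(w,u)$-equivariant, because $z_m$ is $(w,u)$-fixed, so it matches the structure from Lemma~\ref{lem sometransitives}. This also shows that $\Delta_m$ is $(w,u)$-invariant. Taking the disjoint union over $m$ gives the claim.

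The main obstacle is the bookkeeping in the orbit count. We must check that distinct $m$ in $Z^v\backslash M^v/Y^v$ really give distinct $(N,L)$-orbits of $\Gamma\times_{M,v}\Omega$. This holds because the correspondence between $(N^w\times L^u)$-orbits of fixed points and $(N,L)$-orbits is a bijection, which rests on Glauberman's Lemma together with the injectivity in Lemma~\ref{diagonal injective}. We must also check that every orbit of the fixed-point biset is of the form above, which the Mackey decomposition over $M^v$ guarantees.
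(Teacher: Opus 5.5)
Your proposal is correct and follows essentially the same route as the paper: Lemma~\ref{lem fixedpoints} plus Glauberman's Lemma reduce the orbit count to the $(N^w\times L^u)$-orbits of $\Gamma^{(w,v)}\times_{M^v}\Omega^{(v,u)}$, then \cite[Lemma~2.3.24]{Bouc2010a} over $M^v$ (with stabilizers $\Delta(T^w,\beta,Z^v)$ and $\Delta(Y^v,\alpha,X^u)$) gives the index set $Z^v\backslash M^v/Y^v$, and finally the stabilizer in $N\times L$ of $(\Delta_\beta,{}_M m\Delta_\alpha)$ is $\Delta(T,\beta,Z)*\lexp{(m,1)}{\Delta(Y,\alpha,X)}=\Delta_m$. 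Your remark that the orbit map $(N\times L)/\Delta_m\to N z_m L$ is $(w,u)$-equivariant because $z_m$ is $(w,u)$-fixed makes explicit a point the paper leaves implicit.
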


\begin{proof}
	We abbreviate $\Delta_\beta:=\Delta(T,\beta,Z)$ and $\Delta_\alpha:=\Delta(Y,\alpha,X)$. First note that the element
	\[
	\Delta_\beta:=\Delta(T,\beta,Z)\in \Gamma=(N\times M)/\Delta(T,\beta,Z)
	\]
	is fixed by $(w,v)$. By Glauberman's Lemma, we have
	\[
	\Gamma^{(w,v)}=N^w \Delta_\beta M^v.
	\]
	The stabilizer of the element $\Delta_\beta$ in \(N^w\times M^v\) is
	\[
	\Delta_\beta \cap (N^w\times M^v).
	\]
	Since $\Delta_\beta$ is $(w,v)$-invariant, the isomorphism $\beta\colon Z\to T$ restricts to an isomorphism
	\[
	\beta\colon Z^v\longrightarrow T^w.
	\]
	Therefore
	\[
	\stab_{N^w\times M^v}(\Delta_\beta) = \Delta_\beta \cap (N^w\times M^v)=\Delta(T^w,\beta,Z^v)\,.
	\]
	Similarly,
	$$ \stab_{M^v\times L^u}(\Delta_\alpha)= \Delta_\alpha\cap (M^v\times L^u) = \Delta(Y^v,\alpha, X^u)\,.$$

By Lemma~\ref{diagonal composition}, $\Gamma\times_{M,v} \Omega$ is a diagonal $((N,w),(L,u))$-biset and therefore an $(N\times L, (w,u))$-set (see Remark~\ref{rem DDelta bisets}). By Remark~\ref{rem (L,u)-sets}, the $N\times L$-orbits of $\Gamma\times_{M,v} \Omega$ are in bijection with the $N^w\times L^u$-orbits of $(\Gamma\times_{M,v} \Omega)^{(w,u)} = \Gamma^{(w,v)}\times_{M^v} \Omega^{(v,u)}\subseteq \Gamma\times_M \Omega$, see Lemma~\ref{lem fixedpoints}. 
Moreover, by \cite[Lemma~2.3.24]{Bouc2010a} and its proof, the $N^w\times L^u$-orbits of $\Gamma^{(w,v)}\times_{M^v} \Omega^{(v,u)}$ are represented by the elements
$$ (\Delta_\beta ,_{M^v} m\Delta_\alpha)\,,$$
where $m\in M^v$ runs through a set of representatives of the the double cosets
\begin{equation*}
   p_2\bigl(\stab_{N^w\times M^v}(\Delta_\beta)\bigr) \backslash M^v / p_1\bigl(\stab_{M^v\times L^u}(\Delta_\alpha)\bigr)
   = Z^v \backslash M^v / Y^v\,.
 \end{equation*}
Thus, altogether, the $N\times L$-orbits of $\Gamma\times_{M,v}\Omega$ are represented by the elements 
$$(\Delta_\beta,_M m\Delta_\alpha)\in \Gamma\times_{M,v} \Omega\,.$$
But 
\begin{align*}
   \stab_{N\times L}(\Delta_\beta ,_M m\Delta_\alpha)
   & = \stab_{N\times M}(\Delta_\beta) * \stab_{M\times L}(m\Delta_\alpha) \\
   & = \stab_{N\times M}(\Delta_\beta) * \lexp{(m,1)}{\Delta_\alpha} = \Delta_m\,.
\end{align*}
This completes the proof.
\end{proof}

%%%%%%%%%%%%%%%%%% SECTION 7 %%%%%%%%%%%%%%%%%%%%%%%%%%%%%%%%%%%%

\section{$\DD$-pair biset functors}\label{sec DDelta-pair biset functors}

\begin{notation}
{\rm (a)} Let $(L,u)$ be a $\DD$-pair. We denote by
\[
\mathcal S(L,u)=\{X\leq L\mid u(X)=X\}
\]
the set of $u$-invariant subgroups of $L$, and by $[\mathcal S(L,u)]$ a set of representatives for the $L^u$-conjugacy classes in $\mathcal S(L,u)$.

We denote by $B(L,u)$ the Grothendieck group of finite $(L,u)$-sets with respect to disjoint union. By the classification of transitive $(L,u)$-sets (Corollary~\ref{cor isomclassesoftransitive}),
$B(L,u)$ is the free abelian group with the set of isomorphism classes of transitive $(L,u)$-sets as basis. Equivalently,
\[
B(L,u)=\bigoplus_{X\in[\mathcal S(L,u)]}\ZZ[L/X].
\]

\smallskip
{\rm (b)} We similarly define the corresponding Burnside groups of $\DD$-pair bisets. Let $(L,u)$ and $(M,v)$ be $\DD$-pairs. We denote by
\[
B((M,v),(L,u))
\]
the Grothendieck group of finite $((M,v),(L,u))$-bisets; and by
\[
B^\Delta((M,v),(L,u))
\] 
the Grothendieck group of finite diagonal $((M,v),(L,u))$-bisets with respect to disjoint union. 

For a commutative ring $R$, we set
\[
R B(L,u)=R\otimes_{\ZZ} B(L,u)\,.
\]

Similarly we define $R B((M,v),(L,u))$ and $R B^\Delta((M,v),(L,u))$. 
\end{notation}

\begin{definition}
	Let $R$ be a commutative ring. We define the $R$-linear $\DD$-pair biset category $R \calP$ as follows.
	\begin{itemize}
		\item The objects are $\DD$-pairs.
		\item For two $\DD$-pairs $(L,u)$ and $(M,v)$,
		\[
		\Hom_{R \calP}((L,u),(M,v))=R B((M,v),(L,u)).
		\]
		\item Composition is induced by the composition of $\DD$-pair bisets. 
		\item The identity morphism of $(L,u)$ is $[\Id_{(L,u)}]$.
	\end{itemize}
\end{definition} 

Similarly, we define the $R$-linear diagonal $\DD$-pair biset category $R \calP^{\Delta}$.

\begin{definition}
	Let $R$ be a commutative ring. The $R$-linear diagonal $\DD$-pair biset category $R \calP^{\Delta}$ is defined as follows:
	\begin{itemize}
		\item The objects are $\DD$-pairs.
		\item For two $\DD$-pairs $(L,u)$ and $(M,v)$,
		\[
		\Hom_{R \calP^\Delta}((L,u),(M,v))=R B^\Delta((M,v),(L,u)).
		\]
		\item Composition is induced by the composition of $\DD$-pair bisets. 
		\item The identity morphism of $(L,u)$ is $[\Id_{(L,u)}]$.
	\end{itemize}
\end{definition} 

\begin{definition}
	{\rm (a)} An $R$-linear $\DD$-pair biset functor is an $R$-linear functor
	\[
	R\calP \longrightarrow \lMod{R}.
	\]
	
	\smallskip
	{\rm (b)} An $R$-linear diagonal $\DD$-pair biset functor is an $R$-linear functor
	\[
	R\calP^\Delta \longrightarrow \lMod{R}.
	\]
\end{definition}

%%%%%%%%%%%%%%%%%%%%% SECTION 8 %%%%%%%%%%%%%%%%%%%%%%%%%%%

\section{An equivalence of categories}\label{sec an equivalence}

Recall that $p$ has been a fixed prime throughout this paper. For the remainder of this paper, we further fix an algebraically closed field $k$ of characteristic $p$ and an integral domain $R$ with the property that every $p'$-integer is invertible in $R$ and that $R$ contains roots of unity of arbitrary $p'$-order. This implies that $R$ has characteristic $p$ or zero. In addition, we fix an imbedding $\iota\colon \mu(k)\to \mu(R)$ of the group $\mu(k)$ of roots of unity of $k$ into the group $\mu(R)$ of roots of unity of $R$. Such an embedding always exists. 

\smallskip
Next we recall some definitions from \cite{BoucYilmaz2020}.
For finite groups $G$ and $H$ we denote by $T^\Delta(H,G)$ the Grothendieck group  (with respect to direct sums) of $p$-permutation $(kH,kG)$-bimodules which are projective from either side, or equivalently, if considered as $k[H\times G]$-modules, whose indecomposable direct summands have
diagonal vertices in $H\times G$. Further, we set $RT^\Delta(H,G):=R\otimes_\ZZ T^\Delta(H,G)$. The diagonal $p$-permutation category $Rpp_k^\Delta$ has as objects all finite groups and as morphisms from $G$ to $H$ the set $RT^\Delta(H,G)$. Composition is induced by the tensor product of bimodules and the identity of $G$ is the bimodule $kG$. The category of $R$-linear functiors from $Rpp_k^\Delta$ to $\lMod{R}$ is denoted by $\calF^\Delta_{Rpp_k}$. It is an abelian $R$-linear category.

\begin{notation}
\smallskip
{\rm (a)} Let $G$ be a finite group. We set $G^\natural:=\Hom(G,k^\times)$. 
The embedding $\iota\colon \mu(k)\to \mu(R)$ induces an embedding 
$$G^\natural\to \Hom(G, R^\times)\,, \lambda\mapsto \tilde{\lambda}\,.$$
For $\lambda\in G^\natural$, we denote by $k_\lambda$ the corresponding one-dimensional $kG$-module and by $kG_{\lambda}$ the $(kG,kG)$-bimodule which is equal to $kG$ as a $k$-vector space and with the action 
\begin{align*}
	g\cdot x\cdot h=\lambda(h)^{-1}gxh, \quad\text{for } g,x,h\in G\,.
\end{align*}

\smallskip
{\rm (b)} Let $(L,u)$ be a $\DD$-pair and set $G:=\Lu$. For $x\in \langle u\rangle$, we set 
\begin{equation*}
	E^G_{L,u}(x)=\frac{1}{|u|}\sum_{\lambda\in \langle u\rangle^\natural}\tilde{\lambda}(x)^{-1} kG_{\mathrm{Id},\lambda}\in R\TD(G,G)
\end{equation*}
and
\begin{equation*}
   E^G_{L,u}:= E^G_{L,u}(u)\,.
\end{equation*}
\end{notation}

\begin{lemma}\label{lem idempotens}
Let $(L,u)$ be a $\DD$-pair and set $G:=\Lu$.
The elements $E^G_{L,u}(x)$, for $x\in\langle u\rangle$ are pairwise orthogonal idempotents of $R\TD(G,G)$ and their sum is equal to the identity element $kG$. In particular, the representable functor $R\TD(-,G)$ splits as a direct sum
	\begin{align*}
		\bigoplus_{x\in \langle u\rangle}R\TD(-,G)E^G_{L,u}(x)\,.
	\end{align*}
\end{lemma}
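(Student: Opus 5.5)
The plan is to reduce everything to the multiplication rule
\[
kG_{\Id,\lambda}\otimes_{kG} kG_{\Id,\mu}\cong kG_{\Id,\lambda\mu}\qquad(\lambda,\mu\in\langle u\rangle^\natural),
\]
together with orthogonality of characters of the cyclic $p'$-group $\langle u\rangle$. First I will pin down the notation. Since $G=\Lu$ with $L$ the normal Sylow $p$-subgroup, restriction identifies $\langle u\rangle^\natural$ with $\Hom(G,k^\times)=G^\natural$, because $\Hom(G,k^\times)$ is trivial on $L$. So $kG_{\Id,\lambda}$ denotes $kG_\lambda$ with $\lambda$ viewed as a character of $G$ via $G\to G/L\cong\langle u\rangle$. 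This bimodule is free of rank one from either side. It is a $p$-permutation $k[G\times G]$-module, isomorphic to $\Ind_{\Delta G}^{G\times G}$ of a one-dimensional module, so its vertex lies in the Sylow $p$-subgroup of $\Delta G$, which is diagonal. Hence it lies in $\TD(G,G)$. The scalar $1/|u|$ makes sense because $|u|$ is a $p'$-integer and hence invertible in $R$.

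Next I will verify the multiplication rule. The map $x\otimes y\mapsto xy$ is an isomorphism $kG\otimes_{kG}kG\to kG$, and it intertwines the twisted actions: twisting on the right by $\lambda$ in the first factor and by $\mu$ in the second gives right twist $\lambda\mu$. Concretely, $g\cdot(x\otimes y)\cdot h=gx\otimes yh\,\mu(h)^{-1}$, and moving scalars across gives the combined twist $\lambda\mu$. I will be careful that the middle balancing relation $x h\lambda(h)^{-1}\otimes y = x\otimes hy$ is consistent. The cleaner route is the identification $kG_\lambda\cong kG\otimes_k k_{\lambda^{-1}}$ with diagonal action, or equivalently with the automorphism $g\mapsto\lambda(g)g$ of $kG$. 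Twisting by algebra automorphisms $\phi_\lambda(g)=\lambda(g)^{-1}g$ composes as $\phi_\lambda\phi_\mu=\phi_{\lambda\mu}$, which yields the rule.

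Then I will expand the product. Writing $n=|u|$ and using $\tilde\lambda$ for the $R$-valued lift, which is multiplicative, I get
\[
E(x)E(y)=\frac1{n^2}\sum_{\lambda,\mu}\tilde\lambda(x)^{-1}\tilde\mu(y)^{-1}kG_{\Id,\lambda\mu}
=\frac1{n^2}\sum_{\nu}\Bigl(\sum_{\lambda}\tilde\lambda(x^{-1}y)\Bigr)\tilde\nu(y)^{-1}kG_{\Id,\nu}.
\]
The key input is orthogonality, $\sum_{\lambda\in\langle u\rangle^\natural}\tilde\lambda(z)=n\,\delta_{z,1}$. This holds because $\iota$ is injective, so $\lambda\mapsto\tilde\lambda$ is an isomorphism from $\langle u\rangle^\natural$ onto the full dual of $\langle u\rangle$ in $R^\times$; there are $n$ such characters, since $k$ contains all $n$-th roots of unity. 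For $z\neq1$, choose $\lambda_0$ with $\tilde\lambda_0(z)\ne1$. Then $(1-\tilde\lambda_0(z))S=0$, and since $R$ is a domain, $S=0$. This gives $E(x)E(y)=\delta_{x,y}E(x)$. Summing over $x$ gives $\sum_xE(x)=\frac1n\sum_\lambda\bigl(\sum_x\tilde\lambda(x)^{-1}\bigr)kG_{\Id,\lambda}=kG_{\Id,1}=kG$, by the dual orthogonality relation, which is proved the same way.

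Finally, the splitting of the representable functor is formal. Right multiplication by the complete family of orthogonal idempotents $E^G_{L,u}(x)$ in $\mathrm{End}(G)=R\TD(G,G)$ decomposes $R\TD(-,G)$ as $\bigoplus_x R\TD(-,G)E^G_{L,u}(x)$, naturally in the first variable. The main obstacle I expect is the bookkeeping in the multiplication rule: the direction of the twist, and whether $\lambda\mu$ or $\lambda^{-1}\mu$ appears. A sign error there would merely permute the labels $x\leftrightarrow x^{-1}$, so I will check it directly on basis elements.
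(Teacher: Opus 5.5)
Your proposal is correct and follows the paper's proof: expand $E^G_{L,u}(x)E^G_{L,u}(y)$ using $kG_\lambda\otimes_{kG}kG_\mu\cong kG_{\lambda\mu}$, reindex by $\nu=\lambda\mu$, and apply the two orthogonality relations for the characters of $\langle u\rangle$. You also justify several steps the paper leaves implicit, and each justification is sound:
\begin{enumerate}
\item the multiplication rule, via composition of the twisting automorphisms;
\item the membership of $kG_\lambda$ in $T^\Delta(G,G)$;
\item the vanishing of $\sum_\lambda\tilde\lambda(z)$ for $z\neq 1$, using that $R$ is a domain and $\iota$ is injective.
\end{enumerate}
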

\begin{proof}
	For $x,y\in\langle u\rangle$, one has
	\begin{align*}
		E^G_{L,u}(x)\cdot E^G_{L,u}(y)&=\frac{1}{|u|^2}\sum_{\lambda,\mu \in \langle u\rangle^\natural}\tilde{\lambda}(x)^{-1} \tilde{\mu}(y)^{-1}  kG_{\lambda} kG_{\mu}\\&
		=\frac{1}{|u|^2}\sum_{\lambda,\mu \in \langle u\rangle^\natural}\tilde{\lambda}(x)^{-1} \tilde{\mu}(y)^{-1}  kG_{\lambda \mu}\,.
	\end{align*}
	Setting $\rho=\lambda \mu$, we get
	\begin{align*}
		E^G_{L,u}(x)\cdot E^G_{L,u}(y)&=\frac{1}{|u|^2}\sum_{\lambda,\rho \in \langle u\rangle^\natural}\tilde{\lambda}(x)^{-1} \tilde{\lambda}(y)\tilde{\rho}(y)^{-1}  kG_{\rho}\\&
		=\frac{1}{|u|^2}\sum_{\rho \in \langle u\rangle^\natural} \left(\sum_{\lambda\in \langle u\rangle^\natural} \tilde{\lambda}(x)^{-1} \tilde{\lambda}(y)\right)  \tilde{\rho}(y)^{-1}  kG_{\rho}\,.
	\end{align*}
	The inner sum is equal to $|u|$ if $x=y$, and equal to zero otherwise. This shows that the elements $E^G_{L,u}$, for $x\in\langle u\rangle$ are pairwise orthogonal idempotents of $R\TD(G,G)$. Now the fact that their sum is equal to the identity element $kG$ follows from the observation that for any $\lambda\in\langle u\rangle^\natural$, one has
	\begin{align*}
		\frac{1}{|u|}\sum_{x\in\langle u\rangle}\tilde{\lambda}(x)^{-1}=\begin{cases}
			1, & \text{if } \lambda=1\\
			0,& \text{otherwise }.
		\end{cases}
	\end{align*}
\end{proof}

\begin{proposition}\label{prop compactandprojective}
	Let $(L,u)$ be a $\DD$-pair. Then the functor $R\TD(-,\Lu)E_{L,u}$ is compact and projective in $\calF_{R pp_k}^\Delta$. 
\end{proposition}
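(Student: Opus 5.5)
The plan is to realize $R\TD(-,\Lu)E_{L,u}$ as a direct summand of a representable functor and then use the Yoneda lemma. Set $G:=\Lu$. The representable functor $R\TD(-,G)=\Hom_{Rpp_k^\Delta}(G,-)$ is projective in $\calF^\Delta_{Rpp_k}$. Indeed, by Yoneda, for any $F\in\calF^\Delta_{Rpp_k}$ we have
\[
\Hom_{\calF^\Delta_{Rpp_k}}\bigl(R\TD(-,G),F\bigr)\cong F(G)
\]
naturally in $F$. Exactness in $\calF^\Delta_{Rpp_k}$ is checked objectwise, so evaluation at $G$ is exact, and the representable functor is projective.

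It is also compact, meaning that $\Hom(R\TD(-,G),-)$ commutes with arbitrary direct sums (or filtered colimits). Again this follows from Yoneda, since direct sums of functors are computed pointwise:
\[
\Hom\Bigl(R\TD(-,G),\bigoplus_i F_i\Bigr)\cong\bigoplus_i F_i(G)\cong\bigoplus_i\Hom\bigl(R\TD(-,G),F_i\bigr).
\]
If one prefers to view compactness as finite generation, $R\TD(-,G)$ is generated by the single element $kG\in R\TD(G,G)$.

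Next I use Lemma~\ref{lem idempotens}. The element $E_{L,u}=E^G_{L,u}(u)$ is an idempotent of the endomorphism ring $R\TD(G,G)$. Right multiplication by $E_{L,u}$ is an idempotent endomorphism of $R\TD(-,G)$: it is a natural transformation because composition in $Rpp_k^\Delta$ is associative. Its image is $R\TD(-,G)E_{L,u}$, and the lemma gives the splitting
\[
R\TD(-,G)=R\TD(-,G)E_{L,u}\oplus R\TD(-,G)(kG-E_{L,u}).
\]
Equivalently, this direct summand is the corresponding summand of the splitting over all $x\in\langle u\rangle$ in Lemma~\ref{lem idempotens}.

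Direct summands of projective objects are projective, and direct summands of compact objects are compact, because $\Hom(P\oplus Q,-)=\Hom(P,-)\oplus\Hom(Q,-)$ and a direct summand of a map that is an isomorphism is again an isomorphism. Concretely, Yoneda gives $\Hom(R\TD(-,G)E_{L,u},F)\cong E_{L,u}F(G)$, where $E_{L,u}$ acts on $F(G)$ through $F$. This functor of $F$ is visibly exact and commutes with direct sums, which proves both properties at once.

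There is no serious obstacle here. The only point requiring care is that the definition of $E_{L,u}$ uses $\frac{1}{|u|}$ and the values $\tilde\lambda(x)$. Both lie in $R$ by the standing assumptions on $R$: $|u|$ is a $p'$-integer, hence invertible, and $R$ contains the relevant $p'$-roots of unity. Lemma~\ref{lem idempotens} has already used these assumptions, so its conclusion can be taken as given.
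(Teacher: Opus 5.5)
Your proof is correct and takes the same approach as the paper. The paper's proof only defers to Lemma~6.3 of Bouc--Y\i lmaz (2022), which likewise shows that $R\TD(-,\Lu)E_{L,u}$ is the direct summand of the representable functor $R\TD(-,\Lu)=\Hom_{Rpp_k^\Delta}(\Lu,-)$ cut out by the idempotent $E_{L,u}$ of Lemma~\ref{lem idempotens}, and then applies Yoneda to get projectivity and compactness.
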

\begin{proof}
	The proof is similar to the proof of Lemma~6.3 in \cite{BoucYilmaz2022}.
\end{proof}

\begin{proposition}\label{prop generatorsoffuncat}
	Every functor in $\calF_{R pp_k}^\Delta$ is a quotient of a direct sum of representable functors $R\TD(-,\Lu)E_{L,u}$.
\end{proposition}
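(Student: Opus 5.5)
By the Yoneda lemma, for any $F\in\calF^\Delta_{Rpp_k}$ and any finite group $G$ we have $F(G)\cong\Hom(R\TD(-,G),F)$. So $F$ is a quotient of $\bigoplus_{G}\bigoplus_{m\in F(G)}R\TD(-,G)$, the map sending the copy indexed by $m$ to $F$ via $m$. It therefore suffices to show the following: every representable functor $R\TD(-,G)$ is a quotient of a direct sum of functors $R\TD(-,\Lu)E_{L,u}$. Equivalently, for each $G$ the identity $kG\in R\TD(G,G)$ should lie in the $R$-span of morphisms factoring as $a\circ e\circ b$, with $e=E_{L,u}$ for some $\DD$-pair $(L,u)$, $a\in R\TD(G,\Lu)$ and $b\in R\TD(\Lu,G)$. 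Indeed, if $kG=\sum_i a_iE_{L_i,u_i}b_i$, then the maps $R\TD(-,L_i\langle u_i\rangle)E_{L_i,u_i}\to R\TD(-,G)$ given by $x\mapsto x\,b'_i$, where $b'_i=E_{L_i,u_i}b_i$, have images whose sum contains $kG$ and hence everything.

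To obtain such a decomposition of $kG$ I would use the known structure of $R\TD(G,G)$ from \cite{BoucYilmaz2020,BoucYilmaz2022}. There, under the present assumptions on $R$, the identity of $G$ decomposes as a sum of idempotents, and each piece factors through groups of the form $\Lu=L\rtimes\langle u\rangle$, with $L$ a $p$-group and $u$ a $p'$-automorphism. This is the analogue of the fact, used in the proof of the classification of simple diagonal $p$-permutation functors, that every such functor is generated by its evaluations at groups $\Lu$; it comes from the Brauer construction and the pairs $(P,s)$ with $s$ a $p'$-element of $N_G(P)$.

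Concretely, the plan has three steps. First, show that $kG$ is an $R$-linear combination of compositions $\Ind\circ\,\cdot\circ\Res$ through subgroups $P\langle s\rangle\le G$ with $P$ a $p$-subgroup and $s$ a $p'$-element normalizing $P$. This would go via an inversion formula, in which species of $R\TD(G,G)$ are indexed by such pairs and the idempotents are expressed through them; alternatively one can cite the relevant lemma in \cite{BoucYilmaz2022}, whose Lemma~6.3 is already invoked for Proposition~\ref{prop compactandprojective}. Second, on $H=\Lu$ (with $L=P$, $u=i_s|_P$, which passes through a quotient by $C_{\langle s\rangle}(P)$ by deflation/inflation), decompose $kH=\sum_{x\in\langle u\rangle}E^H_{L,u}(x)$ using Lemma~\ref{lem idempotens}. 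Third, show that each $E^H_{L,u}(x)$ with $x\ne u$ either factors through a smaller $\DD$-pair, namely $(L,x')$ where $x'$ is a generator of $\langle x\rangle$, via the inclusion $L\langle x\rangle\le \Lu$, or is itself conjugate to $E_{L,x'}$. Induction on $|L\langle u\rangle|$ then finishes the argument.

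The main obstacle is the first step, namely showing that every idempotent piece of $kG$ factors through some $\Lu$ and that only the component $E_{L,u}=E_{L,u}(u)$ is needed rather than all $E(x)$. For this I would rely on the species/Brauer-character description from \cite{BoucYilmaz2020}. The key fact is that a morphism in $R\TD(-,G)$ is detected by its values at pairs $(P,s)$. One then shows that the subfunctor generated by images of $R\TD(-,\Lu)E_{L,u}$ has full species, so it must be all of $R\TD(-,G)$.
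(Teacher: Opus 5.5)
Your reduction is correct: $R\TD(-,G)$ is a quotient of a sum of functors $R\TD(-,\Lu)E_{L,u}$ as soon as $kG$ lies in the $R$-span of composites $a\circ E_{L,u}\circ b$. Your Step~3 is also correct, and it needs no induction. Take $x\in\langle u\rangle$ and $H:=L\langle x\rangle\le G:=\Lu$, group the $\lambda\in\langle u\rangle^\natural$ by their restriction $\mu$ to $\langle x\rangle$, and use $\sum_{\lambda|_{\langle x\rangle}=\mu}kG_\lambda=kG\otimes_{kH}kH_\mu\otimes_{kH}kG$ in $R\TD(G,G)$, with $\mu$ inflated to $H$. This gives $E^G_{L,u}(x)=\frac{|x|}{|u|}\,\Ind_H^G\circ E^H_{L,x}\circ\Res^G_H$, which equals $E^G_{L,x}$ when $x$ generates $\langle u\rangle$. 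The paper obtains this part by citing the vanishing of $E^G_{L,u}(x)$ in the essential algebra.

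The gap is Step~1, together with Step~2: the passage from an arbitrary $G$ to groups of the form $\Lu$. This is the real content of the proposition, and you only sketch it. The mechanism you propose fails under the standing hypotheses. $R$ is merely an integral domain in which $p'$-integers are invertible, so $p$ need not be invertible, and $R$ may even have characteristic $p$ (e.g.\ $R=k$). Then species do not detect elements of $R\TD(G,G)$. Already $y=[kC_p\otimes_k kC_p]\in R\TD(C_p,C_p)$ satisfies $y\circ y=p\,y$, so it is a non-zero nilpotent when $\mathrm{char}\,R=p$. Even in characteristic $0$, agreement of species only puts $c\cdot kG$ in your left ideal for some non-zero $c\in R$. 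Nothing you propose prevents $c$ from being divisible by $p$, since species inversion formulas have denominators such as normalizer orders.

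Step~2 is also incomplete as stated. $C=C_{\langle s\rangle}(P)$ is central in $H=P\langle s\rangle$, and $kH$ splits into the summands $kHe_\chi$, $\chi\in C^\natural$, with $e_\chi\in kC$ the primitive idempotents. Only $kHe_1$ equals $\Inf^H_{H/C}\circ\Def^H_{H/C}$; the other summands need an extra twist by $kH_{\tilde\chi}$, where $\tilde\chi$ extends $\chi$ to $H$.

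The paper avoids your Step~1, including its stronger requirement that the factorization go through subgroups via $\Ind$ and $\Res$. It argues by induction on $|G|$, using the essential algebra $\calE_R(G)$, which is $R\TD(G,G)$ modulo morphisms factoring through groups of smaller order. If $\calE_R(G)=0$, then $kG$ is a sum of composites through arbitrary smaller groups, and induction applies. If $\calE_R(G)\neq 0$, the Bouc--Y\i lmaz criterion forces $G\cong\Lu$, and Lemma~\ref{lem idempotens} together with your Step~3 finishes the proof. That criterion, the input behind Proposition~6.4 of \cite{BoucYilmaz2022} rather than Lemma~6.3, is what your proposal is missing.
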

\begin{proof}
	The proof is similar to the proof of Proposition~6.4 in \cite{BoucYilmaz2022}. We add some details for convenience. 
	
	Since representable functors generate $\calF_{R pp_k}^\Delta$, it suffices to prove the statement for representable functors $R\TD(-,G)$. By induction on $|G|$, we can assume that the essential algebra $\calE_R(G)\neq 0$, i.e., $G=\Lu$ for some $\DD$-pair $(L,u)$. 
	
	Now by Lemma~\ref{lem idempotens}, the identity element $kG$ is equal to the sum of idempotents $E^G_{L,u}(x)$ for $x\in\langle u\rangle$. If $x$ does not generate $\langle u\rangle$, then
	\begin{align*}
		E^G_{L,u}(x)=\frac{1}{|u|}\sum_{\mu\in \langle x\rangle^\natural} \tilde{\mu}(x)^{-1}\Biggl(\sum_{\substack{\lambda\in\langle u\rangle^\natural\\ \Res^{\langle u\rangle}_{\langle x\rangle}\lambda =\mu}} kG_{\lambda}\Biggr)
	\end{align*}
	vanishes in $\calE_R(G)$ by the proof of Theorem~4.9 in \cite{BoucYilmaz2026}. If $x$ generates $\langle u\rangle$, then the functors $R\TD(-,G)E^G_{L,u}(x)$ and $R\TD(-,G)E^{G}_{L,x}$ are equal since $E^G_{L,u}(x)=E^{G}_{L,x}$. The result follows.
\end{proof}

\begin{definition}
	Let $R\calD^\Delta$ denote the following category:
	\noindent\begin{itemize}
		\item The objects are $\DD$-pairs.
		\item $\Hom_{R\calD^\Delta}\left((L,u),(M,v)\right)=E_{M,v}R T^\Delta(\Mv,\Lu)E_{L,u}\,.$
		\item The composition is induced from tensor product of bimodules.
		\item $\Id_{L,u}=E_{L,u}k\Lu E_{L,u}\,.$
	\end{itemize}
\end{definition}

\begin{corollary}\label{cor equivalenceoffuncats}
	The category $\calF_{R pp_k}^\Delta$ of diagonal $p$-permutation functors over $R$ is equivalent to the category of $R$-linear functors from $R\calD^\Delta$ to $\lMod{R}$.
\end{corollary}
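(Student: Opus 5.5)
This is the standard Morita-type argument for a full subcategory of an additive category generated by a family of projective generators.

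First, consider the $R$-linear functor $\Phi\colon R\calD^\Delta\to \calF^\Delta_{Rpp_k}$ defined as follows.
\begin{itemize}
\item On objects, $\Phi$ sends $(L,u)$ to $P_{L,u}:=R\TD(-,\Lu)E_{L,u}$.
\item On morphisms, an element $a\in E_{M,v}RT^\Delta(\Mv,\Lu)E_{L,u}$ is sent to right multiplication by $a$, which is a natural transformation $P_{M,v}\to P_{L,u}$.
\end{itemize}
Right multiplication reverses the order of composition, so $\Phi$ is naturally a contravariant functor (or a covariant functor from the opposite category). One has to keep track of this convention so that the final statement reads with covariant functors on $R\calD^\Delta$.

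By the Yoneda lemma for $R$-linear categories, for idempotents $e\in\mathrm{End}(G)$ and $f\in\mathrm{End}(H)$,
\[
\Hom_{\calF^\Delta_{Rpp_k}}\big(R\TD(-,H)f,\,R\TD(-,G)e\big)\cong f\,R\TD(H,G)\,e .
\]
This identification gives the correct Hom-module up to the variance convention, and it is compatible with composition. Hence $\Phi$ is fully faithful: it identifies $R\calD^\Delta$, or its opposite, with the full subcategory $\calP$ of $\calF^\Delta_{Rpp_k}$ whose objects are the $P_{L,u}$.

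Next, combine Proposition~\ref{prop compactandprojective} and Proposition~\ref{prop generatorsoffuncat}. These show that $\{P_{L,u}\}$ is a set of compact projective generators of the abelian category $\calF^\Delta_{Rpp_k}$. The generalized Morita theorem (Freyd/Mitchell: a cocomplete abelian category with a set of small projective generators is equivalent to additive functors on the full subcategory they span) then gives an equivalence
\[
F\longmapsto \big((L,u)\mapsto \Hom(P_{L,u},F)\big).
\]
By Yoneda, $\Hom(P_{L,u},F)\cong F(\Lu)E_{L,u}$, more precisely $F(E_{L,u})F(\Lu)$ with the functor's own action. The right-hand side is a covariant $R$-linear functor on $R\calD^\Delta$, with $a\in\Hom_{R\calD^\Delta}((L,u),(M,v))$ acting through $F(a)$. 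So the equivalence is $F\mapsto\big((L,u)\mapsto F(E_{L,u})F(\Lu)\big)$, and the variance works out without any further convention.

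It remains to check that the functor $F\mapsto \Hom(\bigoplus P_{L,u},F)$ is an equivalence. I would verify this directly, in three steps:
\begin{itemize}
\item It is exact, because each $P_{L,u}$ is projective.
\item It commutes with direct sums, because each $P_{L,u}$ is compact.
\item It is fully faithful. This follows by presenting any $F$ as a cokernel of a map between sums of $P$'s, using Proposition~\ref{prop generatorsoffuncat} twice (once for $F$, and once for the kernel of the resulting surjection), and then using the five lemma.
\end{itemize}
Essential surjectivity uses the inverse functor $G\mapsto \bigoplus_{(L,u)} P_{L,u}\otimes_{R\calD^\Delta} G$, that is, the left Kan extension along $\Phi$. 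On representable functors it returns the representable itself, and it is right exact, so the unit and counit are isomorphisms.

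The main obstacle is only bookkeeping: getting the variance and the left/right multiplication conventions consistent, so that $R\calD^\Delta$ rather than its opposite appears. The rest is standard, as in \cite[Section~6]{BoucYilmaz2022}.
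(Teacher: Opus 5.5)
Your proposal is correct and follows the same route as the paper. The paper's proof simply combines Propositions~\ref{prop compactandprojective} and~\ref{prop generatorsoffuncat} with the general Morita-type result \cite[Theorem~5.1]{BoucYilmaz2022} for a family of compact projective generators; you instead reprove that cited theorem in-line, via the Yoneda identification of $\Hom(P_{M,v},P_{L,u})$ with $E_{M,v}RT^\Delta(\Mv,\Lu)E_{L,u}$ (which gives $(R\calD^\Delta)^{\mathrm{op}}$) and the standard exactness/compactness/presentation argument, and your variance bookkeeping is right.
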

\begin{proof}
	This follows from Propositions~\ref{prop compactandprojective}, \ref{prop generatorsoffuncat} and \cite[Theorem~5.1]{BoucYilmaz2022}.
\end{proof}

Our goal is to prove
that the category $\calF_{Rpp_k}^\Delta$ of diagonal $p$-permutation functors over $R$ is
equivalent to the category of $R$-linear functors on a quotient of the
category $R\calP^\Delta$ of diagonal $\DD$-pair bisets over $R$.

We will prove this by constructing an ideal $\calI^\Delta$ in
$R\calP^\Delta$ and by showing that the quotient category
\[
\overline{R\calP^\Delta}:=R\calP^\Delta/\calI^\Delta
\]
is isomorphic to $R\calD^\Delta$. The remainder of the paper is devoted to 
the
construction of this isomorphism. We begin in the next section by constructing a convenient basis for
\[
E_{M,v}R T^\Delta(\Mv,\Lu)E_{L,u}.
\]

%%%%%%%%%%%%%%% SECTION 9 %%%%%%%%%%%%%%%%%%%%%%%%%%%%%

\section{A basis}

Throughout this section we fix two $\DD$-pairs $(M,v)$ and $(L,u)$. The goal of this section is to give an $R$-basis for $E_{M,v}RT^\Delta(\Mv,\Lu)E_{L,u}$.

Recall that, for a finite group $G$, we denote by $T(G)$ the trivial source ring of $G$, i.e., the Grothendieck group of $p$-permutation $kG$-modules with respect to direct sums and that $RT(G)=R\otimes_\ZZ T(G)$.

\begin{notation}
(a) For any finite group $G$, we denote by $RG^\natural$ the $R$-submodule of $RT(G)$ spanned by the elements $k_\lambda$, $\lambda\in G^\natural$. Moreover we set
\begin{equation*}
   \chi_g:=\chi_g^G:=\frac{1}{|G^\natural|} \sum_{\lambda\in G^\natural} \tilde{\lambda}(g^{-1}) k_\lambda\in RG^\natural\subseteq RT(G)\,.
\end{equation*}

\smallskip
(b) Let $Q=\Delta(Y,\alpha,X)\le M\times L$ be a diagonal subgroup which is invariant by $(v,u)$. We define
\begin{equation*}
   \Psi^{M,v}_{L,u}(Q):=
   \frac{|v|}{\gcd(|v|,|u|)}
   \Ind^{\Mv\times\Lu}_{Q\cdot \langle (v,u)\rangle} (\chi_{(v,u)}^{Q\cdot\langle (v,u)\rangle})  \in RT^\Delta(\Mv,\Lu))\,.
\end{equation*}
\end{notation}

\begin{remark}\label{rem chi}
Let $G$ be a normal Sylow $p$-subgroup $P$ with abelian quotient $G/P$. 

\smallskip
(a) Clearly $|G^\natural|=[G:P]=|G|_{p'}$ and one has an $R$-module isomorphism
\begin{equation*}
   RG^\natural \longrightarrow F_P(G,R)\,, \quad k_\lambda\mapsto\tilde{\lambda}\,,
\end{equation*}
between the $R$-scalar extension of the Grothendieck group $T(G)$ of $p$-permutation $kG$-modules and the $R$-module of functions from $G$ to $R$ which are constant on cosets $gP\in G/P$. 
It is easy to verify that under the above isomorphism the element $\chi^G_{g}$ corresponds to the characteristic function on the coset $gP$.

(b) If $S$ is a complement of $P$ in $G$, one has  has group isomorphisms $(G/P)^\natural \to G^\natural$ and $G^\natural \to S^\natural$ given by inflation and restriction. They induce $R$-module isomorphisms
\begin{equation*}
   \Inf_{G/P}^G\colon R(G/P)^\natural\to RG^\natural\quad\text{and}\quad \Res^G_S\colon RG^\natural\to RS^\natural\,.
\end{equation*}

\smallskip
(c) Let $H$ be a subgroup of $G$. Then $Q:=H\cap P$ is a normal Sylow $p$-subgroup of $H$ and $H/Q\cong HP/P$ is abelian. Moreover, the group homomorphism $G^\natural\to H^\natural$, $\lambda\mapsto \lambda|_H$, is surjective and every element $\mu\in H^\natural$ has precisely $[G:H]_{p'}=|G^\natural|/|H^\natural|$ preimages. This implies that, for all $g\in G$, one has
\begin{equation*}
   \Res^G_H(\chi^G_g)=\begin{cases} \chi^H_h\,,& \text{if $g\in PH$,}\\ 0\,, & \text{otherwise} \end{cases}
\end{equation*}
where $h\in H$ is such that $hP=gP$, and that
\begin{equation*}
   \Ind_H^G(\chi^H_h)= \Ind_H^G(\Res^G_H(\chi^G_h)) = \chi^G_h\cdot \Ind_H^G(k)\,,
\end{equation*}
for all $h\in H$. 

\smallskip
(d) It is a straightforward computation that, for any $\theta\in G^\natural$ and $g\in G$, one has
\begin{equation*}
   k_\theta\cdot \chi_g = \tilde{\theta}(g)\cdot \chi_g\,.
\end{equation*}

\smallskip
(e) Note that Parts~(a)--(c) apply to the groups $\Lu$, $\Mv$, and $\Mv\times \Lu$. In particular, if $Q=\Delta(Y,\alpha,X)\le M\times L$ is a diagonal subgroup which is invariant by $(v,u)$ then
\begin{align*}
   \Psi^{M,v}_{L,u}(Q) &= \frac{|v|}{\gcd(|v|,|u|)} \Ind^{\Mv\times\Lu}_{Q\cdot \langle (v,u)\rangle} (\chi_{(v,u)}^{Q\cdot\langle(v,u)\rangle}) \\
   & = \frac{|v|}{\gcd(|v|,|u|)} \chi_{(v,u)}^{\Mv\times \Lu}\cdot \Ind^{\Mv\times\Lu}_{Q\cdot \langle (v,u)\rangle}(k)\,.
\end{align*}
\end{remark}

\begin{lemma}\label{lem twistbycharacters}
Let $W$ be a $(k\Mv,k\Lu)$-bimodule, let $\mu\in\langle v\rangle^\natural$ and let $\lambda\in\langle u\rangle^\natural$. Let also $W_{\mu,\lambda}$ be the $(k\Mv,k\Lu)$-bimodule equal to $W$ as a $k$-vector space and with the action
\begin{align*}
b\cdot w\cdot a=\mu(b)\lambda(a)^{-1}bwa\,,
\end{align*}
for $b\in\Mv, a\in \Lu$ and $w\in W$.
Then we have an isomorphism 
\begin{align*}
\left(k\Mv\right)_\mu\otimes_{k\Mv}W\otimes_{k\Lu}\left(k\Lu\right)_\lambda\cong W_{\mu,\lambda}
\end{align*}
of $(k\Mv,k\Lu)$-bimodules.
\end{lemma}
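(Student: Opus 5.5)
The plan is to write down an explicit map and check it directly. The natural candidate is
\[
\Phi\colon \left(k\Mv\right)_\mu\otimes_{k\Mv}W\otimes_{k\Lu}\left(k\Lu\right)_\lambda\longrightarrow W_{\mu,\lambda},\qquad x\otimes w\otimes y\mapsto \mu(x)\lambda(y)^{-1}\,xwy,
\]
for $x\in\Mv$, $y\in\Lu$ and $w\in W$, extended $k$-linearly. Here $\mu$ and $\lambda$ are regarded as characters of $\Mv$ and $\Lu$ by inflation through $\Mv\to\langle v\rangle$ and $\Lu\to\langle u\rangle$; this is the convention implicit in the notation $kG_{\mathrm{Id},\lambda}$.

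First, the twisted bimodules need to be read consistently. By the Notation, $kG_\lambda$ is $kG$ with the right action twisted by $\lambda^{-1}$. The left factor $(k\Mv)_\mu$ must then carry its twist so that the result is the stated action $\mu(b)$ on the left. Under the reading in which $b\cdot x\cdot b'=\mu(b)\,bxb'$ (the twist on the left), the map above is balanced and the computation closes. If instead the twist sits on the right, the same argument works after replacing $\mu$ by $\mu^{-1}$ in $\Phi$. I will fix the left-twisted reading and note that the other convention is handled symmetrically.

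The verification then goes in three steps:
\begin{enumerate}[(1)]
\item \emph{Well-definedness.} Check that $\Phi$ is balanced over $k\Mv$ and over $k\Lu$. For $m\in\Mv$, compare $x m\otimes w$ with $x\otimes mw$, where $xm$ is formed in the twisted bimodule and the right action on $(k\Mv)_\mu$ is ordinary multiplication. Both images equal $\mu(xm)\lambda(y)^{-1}xmwy$, up to the factor $\mu(m)$, and multiplicativity of $\mu$ makes these factors agree. The check over $k\Lu$ is the same, using the $\lambda^{-1}$-twisted left action on $(k\Lu)_\lambda$ and the multiplicativity of $\lambda$.
\item \emph{Bimodule homomorphism.} For $b\in\Mv$, we have $b\cdot(x\otimes w\otimes y)=\mu(b)\,bx\otimes w\otimes y$, which maps to $\mu(b)\mu(bx)\lambda(y)^{-1}bxwy$. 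This should equal $b\cdot\Phi(x\otimes w\otimes y)=\mu(b)\,b\,\mu(x)\lambda(y)^{-1}xwy$ computed in $W_{\mu,\lambda}$. The two differ by a factor $\mu(b)$, which signals that the twist on the source element must be chosen so that the factors cancel. The right convention here is forced by matching exponents, and I will adjust the sign of the exponent in $\Phi$ (that is, use $\mu(x)^{\pm1}$) to make the two sides agree. The right action is checked the same way.
\item \emph{Bijectivity.} Use the canonical isomorphisms $kG\otimes_{kG}W\cong W$ and $W\otimes_{kG}kG\cong W$ of vector spaces, given by $x\otimes w\otimes y\mapsto xwy$. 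The map $\Phi$ is this isomorphism rescaled by nonzero scalars on spanning elements. An explicit inverse is $w\mapsto 1\otimes w\otimes 1$, which shows $\Phi$ is bijective.
\end{enumerate}

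The main obstacle is purely bookkeeping: the paper only defines the right-twisted bimodule $kG_\lambda$, so the exponents of $\mu$ and $\lambda$ in $\Phi$ must be pinned down so that both balancing and equivariance hold at once. I will settle this by testing the conditions on generators $x=b\in\Mv$, $y=a\in\Lu$ with $w$ arbitrary. Once the exponents are correct, every identity reduces to $\mu$ and $\lambda$ being group homomorphisms.
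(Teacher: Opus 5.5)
There is a gap: your strategy (explicit map, balancing, equivariance, inverse $w\mapsto 1\otimes w\otimes 1$) is the paper's, but the map you write down is not well defined in any of the readings you consider, and steps (1) and (2) do not prove what they claim.

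\begin{itemize}
\item \textbf{Paper's convention.} Both $(k\Mv)_\mu$ and $(k\Lu)_\lambda$ have the ordinary left action, and right action twisted by $\mu^{-1}$, resp.\ $\lambda^{-1}$. The relation $x\otimes wa\otimes y=x\otimes w\otimes ay$ then forces the scalar attached to $x\otimes w\otimes y$ to be independent of $y$. Your $\Phi$ instead sends the two sides to $\mu(x)\lambda(y)^{-1}xway$ and $\mu(x)\lambda(a)^{-1}\lambda(y)^{-1}xway$.
\item \textbf{Your left-twisted reading of $(k\Mv)_\mu$.} Here the relation $xm\otimes w=x\otimes mw$ forces the scalar to be independent of $x$. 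Your two images $\mu(xm)\lambda(y)^{-1}xmwy$ and $\mu(x)\lambda(y)^{-1}xmwy$ differ by $\mu(m)$; multiplicativity of $\mu$ shows exactly that they do \emph{not} agree.
\item \textbf{Step (2).} The stray factor $\mu(b)$ you detect is the same defect. Switching to $\mu(x)^{-1}$ does not remove it: the two sides are again off by $\mu(b)$. In the left-twisted reading the only exponent that works is $0$.
\item \textbf{Right-twisted reading.} Your remark that one should then use $\mu^{-1}$ is backwards: there $\mu(x)$ is forced.
\item \textbf{The $\lambda$ side.} The ``$\lambda^{-1}$-twisted left action on $(k\Lu)_\lambda$'' in step (1) contradicts your own reading of the Notation. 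With that action the tensor product would be $W_{\mu,\lambda^{-1}}$, not $W_{\mu,\lambda}$.
\end{itemize}

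The lemma consists of nothing but this bookkeeping, so leaving the exponents to be ``adjusted'' leaves the proof undone.

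The repair is short. In the paper's convention, $b=\mu(b)\,(1\cdot b)$ in $(k\Mv)_\mu$ and $1\cdot a=\lambda(a)^{-1}a$ in $(k\Lu)_\lambda$. Hence $x\otimes w\otimes y=\mu(x)\,(1\otimes xwy\otimes 1)$. The correct map is therefore $x\otimes w\otimes y\mapsto\mu(x)\,xwy$, equivalently $1\otimes w\otimes 1\mapsto w$ as in the paper, with no $\lambda$-factor at all. The identity $b\,(1\otimes w\otimes 1)\,a=\mu(b)\lambda(a)^{-1}\,(1\otimes bwa\otimes 1)$ then gives equivariance directly. Your step (3) goes through unchanged for this map.
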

\begin{proof}
The map 
\begin{align*}
\varphi: \left(k\Mv\right)_\mu\otimes_{k\Mv}W\otimes_{k\Lu}\left(k\Lu\right)_\lambda &\to W_{\mu,\lambda}\\
1\otimes w\otimes 1&\mapsto w
\end{align*}
is clearly an isomorphism of $k$-vector spaces. For $b\in \Mv, a\in\Lu$ and $w\in W$, one has
\begin{align*}
\varphi(b\cdot (1\otimes w\otimes 1)\cdot a) &=\varphi((b\cdot 1)\otimes w\otimes (1\cdot a))=\mu(b)\lambda(a)^{-1} \varphi((1\cdot b\otimes w\otimes a\cdot 1))\\&
=\mu(b)\lambda(a)^{-1}\varphi((1\otimes bwa\otimes 1))\\&
=\mu(b)\lambda(a)^{-1}bwa\,.
\end{align*}
This proves that the map $\varphi$ is an isomorphism of $(k\Mv,k\Lu)$-bimodules.
\end{proof}

\begin{lemma}\label{lem twistbycharactersofinduced}
Let $H\le \Mv\times\Lu$ and let $\theta\in H^\natural$. Moreover, let $\mu\in\Mv^\natural$ and $\lambda\in\Lu^\natural$. Then the $(k\Mv,k\Lu)$-bimodule
\begin{equation*}
\left(k\Mv\right)_\mu\otimes_{k\Mv} \Ind^{\Mv\times\Lu}_{H}(k_\theta) \otimes_{k\Lu}\left(k\Lu\right)_\lambda
\end{equation*}
when viewed as $k(\Mv\times\Lu)$-module is isomorphic to
\begin{equation*}
\Ind^{\Mv\times\Lu}_H(k_{\theta(\mu\times\lambda)|_H})\,.
\end{equation*}
\end{lemma}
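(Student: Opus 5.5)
The plan is to reduce the statement to Lemma~\ref{lem twistbycharacters} and then to the elementary compatibility of induction with tensoring by a one-dimensional module. Put $W:=\Ind^{\Mv\times\Lu}_{H}(k_\theta)$, viewed as a $(k\Mv,k\Lu)$-bimodule by the usual convention $b\cdot w\cdot a=(b,a^{-1})w$.

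First I note that the proof of Lemma~\ref{lem twistbycharacters} never uses that $\mu$ and $\lambda$ factor through $\langle v\rangle$ and $\langle u\rangle$. The map $1\otimes w\otimes 1\mapsto w$ and the computation there work verbatim for arbitrary $\mu\in\Mv^\natural$ and $\lambda\in\Lu^\natural$. So I will first observe, or restate, that lemma in this generality. This gives
\[
(k\Mv)_\mu\otimes_{k\Mv}W\otimes_{k\Lu}(k\Lu)_\lambda\cong W_{\mu,\lambda}.
\]

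Next I translate $W_{\mu,\lambda}$ into a left $k(\Mv\times\Lu)$-module. For $(b,a)\in\Mv\times\Lu$, the element $(b,a)$ acts on $W_{\mu,\lambda}$ as $b\cdot w\cdot a^{-1}$ in the twisted structure, that is, by
\[
\mu(b)\lambda(a^{-1})^{-1}\,(b,a)w=\mu(b)\lambda(a)\,(b,a)w .
\]
Hence $W_{\mu,\lambda}\cong W\otimes_k k_{\mu\times\lambda}$ as $k(\Mv\times\Lu)$-modules, with diagonal action, where $(\mu\times\lambda)(b,a)=\mu(b)\lambda(a)$. The one point needing care is this sign convention, namely that the right twist $\lambda(a)^{-1}$ becomes $\lambda(a)$ after passing to $a^{-1}$. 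I expect this bookkeeping to be the only real obstacle.

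Finally I apply the standard isomorphism $\Ind_H^G(V)\otimes_k U\cong\Ind_H^G(V\otimes_k\Res^G_H U)$ for a $kH$-module $V$ and a $kG$-module $U$. This isomorphism is given by $g\otimes v\otimes u\mapsto g\otimes(v\otimes g^{-1}u)$. With $V=k_\theta$ and $U=k_{\mu\times\lambda}$, we have $k_\theta\otimes\Res_H k_{\mu\times\lambda}=k_{\theta(\mu\times\lambda)|_H}$. This yields the claimed isomorphism with $\Ind^{\Mv\times\Lu}_H\bigl(k_{\theta(\mu\times\lambda)|_H}\bigr)$.
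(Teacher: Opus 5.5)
Your proposal is correct and follows the paper's own proof: apply Lemma~\ref{lem twistbycharacters} to $W=\Ind^{\Mv\times\Lu}_H(k_\theta)$, identify the twisted bimodule with $k_{\mu\times\lambda}\otimes_k W$ as a $k(\Mv\times\Lu)$-module, and finish with the standard isomorphism $\Ind_H^G(V)\otimes_k U\cong\Ind_H^G(V\otimes_k\Res^G_H U)$. Your two additions only make explicit what the paper leaves implicit: the remark that Lemma~\ref{lem twistbycharacters} holds for arbitrary $\mu\in\Mv^\natural$ and $\lambda\in\Lu^\natural$ (harmless, since such characters are trivial on the $p$-groups $M$ and $L$ anyway), and the sign bookkeeping turning $\lambda(a)^{-1}$ into $\lambda(a)$.
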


\begin{proof}
By Lemma~\ref{lem twistbycharacters}, the bimodule
\begin{align*}
\left(k\Mv\right)_\mu\otimes_{k\Mv} \Ind^{\Mv\times\Lu}_{H} (k_\theta) \otimes_{k\Lu}\left(k\Lu\right)_\lambda
\end{align*}
when viewed as $k(\Mv\times\Lu)$-module is isomorphic to
\begin{equation*}
k_{\mu\times\lambda}\otimes_k \Ind_H^{\Mv\times\Lu}(k_\theta) \cong \Ind^{\Mv\times\Lu}_H(k_{\theta(\mu\times\lambda)|_H})\,.
\end{equation*}
\end{proof}

\begin{lemma}\label{lem theproductwithidempotents}
Let $H\le \Mv\times \Lu$ and let $\theta\in H^\natural$. Then
\begin{align*}
E_{M,v}\left(\Ind^{\Mv\times\Lu}_H (k_\theta)\right)E_{L,u}
=\begin{cases}
\tilde{\theta}(h)\cdot\Ind^{\Mv\times\Lu}_H (\chi_{h})& \text{if } (v,u)\in (M\times L) H\\
0& \text{if } (v,u)\notin (M\times L)H\,,
\end{cases}
\end{align*}
where $h\in H$ is such that $h\in (M\times L)(v,u)$.
\end{lemma}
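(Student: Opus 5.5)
Expand both idempotents and use Lemma~\ref{lem twistbycharactersofinduced}. Write $G:=\Mv\times\Lu$ and $P:=M\times L$, the normal Sylow $p$-subgroup of $G$ with abelian quotient $G/P\cong\langle v\rangle\times\langle u\rangle$. By definition,
\[
E_{M,v}=\frac{1}{|v|}\sum_{\mu\in\langle v\rangle^\natural}\tilde\mu(v)^{-1}k\Mv_{\mu}
\quad\text{and}\quad
E_{L,u}=\frac{1}{|u|}\sum_{\lambda\in\langle u\rangle^\natural}\tilde\lambda(u)^{-1}k\Lu_{\lambda},
\]
where characters of $\langle v\rangle$ (resp.\ $\langle u\rangle$) are identified with characters of $\Mv$ (resp.\ $\Lu$) by inflation (Remark~\ref{rem chi}(b)). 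The bimodule $kG_\lambda$ of the notation twists the right action by $\lambda^{-1}$. For the left factor I will use the convention of Lemma~\ref{lem twistbycharacters}, so the left twist is by $\mu$; this sign convention is what makes the final sign come out correctly.

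Lemma~\ref{lem twistbycharactersofinduced} then gives
\[
E_{M,v}\,\Ind_H^G(k_\theta)\,E_{L,u}=\frac{1}{|v||u|}\sum_{\mu,\lambda}\tilde\mu(v)^{-1}\tilde\lambda(u)^{-1}\,\Ind_H^G\bigl(k_{\theta(\mu\times\lambda)|_H}\bigr).
\]
The pairs $\mu\times\lambda$ run exactly over $G^\natural$, which has order $|v||u|$, and $(\mu\times\lambda)$ evaluated at $(v,u)$ equals $\mu(v)\lambda(u)$; keep in mind that $\lambda$ enters as $\lambda^{-1}$ on the right factor. Writing $\rho:=\mu\times\lambda$ and using linearity of induction, the expression becomes
\[
\Ind_H^G\Bigl(k_\theta\cdot\Res^G_H\Bigl(\frac{1}{|G^\natural|}\sum_{\rho\in G^\natural}\tilde\rho\bigl((v,u)^{-1}\bigr)k_\rho\Bigr)\Bigr)
=\Ind_H^G\bigl(k_\theta\cdot\Res^G_H(\chi^G_{(v,u)})\bigr).
\]
This step uses that restriction is multiplicative and commutes with inflation of one-dimensional modules.

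Now apply Remark~\ref{rem chi}(c). If $(v,u)\notin PH=(M\times L)H$, then $\Res^G_H(\chi^G_{(v,u)})=0$ and the product vanishes. Otherwise choose $h\in H$ with $hP=(v,u)P$; then $\Res^G_H(\chi^G_{(v,u)})=\chi^H_h$. Remark~\ref{rem chi}(d), applied in $H$, which also has a normal Sylow $p$-subgroup $H\cap P$ with abelian quotient, gives $k_\theta\cdot\chi^H_h=\tilde\theta(h)\chi^H_h$. Inducing yields the claimed $\tilde\theta(h)\,\Ind_H^G(\chi_h)$.

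I expect the main obstacle to be bookkeeping rather than ideas. The twist conventions for the left and right factors (the inverse on $\lambda$) must be matched exactly with the twist in Lemma~\ref{lem twistbycharactersofinduced}, so that the coefficient collapses to $\tilde\rho((v,u))^{-1}$ and not to a mixed sign. If a mismatch appears, I would recheck it by replacing $\lambda$ with $\lambda^{-1}$ in the sum, which is a bijection of $\langle u\rangle^\natural$.
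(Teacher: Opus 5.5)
Your proposal is correct and follows the paper's proof: expand both idempotents, apply Lemma~\ref{lem twistbycharactersofinduced}, recognize the sum as $\Ind_H^G\bigl(k_\theta\cdot\Res^G_H(\chi^G_{(v,u)})\bigr)$, and conclude with Remark~\ref{rem chi}(c),(d); the paper writes this middle step as $\chi^G_{(v,u)}\cdot\Ind_H^G(k_\theta)$ followed by Frobenius reciprocity, which is the same identity. The sign issue you flag is not actually a choice of convention: the bimodule whose right action is twisted by $\mu^{-1}$ is isomorphic to the one whose left action is twisted by $\mu$ via $g\mapsto\mu(g)g$, so the coefficient is $\tilde\rho\bigl((v,u)\bigr)^{-1}$ with no adjustment needed.
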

\begin{proof}
By Lemma~\ref{lem twistbycharactersofinduced}, we have
\begin{align*}
	&E_{M,v}\left(\Ind^{\Mv\times\Lu}_{H}(k_\theta)\right)E_{L,u} \\
	= & \frac{1}{|v||u|} \sum_{\substack{\mu\in \Mv^\natural\\ \lambda\in\Mv^\natural}} \tilde{\mu}(v)^{-1}\tilde{\lambda}(u)^{-1}
	\Ind^{\Mv\times\Lu}_H(k_{\theta(\mu\times\lambda)|_U} )\\
	= & \frac{1}{|v||u|} \sum_{\substack{\mu\in \Mv^\natural\\ \lambda\in\Mv^\natural}} k_{\mu\times\lambda} \cdot
	\Ind^{\Mv\times\Lu}_H(k_{\theta} )\\
	= & \chi_{(v,u)}^{\Mv\times\Lu}\cdot \Ind^{\Mv\times\Lu}_H(k_{\theta} )\\
	= & \Ind^{\Mv\times\Lu}_H\bigl(\Res^{\Mv\times\Lu}_H(\chi_{(v,u)}^{\Mv\times\Lu})\cdot k_\theta\bigr)\,.
\end{align*}
By Remark~\ref{rem chi}(c),  $\Res^{\Mv\times\Lu}_H(\chi_{(v,u)}^{\Mv\times\Lu})=0$ if $(v,u)\notin (M\times L)H$ and equal to $\chi^H_w$ for any $w\in H$ with $w\in (M\times L)H$. Further, by Remark~\ref{rem chi}(d), we have $\chi_w^H\cdot k_\theta = \tilde{\theta}(w)\cdot \chi^H_w$. This completes the proof.
\end{proof}

\begin{theorem}\label{thm basis}
The elements 
$$\Psi^{M,v}_{L,u}(Q)\in E_{M,v}R T^\Delta(\Mv,\Lu)E_{L,u}\,,$$ 
where the subgroup $Q=\Delta(Y,\alpha,X)$ runs through a set of representatives of the $M^v\langle v\rangle\times L^u\langle u\rangle$-conjugacy classes of $(v,u)$-invariant diagonal subgroups of $\Mv\times\Lu$, form an $R$-basis for $E_{M,v} RT^\Delta(\Mv,\Lu)E_{L,u}$.
\end{theorem}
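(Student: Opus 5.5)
We follow the standard route: start from the known $R$-basis of $RT^\Delta(\Mv,\Lu)$, multiply it by the idempotents using Lemma~\ref{lem theproductwithidempotents}, and then identify which products survive and which coincide. Write $G:=\Mv\times\Lu$ and $P:=M\times L$, its normal Sylow $p$-subgroup. The group $RT^\Delta(\Mv,\Lu)$ is spanned by the elements $\Ind_H^G(k_\theta)$, where $H\le G$ has a twisted diagonal Sylow $p$-subgroup $H\cap P=Q$ and $\theta\in H^\natural$. Such modules are indecomposable $p$-permutation modules with vertex $Q$. Alternatively, via Remark~\ref{rem chi}(a), $RT^\Delta(\Mv,\Lu)$ has the $R$-basis $\Ind_{N_G(Q)}^G$ of the elements $\chi$ of $RN_G(Q)^\natural$, pulled back through the Brauer construction. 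This gives a basis of the form $\Ind_{H}^G(\chi^H_h)$, with $(Q,hQ)$ running over $G$-conjugacy classes of pairs with $Q$ diagonal and $h\in N_G(Q)$ a $p'$-element modulo $Q$.

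\textbf{Step 1 (spanning).} By Lemma~\ref{lem theproductwithidempotents}, the product $E_{M,v}\Ind_H^G(k_\theta)E_{L,u}$ is zero unless $(v,u)\in PH$. In that case it equals a scalar times $\Ind_H^G(\chi^H_h)$ with $h\in H\cap P(v,u)$. Choose a $p'$-element $h$ in that coset. Then $h=(v,u)$ up to $P$-conjugacy, after replacing $h$ by a $P$-conjugate complement using Schur–Zassenhaus/Glauberman. So we may assume $(v,u)\in H$. Put $Q=H\cap P$, which is then $(v,u)$-invariant, and set $K:=Q\langle(v,u)\rangle\le H$. By Remark~\ref{rem chi}(c),
\[
\Ind_H^G(\chi^H_{(v,u)})=\Ind_K^G\bigl(\chi^K_{(v,u)}\cdot\Ind_K^H(k)\bigr)/\text{(index)}.
\]
More precisely, $\Ind_K^H\Res^H_K\chi^H=\chi^H\cdot\Ind_K^H k$, and $\Ind_K^H k$ is a sum of $k_\mu$ over $\mu$ trivial on $K$, with $\chi^H_{(v,u)}k_\mu=\chi^H_{(v,u)}$. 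Together these show that $\Ind_H^G(\chi^H_{(v,u)})$ is a nonzero scalar multiple ($p'$, hence invertible) of $\Ind_K^G(\chi^K_{(v,u)})$. Hence the elements $\Psi(Q)$ span. The normalizing factor $|v|/\gcd(|v|,|u|)$ needs no analysis, since it is a unit in $R$.

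\textbf{Step 2 (conjugacy).} If $g\in G$ conjugates $Q\langle(v,u)\rangle$ to $Q'\langle(v,u)\rangle$ with both $Q,Q'$ $(v,u)$-invariant, then $\Psi(Q)=\Psi(Q')$. Conversely, we want the parametrization by $M^v\langle v\rangle\times L^u\langle u\rangle$-classes. Suppose ${}^gQ=Q'$ and ${}^g(v,u)\in Q'(v,u)$. Then $(v,u)$ and ${}^g(v,u)$ are two complements in $Q'\langle(v,u)\rangle$, so they are $Q'$-conjugate. Modifying $g$, we get $g\in C_G((v,u))\cdot$(adjustments). Since $C_G((v,u))=(M^v\times L^u)\cdot C_{\langle v\rangle\times\langle u\rangle}$, this is contained in $M^v\langle v\rangle\times L^u\langle u\rangle$.

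\textbf{Step 3 (independence), the main obstacle.} I would use the Brauer quotient/character map: the $\Psi(Q)$ for non-conjugate $Q$ have distinct vertices or distinct $p'$-parts. Concretely, take the $R$-linear isomorphism of $RT(G)$ with the ring of functions on $G$-classes of pairs $(Q,s)$, $s\in N_G(Q)$ a $p'$-element, given by species $\tau_{Q,s}$. Evaluated at $(Q,(v,u))$, the element $\Psi(Q')$ is nonzero only when $Q\le_G Q'$, which gives a unitriangular argument. Then $\tau_{Q,(v,u)}(\Psi(Q))$ is a nonzero multiple of $|N(Q,(v,u))\colon\ldots|$, which may involve $p$-powers, so I would instead argue via the explicit basis of $RT^\Delta$ by indecomposables. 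The element $\Psi(Q)$ has leading term a projective-relative-to-$Q$ indecomposable summand $\Ind_{N}^G(\text{proj})$ corresponding to the pair $(Q,\chi_{(v,u)})$. Distinct classes give distinct leading summands, and the triangular form over the vertex order yields independence. Checking that the leading coefficient is a unit in $R$ is where I expect the difficulty.
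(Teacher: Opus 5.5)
Your Steps~1 and~2 establish spanning essentially as the paper does. You reduce via Lemma~\ref{lem theproductwithidempotents} to the case $(v,u)\in H$, pass from $H$ to $Q\langle(v,u)\rangle$ at the cost of a $p'$-scalar, and observe that subgroups $Q$ conjugate under $C_G((v,u))=M^v\langle v\rangle\times L^u\langle u\rangle$ give the same $\Psi^{M,v}_{L,u}(Q)$.

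The genuine gap is Step~3. Linear independence, which is half of the theorem, is not proved, and your triangularity sketch has three concrete defects.

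First, it is false that $\Ind_H^G(k_\theta)$ is indecomposable for every $H$ with $H\cap P=Q$ (take $H=Q$). The indecomposables with vertex $Q$ are the $\Ind_H^G(k_\theta)$, $\theta\in H^\natural$, only for $H=QU$ with $U$ a Hall $p'$-subgroup of $N_G(Q)$ \cite[Theorem~4.1]{BoMo}, and these are pairwise non-isomorphic.

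Second, ``distinct classes give distinct leading summands'' requires that $(v,u)$-invariant diagonal subgroups $Q,Q'$ which are merely $G$-conjugate are already $M^v\langle v\rangle\times L^u\langle u\rangle$-conjugate. Your Step~2 only handles conjugating elements $g$ with ${}^g(v,u)\in Q'(v,u)$, and an arbitrary $g$ with ${}^gQ=Q'$ need not satisfy this.

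Third, the non-vanishing of the leading coefficient is left open. As you suspect, species cannot decide it, because $R$ may have characteristic $p$.

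All three points are settled by a direct computation that needs no triangularity. Since $(v,u)$ normalizes $Q$, Schur--Zassenhaus in $N_G(Q)$ lets you choose $U\ni(v,u)$. With $H=QU$ and $K=Q\langle(v,u)\rangle$ one has $\Ind_K^H(\chi^K_{(v,u)})=[H:K]\,\chi^H_{(v,u)}$, hence
\[
\Psi^{M,v}_{L,u}(Q)=\frac{|v|}{\gcd(|v|,|u|)}\cdot\frac{[H:K]}{|H^\natural|}\sum_{\theta\in H^\natural}\tilde{\theta}((v,u))^{-1}\,[\Ind_H^G(k_\theta)]=\frac{1}{|u|}\sum_{\theta\in H^\natural}\tilde{\theta}((v,u))^{-1}\,[\Ind_H^G(k_\theta)]\,.
\]
This is a combination of pairwise distinct indecomposables with vertex $Q$, all with unit coefficients, so in particular it is nonzero.

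For the vertex separation, suppose ${}^gQ=Q'$ with $Q,Q'$ both $(v,u)$-invariant. Then ${}^g(v,u)=x(v,u)$ with $x\in N_P(Q')$, because $G/P$ is abelian and both ${}^g(v,u)$ and $(v,u)$ normalize $Q'$. So $\langle {}^g(v,u)\rangle$ and $\langle (v,u)\rangle$ are complements of $N_P(Q')$ in $N_P(Q')\langle(v,u)\rangle$. Schur--Zassenhaus gives $y\in N_P(Q')$ with ${}^g(v,u)={}^y(v,u)$, and then $y^{-1}g\in C_G((v,u))=M^v\langle v\rangle\times L^u\langle u\rangle$ conjugates $Q$ to $Q'$.

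Hence the $\Psi^{M,v}_{L,u}(Q)$ for your representatives are nonzero and have pairwise disjoint supports in the basis of indecomposables, so they are independent. The paper argues differently: it applies the Brauer quotient $\Br_{Q_0}$ at a $Q_0$ of maximal order among those with nonzero coefficient. This kills all other terms by relative projectivity and maximality, and leaves a nonzero multiple of $\Ind^{N_G(Q_0)}_{Q_0\langle(v,u)\rangle}(\chi_{(v,u)})$.
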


\begin{proof}
Set $G:=\Mv\times \Lu$.
Let $Q\le M\times L$ be a diagonal subgroup, let $U$ be a Hall $p'$-subgroup of $N_G(Q)$, and set $H:=QU$. By \cite[Theorem~4.1]{BoMo}, the isomorphism classes of indecomposable $p$-permutation $kG$-modules with vertex $Q$ are represented by the modules 
\begin{equation*}
   \Ind_H^G(k_\theta)\,,
\end{equation*} 
where $\theta$ runs through $H^\natural$. By Lemma~\ref{lem theproductwithidempotents}, the product
\begin{equation*}
   E_{M,v}\bigl(\Ind_H^G(k_\theta)\bigr) E_{L,u}
\end{equation*}
equals $0$ unless $(v,u)\in (M\times L)H=(M\times L)U$. After conjugating $Q$ with an element in $(M\times L)U$ if necessary, we may assume that $(v,u)\in U$. Thus, if none of the $G$-conjugates of $Q$ is $(v,u)$-invariant, then $E_{M,v} W E_{L,u}=0$ for all indecomposable $p$-permutation $kG$-modules $W$ with vertex $Q$.

\smallskip
Assume now that $Q$ is $(v,u)$-invariant and $H$ is chosen such that $(v,u)\in H$. Then, by Lemma~\ref{lem theproductwithidempotents} and Remark~\ref{rem chi}(c), we have
\begin{align*}
   E_{M,v}\bigl( \Ind_H^G (k_\theta) \bigr) E_{L,u} & = \tilde{\theta}(v,u)\cdot \Ind_H^G(\chi^H_{v,u})\\
   & = \tilde{\theta}(v,u)\cdot \Ind_{Q\langle(v,u)\rangle}^G(\chi^{Q\langle (v,u)\rangle}_{v,u})\,.
\end{align*}
Note that this element depends up to the scalar $\tilde{\theta}(v,u)$ only on $Q$ and not on $\theta$.

Let $\calS$ be the set of diagonal subgroups of $M\times L$ and let $\calS'$ denote the set of diagonal subgroups of $M\times L$ which have a $G$-conjugate that is $(v,u)$-invariant. So far, we have proved that the elements
\begin{equation*}
   \Ind_{Q\langle (v,u)\rangle}^G(\chi^{Q\langle (v,u)\rangle}_{(v,u)})\,,
\end{equation*}
where $Q$ runs through a set of representatives of the $G$-conjugacy classes of $\calS'$, generate the $R$-module $E_{M,v} RT^\Delta(\Mv,\Lu) E_{L,u}$. Let $\calS^{(v,u)}$ denote the subset of $\calS$ consisting of $(v,u)$-invariant subgroups in $\calS$ and let $[\calS^{(v,u)}]$ be a set of representative of the $M^v\times L^u$-conjugacy classes of $\calS^{(v,u)}$. Then, by Glauberman's Lemma, $[\calS^{(v,u)}]$ is also a set of representatives of the $G$-conjugacy classes of $\calS'$. Thus, the elements $\Psi_{(L,u)}^{(M,v)}(Q)$, with $Q\in[\calS^{(v,u)}]$ generate the $R$-module $E_{M,v} RT^\Delta(\Mv,\Lu) E_{L,u}$.

\smallskip
Now, suppose that
\[
\sum_{Q\in[\calS^{(v,u)}]} a_Q\Psi^{M,v}_{L,u}(Q)=0
\]
with $a_Q\in R$,
 and suppose that not all the coefficients $a_Q$ are zero.
 Choose $Q_0\in[\calS^{(v,u)}]$ such that $a_{Q_0}\neq 0$ and
$|Q_0|$ is maximal among all $Q\in[\calS^{(v,u)}]$ with $a_Q\neq 0$.

We apply the 
$R$-linear extension 
of the Brauer quotient at $Q_0$ to the above equation. For $Q\in[\calS^{(v,u)}]$, the element
$\Psi^{M,v}_{L,u}(Q)$ is a non-zero scalar multiple of
\[
\Ind^G_{Q\langle(v,u)\rangle}(\chi_{v,u})\,,
\]
which is relatively $Q$-projective.
Hence its Brauer quotient at $Q_0$ can be non-zero only if $Q_0$ is
$G$-conjugate to a subgroup of $Q$. By the maximality of $|Q_0|$, this can happen only if $Q_0$ and $Q$ are $G$-conjugate. Since $[\calS^{v,u}]$ has been chosen as a set of representatives of the $G$-conjugacy classes of $\calS'$, this forces $Q=Q_0$. Thus
\[
\Br_{Q_0}\bigl(\Psi^{M,v}_{L,u}(Q)\bigr)=0
\qquad\text{for all }Q\in[\calS^{v,u}],\; Q\neq Q_0.
\]
On the other hand, again by \cite[Theorem~4.1]{BoMo}, one has
\[
\Br_{Q_0}\bigl(\Psi^{M,v}_{L,u}(Q_0)\bigr)
= \kappa \cdot \Ind^{N_{G}(Q_0)}_{Q_0\langle(v,u)\rangle} (\chi_{v,u}) \neq 0\,,
\]
with a non-zero constant $\kappa\in R$.
This implies that $a_{Q_0}=0$, a contradiction. Therefore all coefficients $a_Q$ are zero, and the elements
\[
\Psi^{M,v}_{L,u}(Q),\qquad Q\in[\calS^{v,u}],
\]
are $R$-linearly independent.
\end{proof}

%%%%%%%%%%%%%%%%% SECTION 10 %%%%%%%%%%%%%%%%%%%%%%%%%%%%%%%%%%%%%%%%%%

\section{Composition formula}

The main goal of this section is a formula (see Theorem~\ref{thm  compositionformula}) that expresses the composition of two basis elements from the last section again in terms of this basis.

\begin{lemma}\label{lem gcdformula}
Let $a,b,c\in \NN$. One has
\begin{align*}
\gcd\left(\frac{b}{\gcd(a,b)},\frac{b}{\gcd(c,b)}\right)=\frac{b}{\lcm\left(\gcd(a,b),\gcd(c,b)\right)}\,.
\end{align*}
\end{lemma}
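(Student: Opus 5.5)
The natural approach is to compare $p$-adic valuations for every prime $p$, since both sides are positive integers determined by their prime factorizations. I will assume $a,b,c\ge 1$, so that all gcds are nonzero; if $0\in\NN$ is allowed, the convention $\gcd(0,b)=b$ keeps everything meaningful for $b\ge1$, and the same argument applies. Fix a prime $q$ and write $\alpha=v_q(a)$, $\beta=v_q(b)$, $\gamma=v_q(c)$, where $v_q$ denotes the $q$-adic valuation.

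First step: compute the valuation of the left-hand side. Using $v_q(\gcd(x,y))=\min(v_q(x),v_q(y))$ and $v_q(x/y)=v_q(x)-v_q(y)$ when $y\mid x$, the valuation of $b/\gcd(a,b)$ is $\beta-\min(\alpha,\beta)$. Similarly, the valuation of $b/\gcd(c,b)$ is $\beta-\min(\gamma,\beta)$. Hence the left-hand side has valuation
\[
\min\bigl(\beta-\min(\alpha,\beta),\,\beta-\min(\gamma,\beta)\bigr)=\beta-\max\bigl(\min(\alpha,\beta),\min(\gamma,\beta)\bigr).
\]

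Second step: compute the valuation of the right-hand side. Since $\lcm$ corresponds to $\max$ on valuations, $\lcm(\gcd(a,b),\gcd(c,b))$ has valuation $\max(\min(\alpha,\beta),\min(\gamma,\beta))$. This lcm divides $b$, because both gcds divide $b$, so the quotient is an integer. Its valuation is $\beta-\max(\min(\alpha,\beta),\min(\gamma,\beta))$, which matches the left-hand side.

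Since the valuations agree at every prime and both sides are positive integers, they are equal. The only point needing care is the elementary identity $\min(\beta-x,\beta-y)=\beta-\max(x,y)$, which is immediate, so I expect no real obstacle in this argument.
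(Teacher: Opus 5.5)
Your proof is correct and is essentially the paper's argument, written prime by prime. The paper observes that $i\mapsto b/i$ is an order-reversing bijection of the lattice of divisors of $b$, so it exchanges $\gcd$ and $\lcm$; your exponent map $x\mapsto \beta-x$, which turns $\max$ into $\min$, is that same anti-isomorphism in each $q$-adic coordinate.
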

\begin{proof}
This follows from the fact that the map $i\mapsto b/i$ is an isomorphism from the lattice of divisors of $b$ (ordered by divisibility) to its opposite lattice.
\end{proof}

For the definition of the notation used in the following Lemma we refer the reader to \cite[Notations~2.3.19, 2.3.21]{Bouc2010a}

\begin{lemma}\label{lem starproduct}
Let $(N,w), (M,v)$ and $(L,u)$ be $\DD$-pairs, let $\Delta(T,\beta,Z)\le N\times M$ be a $(w,v)$-invariant subgroup and $\Delta(Y,\alpha,X)\le M\times L$ be a $(v,u)$-invariant subgroup. For any $m\in M$, set
\begin{align*}
K_m:=k_2\bigl(\Delta(T,\beta,Z)\cdot \langle (w,v)\rangle\bigr)\cap \lexp{m}{k_1\bigl(\Delta(Y,\alpha,X)\cdot \langle (v,u)\rangle\bigr)}\le \Mv\,
\end{align*}
and
\begin{align*}
\Delta(T,*,X)_m:=\left(\Delta(T,\beta,Z)\cdot \langle (w,v)\rangle\right)*\lexp{(m,1)}{\left(\Delta(Y,\alpha,X)\cdot \langle (v,u)\rangle\right)}\le \Nw\times \Lu\,.
\end{align*}

\smallskip
{\rm (a)} One has
$$ K_m=\{v^j\in C_{\langle v\rangle}(m) \mid \exists i\in\ZZ \colon v^i=v^j, w^j=1, u^i=1\}\,,$$
and if $m\in M^v$, then
\begin{align*}
|K_m|=\frac{|v|\cdot \gcd(|w|,|v|,|u|)}{\gcd(|w|,|v|)\cdot \gcd(|v|,|u|)}\,.
\end{align*}

\smallskip
{\rm (b)} The group $\Delta_m:=\Delta(\beta(Z\cap \lexp{m}Y),\beta i_m\alpha,\alpha^{-1}(Y\cap Z^m))$ is a Sylow $p$-subgroup of $\Delta(T,*,X)_m$.

\smallskip
{\rm (c)} If $m\in M^v$, then
\begin{align*}
\Delta(T,*,X)_m=\Delta_m\cdot \left(\langle (w,v)\rangle *\langle (v,u)\rangle\right)\,.
\end{align*}

\smallskip
{\rm (d)} One has
\begin{align*}
|\langle (w,v)\rangle *\langle (v,u)\rangle|=\frac{|w|\cdot |u|}{\gcd(|w|,|v|,|u|)}\,.
\end{align*}
\end{lemma}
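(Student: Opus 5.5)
The plan is to work throughout with the explicit form of the elements of the two groups involved. Every element of $\Delta(T,\beta,Z)\cdot\langle (w,v)\rangle$ has the form $(\beta(z)w^j, zv^j)$ with $z\in Z$ and $j\in\ZZ$. Likewise every element of $\Delta(Y,\alpha,X)\cdot\langle(v,u)\rangle$ has the form $(\alpha(x)v^i,xu^i)$ with $x\in X$ and $i\in\ZZ$. The basic tool is uniqueness of normal forms in $\Nw$, $\Mv$ and $\Lu$: for instance $\beta(z)w^j=1$ forces $z=1$ and $w^j=1$. Equally useful is passing modulo the normal Sylow subgroups, i.e.\ using $\Mv/M\cong\langle v\rangle$.

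For (a), I first compute the two kernels, using Bouc's notation for $k_1$ and $k_2$. We have $k_2(\Delta(T,\beta,Z)\langle(w,v)\rangle)=\{v^j\mid w^j=1\}$ and $k_1(\Delta(Y,\alpha,X)\langle(v,u)\rangle)=\{v^i\mid u^i=1\}$, both contained in $\langle v\rangle$. Suppose $v^j={}^m(v^i)$. Reducing modulo $M$ gives $v^j=v^i$, so $m$ centralizes $v^j$; this is the stated description of $K_m$. If $m\in M^v$, the centralizer condition is vacuous. In that case $K_m=\langle v^{|w|}\rangle\cap\langle v^{|u|}\rangle$ inside the cyclic group $\langle v\rangle$ of order $b=|v|$. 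These two subgroups have orders $b/\gcd(|w|,b)$ and $b/\gcd(|u|,b)$. The order of their intersection is the gcd of these two orders, which Lemma~\ref{lem gcdformula} evaluates as $b/\lcm(\gcd(|w|,b),\gcd(|u|,b))$. Using $\lcm(\gcd(a,b),\gcd(c,b))=\gcd(a,b)\gcd(c,b)/\gcd(a,b,c)$ then gives the stated formula.

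For (b), I would use the fibre product description of the star product. Let $F=\{(x,y)\in A\times B\mid p_2(x)=p_1(y)\}$, where $A=\Delta(T,\beta,Z)\langle(w,v)\rangle$ and $B={}^{(m,1)}(\Delta(Y,\alpha,X)\langle(v,u)\rangle)$. Then $F$ surjects onto $\Delta(T,*,X)_m$, and the kernel is isomorphic to $K_m$, which is a $p'$-group. Hence a Sylow $p$-subgroup of $F$ maps onto a Sylow $p$-subgroup of the star product. The $p$-elements of $A$ and $B$ lie in their normal Sylow subgroups $\Delta(T,\beta,Z)$ and ${}^{(m,1)}\Delta(Y,\alpha,X)$. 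Therefore the Sylow subgroup is $\Delta(T,\beta,Z)*{}^{(m,1)}\Delta(Y,\alpha,X)$, and by the usual diagonal computation \cite[Lemma~2.3.22]{Bouc2010a} this equals $\Delta_m$.

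For (c), take $m\in M^v$. Since $(m,1)$ commutes with $(v,u)$, we get $B={}^{(m,1)}\Delta(Y,\alpha,X)\cdot\langle(v,u)\rangle$. It follows that $\langle(w,v)\rangle*\langle(v,u)\rangle$ is a $p'$-subgroup of $\Delta(T,*,X)_m$, and so is $\Delta_m$. Now I project to $(\Nw\times\Lu)/(N\times L)\cong\langle w\rangle\times\langle u\rangle$. Take a general element $(\beta(z)w^j,xu^i)$ of the star product, with $zv^j={}^m(\alpha(x)v^i)$. Reducing modulo $M$ gives $v^j=v^i$, so its image $(w^j,u^i)$ already lies in $\langle(w,v)\rangle*\langle(v,u)\rangle$. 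Since $\Delta_m$ is the intersection with $N\times L$ by (b), we obtain $\Delta(T,*,X)_m=\Delta_m\cdot(\langle(w,v)\rangle*\langle(v,u)\rangle)$.

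For (d), the group $\langle(w,v)\rangle*\langle(v,u)\rangle$ is the set $\{(w^j,u^i)\mid v^j=v^i\}$. The condition is that there exist integers $j'\equiv j \bmod |w|$ and $i'\equiv i\bmod |u|$ with $j'\equiv i'\bmod|v|$. Equivalently, $j-i\in |w|\ZZ+|u|\ZZ+|v|\ZZ=g\ZZ$ with $g=\gcd(|w|,|v|,|u|)$. Counting residues, for each $j$ modulo $|w|$ there are $|u|/g$ admissible $i$ modulo $|u|$, which gives $|w||u|/g$.

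I expect the main obstacle to be (b). Checking directly that $\Delta(T,*,X)_m\cap(N\times L)=\Delta_m$ is awkward when $m\notin M^v$, and the fibre-product and Sylow-lifting argument is how I would avoid it.
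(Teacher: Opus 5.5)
Your proof is correct. Parts (a) and (d) are essentially the paper's argument. In (d) the paper computes $|k_1(D)|\cdot|p_2(D)|$ with $k_1(D)=\langle w^{\gcd(|v|,|u|)}\rangle$ and $p_2(D)=\langle u\rangle$, whereas you count residue pairs $(j,i)$ with $j\equiv i \bmod \gcd(|w|,|v|,|u|)$; the two counts are equivalent.

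The genuine difference is in (b).
\begin{itemize}
\item The paper takes $(c,a)\in\Delta(T,*,X)_m\cap(N\times L)$ with a witness $b=zv^j={}^m(\alpha(x)v^i)$. It notes that $w^j=1$ and $u^i=1$ force $v^j$ to centralize $Z$ and $v^i$ to centralize $Y$, using the invariance hypotheses and injectivity of $\beta$, $\alpha$. Comparing $p$-parts and $p'$-parts of $b$ then gives $z={}^m\alpha(x)$. This works verbatim for arbitrary $m\in M$, so the case $m\notin M^v$ is not actually awkward there.
\item Your fibre-product argument is the structural form of the same idea. Passing to the Sylow subgroup $F\cap(P_A\times P_B)$ of $F$ amounts to replacing the witness $b$ by its $p$-part. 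What it buys is a general statement: if $A$ and $B$ have normal Sylow $p$-subgroups $P_A$ and $P_B$, then $P_A*P_B$ is the Sylow $p$-subgroup of $A*B$. It also needs no centralizer computation.
\item Your observation that the kernel is the $p'$-group $K_m$ is superfluous. Any surjective homomorphism maps a Sylow $p$-subgroup onto a Sylow $p$-subgroup.
\end{itemize}

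In (c) the paper again computes directly, using ${}^mv^i=v^i$ and uniqueness of normal forms, so its (c) is independent of (b). You instead project onto $\langle w\rangle\times\langle u\rangle$ and invoke (b) to identify the kernel as $\Delta_m=\Delta(T,*,X)_m\cap(N\times L)$. This is valid, implicitly using that the Sylow $p$-subgroup is unique.

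One wording slip: in (c) you say $\langle(w,v)\rangle*\langle(v,u)\rangle$ is a $p'$-subgroup ``and so is $\Delta_m$''. In fact $\Delta_m$ is the normal Sylow $p$-subgroup of $\Delta(T,*,X)_m$, which is what your argument actually uses.
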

\begin{proof}
{\rm (a)} We have
\begin{align*}
K_m&=\{b\in\Mv\,|\, (1,b)\in\Delta(T,\beta,Z)\cdot \langle (w,v)\rangle,\, (b^m,1)\in \Delta(Y,\alpha,X)\cdot \langle (v,u)\}\\&
=\{b\in\Mv\,|\, \exists z\in Z, x\in X, i,j\in \ZZ, (1,b)\in (\beta(z),z)(w^j,v^j),(b^m,1)=(\alpha(x),x)(v^i,u^i)\}\\&
=\{v^j\,|\, w^j=1,\, (v^j)^m=v^i, \, u^i=1\}\\&
=\{v^j\,|\, w^j=1,\, [m,v^j]v^j=v^i, \, u^i=1\}\\&
= \{v^j\in C_{\langle v\rangle}(m) \mid \exists i\in\ZZ \colon v^i=v^j, w^j=1, u^i=1\}\,,
%=\{v^j=v^i\,|\, w^j=1=u^i, \, v^i\in C_{\langle v\rangle}(m)\}\,,
\end{align*}
which proves the first part. Now assume that $m\in M^v$. Then
\begin{align*}
K_m &= \{v^j\mid \exists i\in\ZZ: v^i=v^j, u^i=1, w^i=1\}  \\ 
%K_m&=\{v^j=v^i\,|\, w^j=1=u^i\}\\
&=\{v^j\,|\, w^j=1\}\cap \{v^i\,|\, u^i=1\}\\
&=\langle v^{|w|}\rangle \cap \langle v^{|u|}\rangle
\end{align*}
which together with Lemma~\ref{lem gcdformula} implies that
\begin{align*}
|K_m|&=\gcd\left(\frac{|v|}{\gcd(|w|,|v|)},\frac{|v|}{\gcd(|v|,|u|)}\right)=\frac{|v|}{\lcm\left(\gcd(|w|,|v|), \gcd(|v|,|u|)\right)}\\&
=\frac{|v|\cdot \gcd(|w|,|v|,|u|)}{\gcd(|w|,|v|)\cdot \gcd(|v|,|u|)}.
\end{align*}

\smallskip
{\rm (b)} Let $(c,a)\in (N\times L)\cap \Delta(T,*,X)_m$, 
the Sylow $p$-subgroup of $\Delta(T,*,X)_m$. 
Then there exists $b\in\Mv$ such that $(c,b)\in \left(\Delta(T,\beta,Z)\cdot \langle (w,v)\rangle\right)$ and $(b^m,a)\in \left(\Delta(Y,\alpha,X)\cdot \langle (v,u)\rangle\right)$. Thus,
\begin{align*}
(c,b)=(\beta(z),z)(w^j,v^j) \quad \hbox{and}\quad (b^m,a)=(\alpha(x),x)(v^i,u^i)
\end{align*}
for some $z\in Z$, $x\in X$ and integers $i$ and $j$. Since $c\in N$, the equation $c=\beta(z)w^j$ implies that $w^j=1$ and $c=\beta(z)$. Similarly, one has $a=x$ and $u^i=1$. Moreover,
\begin{align*}
zv^j= b = \lexp{m}{\left(\alpha(x)v^i\right)}=\lexp{m}\alpha(x)[m,v^i]v^i
\end{align*}
which implies that $v^j=v^i$. Furthermore, $w^j=1$ implies that $v^j\in k_1(\langle (w,v)\rangle)$ and in particular, $v^j$ centralizes $Z$. So, $z$ is the $p$-part of $b$ and $v^j$ is the $p'$-part of $b$. Similarly, $u^i=1$ implies that $v^i$ centralizes $Y$ and so $\lexp{m}\alpha(x)$ is the $p$-part of $b$ and $\lexp{m}v^i$ is the $p'$-part of $b$. These imply that $z=\lexp{m}\alpha(x)$ and $v^j=\lexp{m}{v^i}=v^i$. 
This shows that $(c,a)\in\Delta_m$. The converse inclusion holds trivially.

\smallskip
{\rm (c)} Let $(c,a)\in \Delta(T,*,X)_m$. Then there exists $b\in\Mv$ such that $(c,b)\in\Delta(T,\beta,Z)\langle(w,v)\rangle$ and $(b^m,a)\in \Delta(Y,\alpha,X)\langle(v,u)\rangle$. Thus, we have
\begin{align*}
(c,b)=(\beta(z)w^j,zv^j)\quad \text{and}\quad (b^m,a)=(\alpha(x)v^i,xu^i)
\end{align*}
for some $z\in Z$, $x\in X$, $i,j\in \ZZ$. Since $m$ is $\langle v\rangle$-fixed, we obtain that
\begin{align*}
b=zv^j=\lexp{m}\alpha(x)\lexp{m}v^i=\lexp{m}\alpha(x)v^i
\end{align*} 
which shows that $v^j=v^i$ and $z=\lexp{m}\alpha(x)$. This proves the claim.

\smallskip
{\rm (d)} Write $D=\langle (w,v)\rangle *\langle (v,u)\rangle$. Note that
\begin{align*}
k_1(D)&=\{w^a\in\langle w\rangle\,|\, \exists b\in\ZZ, \,v^a=v^b,\, u^b=1\}\\&
=\{w^a\in \langle w\rangle\,|\, \exists k,l\in\ZZ, \,a=k|u|+l|v|\}\\&
=\langle w^{(|v|,|u|)}\rangle\,,
\end{align*}
and hence 
\begin{align*}
|k_1(D)|=\frac{|w|}{(|w|,|v|,|u|)}\,.
\end{align*}
We also have $p_2(D)=\langle u\rangle$. Therefore,
\begin{align*}
|D|=|k_1(D)|\cdot |p_2(D)|=\frac{|w|\cdot |u|}{(|w|,|v|,|u|)}\,,
\end{align*}
as desired.
\end{proof}

\begin{theorem}\label{thm compositionformula}(Composition Formula)
Let $(N,w), (M,v)$ and $(L,u)$ be $\DD$-pairs, let $\Delta(T,\beta,Z)\le N\times M$ be a $(w,v)$-invariant subgroup and $\Delta(Y,\alpha,X)\le M\times L$ be a $(v,u)$-invariant subgroup. Then
\begin{align*}
\Psi^{N,w}_{M,v}(\Delta(T,\beta,Z))\circ\Psi^{M,v}_{L,u}(\Delta(Y,\alpha,X))=\sum_{m\in Z^v\backslash M^v/Y^v} \Psi^{N,w}_{L,u}(\Delta_m)
\end{align*}
where $\Delta_m=\Delta(\beta(Z\cap \lexp{m}Y),\beta i_m\alpha,\alpha^{-1}(Y\cap Z^m))$.
\end{theorem}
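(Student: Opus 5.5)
We plan to compute the composition at the level of induced bimodules, using the Mackey-type formula for tensor products of induced bimodules (Bouc's formula for $\Ind_A^{G\times H}(V)\otimes_{kH}\Ind_B^{H\times K}(W)$). Set $G_1:=\Nw\times\Mv$ and $G_2:=\Mv\times\Lu$, and write $P:=\Delta(T,\beta,Z)\langle(w,v)\rangle$ and $Q:=\Delta(Y,\alpha,X)\langle(v,u)\rangle$. By Remark~\ref{rem chi}(e), each $\Psi$ is a scalar times $\Ind(\chi_{(\cdot,\cdot)})$. We first write $\chi^P_{(w,v)}$ and $\chi^Q_{(v,u)}$ as averages of one-dimensional modules $k_\theta$, $k_\eta$. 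For each such pair we then apply the tensor product formula
\[
\Ind_P^{G_1}(k_\theta)\otimes_{k\Mv}\Ind_Q^{G_2}(k_\eta)\cong\bigoplus_{m\in p_2(P)\backslash \Mv/p_1(Q)}\Ind^{\Nw\times\Lu}_{P*\lexp{(m,1)}Q}\bigl(k_\theta\otimes_{kK_m}k_\eta^{m}\bigr),
\]
where the inner tensor product is over the group $K_m$ of Lemma~\ref{lem starproduct}. Since $K_m$ is a $p'$-group, $k_\theta\otimes_{kK_m}k_\eta$ is either $0$ or one-dimensional, depending on whether the characters agree on $K_m$.

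Next we reduce the double coset index set. Since $p_2(P)=Z\langle v\rangle$ and $p_1(Q)=Y\langle v\rangle$, and $\langle v\rangle$ lies in both, Glauberman's Lemma (the same argument as in Theorem~\ref{thm basis}) identifies $Z\langle v\rangle\backslash\Mv/Y\langle v\rangle$ with $Z^v\backslash M^v/Y^v$, with representatives $m\in M^v$. For such $m$, Lemma~\ref{lem starproduct}(c) gives $P*\lexp{(m,1)}Q=\Delta_m\cdot D$ with $D=\langle(w,v)\rangle*\langle(v,u)\rangle$. Moreover $(w,u)\in D$, so $\Delta_m$ is $(w,u)$-invariant and $\Delta_m\langle(w,u)\rangle\le\Delta_m D$.

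After summing over $\theta,\eta$ with the $\chi$-weights, we expect the character average to collapse to $c\cdot\Ind^{\Nw\times\Lu}_{\Delta_mD}(\chi^{\Delta_mD}_{(w,u)})$ for some constant $c$. Using Remark~\ref{rem chi}(c) in the form $\Ind_H^G\chi^H_h=\chi^G_h\Ind_H^G k$, this equals $c\cdot\chi_{(w,u)}\cdot\Ind_{\Delta_mD}k$, and we compare it with $\chi_{(w,u)}\cdot\Ind_{\Delta_m\langle(w,u)\rangle}k$. The ratio of the inductions is the index $[D:\langle(w,u)\rangle]$, which is a $p'$-number. The cleanest way to organise this is to check the collapse by evaluating Brauer characters, i.e.\ by viewing elements of $RG^\natural$ as functions on cosets as in Remark~\ref{rem chi}(a). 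Alternatively, one can first pass the idempotents through: the composition of $E_{M,v}$-truncated elements equals $\chi_{(w,u)}$ times the composition of the untwisted $\Ind(k)$'s, up to how the middle $E_{M,v}$ acts.

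The main obstacle is matching the scalars. The sources of scalars are the prefactors $|v|/\gcd(|v|,|u|)$ and $|w|/\gcd(|w|,|v|)$, the $1/|K_m|$ or $|K_m|$ factors arising from the $K_m$-tensor product together with the character orthogonality, and the index $[D:\langle(w,u)\rangle]$. The target prefactor is $|w|/\gcd(|w|,|u|)$. We would use $|K_m|$ from Lemma~\ref{lem starproduct}(a), $|D|$ from Lemma~\ref{lem starproduct}(d), and $|\langle(w,u)\rangle|=\lcm(|w|,|u|)$, and then verify by a gcd/lcm computation, in the style of Lemma~\ref{lem gcdformula}, that the product of these constants equals $|w|/\gcd(|w|,|u|)$. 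We expect the averaging over characters to force compatibility on $K_m$ exactly once per summand, so that each $m$ contributes precisely $\Psi^{N,w}_{L,u}(\Delta_m)$.
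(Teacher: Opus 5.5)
Your overall route is the paper's: Bouc's tensor product formula for $\Ind(\chi)\otimes_{k\Mv}\Ind(\chi)$, expansion of the $\chi$'s into one-dimensional modules, evaluation as functions on cosets of the Sylow subgroup, Lemma~\ref{lem starproduct}, and the same gcd/lcm bookkeeping (which does come out to $|w|/\gcd(|w|,|u|)$). However, your reduction of the index set is false. The double cosets $Z\langle v\rangle\backslash\Mv/Y\langle v\rangle$ are in bijection with the $\langle v\rangle$-\emph{orbits} on $Z\backslash M/Y$, not with $Z^v\backslash M^v/Y^v$. Glauberman's Lemma only identifies $Z^v\backslash M^v/Y^v$ with the set $(Z\backslash M/Y)^v$ of $v$-\emph{stable} double cosets. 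A $\langle v\rangle$-orbit of length $>1$ gives a double coset $Z\langle v\rangle mY\langle v\rangle$ with no representative in $M^v$. For example, take $p=2$, $M=C_2\times C_2$, $Z=Y=1$ and $v$ of order $3$: there are two double cosets, but $M^v=1$. For such $m$, Lemma~\ref{lem starproduct}(c) and the formula for $|K_m|$ are unavailable, and the corresponding Bouc summands really are present in the tensor product. The theorem holds only because these summands vanish after the character averaging, and your proposal never shows this.

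The missing step is the first half of Lemma~\ref{lem composition}. Take arbitrary $m\in M$ and consider the function attached to $\chi_{(w,v)}\otimes_{K_m}\lexp{(m,1)}{\chi_{(v,u)}}$.
\begin{enumerate}
\item The sums over $\nu$ and $\lambda$ are nonzero only if $c\in Nw$ and $a\in Lu$.
\item Linear characters are conjugation invariant, so $\lexp{m}{\mu}=\mu$. Hence the sum over $\mu,\mu'$ with $(\mu'\cdot\lexp{m}{\mu})|_{K_m}=1$ becomes $|v|\sum_{\eta|_{K_m}=1}\tilde\eta(v^{-1}b)$.
\item That sum is nonzero only if $v^{-1}b\in MK_m$. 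Replacing $b$ by $bs$ with $s\in K_m$, you get $b\in Mv$ with $(c,b)=(\beta(z)w,zv)$ and $(b^m,a)=(\alpha(x)v,xu)$.
\item This gives $zv=m\alpha(x)v(m^{-1})v$, hence $v(m)=z^{-1}m\alpha(x)\in ZmY$.
\end{enumerate}
So the summand is zero unless $ZmY$ is $v$-stable, and only after discarding these terms may you use Glauberman to choose representatives $m\in M^v$.

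You also still need to prove, not just expect, the collapse of the surviving terms to $\frac{1}{|K_m|}\chi_{(w,u)}$ for $m\in M^v$. This is the second half of that lemma: one shows the function is supported on $\Delta_m(w,u)$ and evaluates it at $(w,u)$ with $b=v$.
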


\begin{proof}
By definition, the composition
\begin{align*}
\Psi^{N,w}_{M,v}(\Delta(T,\beta,Z))\circ\Psi^{M,v}_{L,u}(\Delta(Y,\alpha,X))
\end{align*}
is equal to
\begin{align*}
  \lambda\cdot \Ind^{\Nw\times\Mv}_{\Delta(T,\beta,Z)\cdot \langle (w,v)\rangle} (\chi_{(w,v)}) \otimes_{k\Mv} 
  \Ind^{\Mv\times\Lu}_{\Delta(Y,\alpha,X)\cdot \langle (v,u)\rangle}(\chi_{(v,u)})\,,
\end{align*}
where $\lambda=\frac{|w||v|}{(|w|,|v|)(|v|,|u|)}$. By~Theorem 1.1 of~\cite{tensorbimodules}, % Bouc's formula
we have
\begin{align}\label{eqn tensor of chis}
   & \Ind^{\Nw\times\Mv}_{\Delta(T,\beta,Z)\cdot \langle (w,v)\rangle} (\chi_{(w,v)}) \otimes_{k\Mv} 
  \Ind^{\Mv\times\Lu}_{\Delta(Y,\alpha,X)\cdot \langle (v,u)\rangle}(\chi_{(v,u)}) \notag \\
  = & \sum_{m\in Z\langle v\rangle\backslash \Mv/ Y \langle v\rangle} 
  \Ind^{\Nw\times\Lu}_{\Delta(T,*,X)_m} \bigl(\chi_{(w,v)} \otimes_{K_m}\lexp{(m,1)}{\chi_{(v,u)}}\bigr)\,,
\end{align}
where we can choose $m\in M$
and where
\begin{align*}
\Delta(T,*,X)_m=\left(\Delta(T,\beta,Z)\cdot \langle (w,v)\rangle\right)*\lexp{(m,1)}{\left(\Delta(Y,\alpha,X)\cdot \langle (v,u)\rangle\right)}
\end{align*}
and
\begin{align*}
K_m=k_2\bigl(\Delta(T,\beta,Z)\cdot \langle (w,v)\rangle\bigr)\cap \lexp{m}{k_1\bigl(\Delta(Y,\alpha,X)\cdot \langle (v,u)\rangle\bigr)}\,.
\end{align*}

\smallskip
First note that the subgroups $Y$ and $Z$ are $v$-invariant and that $\langle v\rangle$ acts on the set $Z\backslash M/Y$ by $v\cdot (ZmY):= v(ZmY)=Zv(m)Y$, for $m\in M$. Moreover, if $Zm'Y=Zv(m)Y$ then $Z\langle v\rangle m' Y\langle v \rangle = Z\langle v\rangle m Y\langle v \rangle$ in 
$Z\langle v\rangle \backslash M\langle v\rangle / Y\langle v \rangle$. Therefore, the function
\begin{equation}\label{eqn double cosets 1}
   \langle v\rangle\big\backslash \bigl(Z\backslash M/Y)\bigr)\longrightarrow  Z\langle v\rangle \backslash M\langle v\rangle / Y\langle v \rangle\,,
\end{equation}
induced by $ZmY\mapsto Z\langle v\rangle m Y\langle v\rangle$ is well-defined. It is obviously surjective. Moreover, it is injective. In fact, suppose that $m,m'$ are in $M$ with $m'\in Z\langle v \rangle m Y\langle v \rangle$. Then $m'=zv^j my v^i = zv^j(m) v^j (y) v^{j+i}$ for some $i,j\in\ZZ$, $z\in Z$ and $y\in Y$. But this implies $v^{j+i}=1$ and $m'\in Zv^j(m)Y$.

\smallskip
We claim that, for any $m\in M$, the element
\begin{equation}\label{eqn crucial element}
 \chi_{(w,v)}^{\Delta(T,\beta,Z)\cdot \langle (w,v)\rangle} \otimes_{K_m}
   \lexp{(m,1)}{\chi_{(v,u)}^{\Delta(Y,\alpha,X)\cdot \langle (v,u)\rangle}} \in R T(\Delta(T,*,X)_m)
\end{equation}
from Equation~(\ref{eqn tensor of chis}) is equal to $0$ if $ZmY$ is not $v$-stable and equal to
\begin{equation*}
   \frac{1}{|K_m|} \chi_{(w,u)}^{\Delta(T,*,X)_m} 
\end{equation*}
if $m\in M^v$. This will be proved in the subsequent Lemma~\ref{lem composition}.

\smallskip
Thus, we may assume that $ZmY$ is $v$-stable. By Glauberman's Lemma applied to the action of $(Z\times Y)\langle v\rangle$ on $M$ given by $(z,y)v^i\cdot m:=zv^i(m)y^{-1}$, we obtain a bijection $Z^v\backslash M^v/ Y^v\to (Z\backslash M/Y)^v$ onto the fixed points of $v$ on $Z\backslash M/Y$. Combining this with the bijection (\ref{eqn double cosets 1}), we can replace the sum over $m\in Z\langle v\rangle\backslash \Mv/ Y \langle v\rangle$ in the first paragraph by a sum over $m\in Z^v\backslash M^v/Y^v$. Moreover, if $m\in M^v$, then by Lemma~\ref{lem starproduct}(c) we have
\begin{equation*}
\Delta(T,*,X)_m=\Delta_m\cdot \left(\langle (w,v)\rangle *\langle (v,u)\rangle\right)\,.
\end{equation*}

\smallskip
Altogether, we now have
\begin{align*}
   \Psi^{N,w}_{M,v}(\Delta(T,\beta,Z))\circ\Psi^{M,v}_{L,u}(\Delta(Y,\alpha,X))&=\sum_{m\in Z^v\backslash M^v/Y^v} \frac{\lambda}{|K_m|} \Ind^{\Nw\times\Lu}_{\Delta(T,*,X)_m} ( \chi_{(w,u)})
\end{align*}
which is equal to
\begin{align*}
\sum_{m\in Z^v\backslash M^v/Y^v} \frac{\lambda\cdot |\langle (w,u)\rangle|}{|K_m|\cdot |\langle(w,v)\rangle*\langle(v,u)\rangle|}\Ind^{\Nw\times\Lu}_{\Delta_m\cdot \langle (w,u)\rangle}  (\chi_{(w,u)})\,.
\end{align*}
By Lemma~\ref{lem starproduct}, one has
\begin{align*}
|\langle(w,v)\rangle*\langle(v,u)\rangle|=\frac{|w|\cdot |u|}{\gcd(|w|,|v|,|u|)}\quad\text{and}\quad |K_m|=\frac{|v|\cdot \gcd(|w|,|v|,|u|)}{\gcd(|w|,|v|)\cdot \gcd(|v|,|u|)}\,,
\end{align*}
and so
\begin{align*}
|K_m||(\langle(w,v)\rangle*\langle(v,u)\rangle):\langle(w,u)\rangle|&=\frac{|v|\cdot \gcd(|w|,|v|,|u|)}{\gcd(|w|,|v|)\cdot \gcd(|v|,|u|)}\cdot \frac{\gcd(|w|,|u|)}{\gcd(|w|,|v|,|u|)}\\&
=\frac{|v|\cdot \gcd(|w|,|u|)}{\gcd(|w|,|v|)\cdot \gcd(|v|,|u|)}\,.
\end{align*}
Thus,
\begin{align*}
\frac{\lambda\cdot |\langle (w,u)\rangle|}{|K_m|\cdot |\langle(w,v)\rangle*\langle(v,u)\rangle|}&=\frac{|w|\cdot |v|}{\gcd(|w|,|v|)\cdot\gcd(|v|,|u|)}\cdot \frac{\gcd(|w|,|v|)\cdot \gcd(|v|,|u|)}{|v|\cdot \gcd(|w|,|u|)}\\&
=\frac{|w|}{\gcd(|w|,|u|)}\,.
\end{align*}
Therefore,
\begin{align*}
   \Psi^{N,w}_{M,v}(\Delta(T,\beta,Z))\circ\Psi^{M,v}_{L,u}(\Delta(Y,\alpha,X))
   & = \sum_{m\in Z^v\backslash M^v/Y^v} \frac{|w|}{\gcd(|w|,|u|)}
   \Ind^{\Nw\times\Lu}_{\Delta_m\cdot \langle (w,u)\rangle}  (\chi_{(w,u)})\\
   & = \sum_{m\in Z^v\backslash M^v/Y^v} \Psi^{N,w}_{L,u}(\Delta_m)
\end{align*}
as was to be shown.
\end{proof}

\begin{lemma}\label{lem composition}
Assume the hypotheses and notation from Theorem~\ref{thm compositionformula} and let $m\in M$. Then the element
\begin{equation*}
   \chi_{w,v} \otimes_{K_m}\lexp{(m,1)}{\chi_{v,u}} \in R T(\Delta(T,*,X)_m)
\end{equation*}
is equal to $0$ if $Z m Y$ is not $v$-invariant and it is equal to $\frac{1}{|K_m|}\chi_{w,u}$ if $m\in M^v$.
\end{lemma}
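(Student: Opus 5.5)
The plan is to compute everything at the level of one-dimensional characters, using the description of $\otimes_{K_m}$ from \cite{tensorbimodules}, and then to sum over characters using orthogonality.

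\textbf{Setup.} Write
\[
A:=\Delta(T,\beta,Z)\langle(w,v)\rangle, \qquad B:={}^{(m,1)}\bigl(\Delta(Y,\alpha,X)\langle(v,u)\rangle\bigr), \qquad C:=A*B=\Delta(T,*,X)_m .
\]
Each of these groups has a normal Sylow $p$-subgroup with cyclic $p'$-quotient, so Remark~\ref{rem chi} applies. Expand the two factors as
\[
\chi^A_{(w,v)}=\frac{1}{|A^\natural|}\sum_{\theta\in A^\natural}\tilde\theta\bigl((w,v)^{-1}\bigr)\,k_\theta,
\qquad
{}^{(m,1)}\chi_{(v,u)}=\frac{1}{|B^\natural|}\sum_{\eta\in B^\natural}\tilde\eta\bigl(({}^mv,u)^{-1}\bigr)\,k_\eta .
\]
By $R$-bilinearity the element in question is a double sum of terms $k_\theta\otimes_{kK_m}k_\eta$.

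\textbf{Step 1: one-dimensional tensor products.} First I would record the following. The product $k_\theta\otimes_{kK_m}k_\eta$ is zero unless $\theta(1,b)=\eta(b,1)$ for all $b\in K_m$, with inverses arranged according to the left/right conventions of \cite{tensorbimodules}. In the compatible case it equals $k_{\theta*\eta}$, where
\[
(\theta*\eta)(c,a):=\theta(c,b)\,\eta(b,a)
\]
for any $b$ with $(c,b)\in A$ and $(b,a)\in B$. Compatibility is exactly what makes this well defined. This reduces the lemma to a character sum on $C^\natural$. The coefficient of $k_\rho$, for $\rho\in C^\natural$, is
\[
\frac{1}{|A^\natural||B^\natural|}\sum \tilde\theta(w,v)^{-1}\,\tilde\eta({}^mv,u)^{-1},
\]
the sum running over the compatible pairs $(\theta,\eta)$ with $\theta*\eta=\rho$.

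\textbf{Step 2: the fibres.} Next I would describe the fibres of $(\theta,\eta)\mapsto\theta*\eta$. Two compatible pairs have the same image exactly when they differ by a pair $(\varepsilon\circ p_2,\ \varepsilon^{-1}\circ p_1)$ coming from a character $\varepsilon$ of the $p'$-part of $p_2(A)\cap p_1(B)$ modulo what $C$ sees. Every $\rho$ is hit, since restriction of characters along $A\times B\supseteq$ the fibre product onto $C$ is surjective, as in Remark~\ref{rem chi}(c). Lemma~\ref{lem starproduct}(a),(d) supplies the orders needed for this bookkeeping.

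\textbf{Step 3: the case $m\in M^v$.} Then ${}^mv=v$, and $b=v$ is a valid middle element for the pair $(w,u)\in C$. Hence $\tilde\theta(w,v)\tilde\eta(v,u)=\tilde\rho(w,u)$ is constant on each fibre. The coefficient of $k_\rho$ is therefore $(\text{fibre size})\cdot\tilde\rho(w,u)^{-1}/(|A^\natural||B^\natural|)$. Using $|A^\natural|=|\langle(w,v)\rangle|$ and $|B^\natural|=|\langle(v,u)\rangle|$, together with the count of $|C^\natural|$ from Lemma~\ref{lem starproduct}(c),(d), I expect this to reduce to $\frac{1}{|K_m|}\cdot\frac{1}{|C^\natural|}\tilde\rho((w,u)^{-1})$. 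That is precisely the coefficient of $k_\rho$ in $\frac{1}{|K_m|}\chi^C_{(w,u)}$.

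\textbf{Step 4: the case $ZmY$ not $v$-invariant (main obstacle).} In general $({}^mv,u)$ and $(w,v)$ do not compose inside $C$: ${}^mv=[m,v]\,v$, and whether $[m,v]$ can be absorbed into the $p$-parts $Z$ and ${}^mY$ is governed by whether $Zv(m)Y=ZmY$. When $ZmY$ is not $v$-stable, I would show that the function $(\theta,\eta)\mapsto\tilde\theta(w,v)^{-1}\tilde\eta({}^mv,u)^{-1}$ on a fibre is a nonconstant character of the fibre group, i.e.\ of the $\varepsilon$'s from Step 2. Then orthogonality kills every coefficient. The first thing I would try is to identify an element $v^j\in\langle v\rangle$ with $v^j\notin C_{\langle v\rangle}(m)$ modulo the relevant $p$-subgroups, using Lemma~\ref{lem starproduct}(a), on which some $\varepsilon$ takes a nontrivial value.

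The delicate point is getting the left/right inverse conventions of the $\otimes_{K_m}$ formula right, so that compatibility and constancy on fibres come out correctly. I expect this, rather than the orthogonality itself, to be the main obstacle.
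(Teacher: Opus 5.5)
Your framework is a legitimate alternative to the paper's. You expand both factors over $A^\natural$ and $B^\natural$ and sum over the fibres of $(\theta,\eta)\mapsto\theta*\eta$; the paper instead expands over characters of the ambient groups and evaluates the resulting function on $\Delta(T,*,X)_m$ pointwise. Your Step~3 goes through, and it does not even need Lemma~\ref{lem starproduct}(c),(d). The compatibility condition must read $\theta(1,s)\eta(s,1)=1$ for $s\in K_m$ to match your formula for $\theta*\eta$. With that, the compatible pairs form a subgroup of index $|K_m|$ in $A^\natural\times B^\natural$ mapping onto $C^\natural$, so every fibre has $|A^\natural||B^\natural|/(|K_m|\,|C^\natural|)$ elements. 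The genuine gap is the vanishing statement. Step~2 misdescribes the fibres, and with that description Step~4 cannot succeed. Suppose the fibres were orbits of pairs $(\varepsilon\circ p_2,\varepsilon^{-1}\circ p_1)$ for a single character $\varepsilon$ defined on a group containing $p_2(A)=Z\langle v\rangle$ and $p_1(B)={}^m(Y\langle v\rangle)$. Then the summand would be multiplied by $\tilde\varepsilon(v)^{-1}\tilde\varepsilon({}^mv)=1$, because ${}^mv\,v^{-1}\in M$ is a $p$-element. So it would be constant on fibres, and orthogonality could never give $0$. A character of $p_2(A)\cap p_1(B)$ alone does not even define $\varepsilon\circ p_2$ on $A$. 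Your suggested remedy via $v^j\notin C_{\langle v\rangle}(m)$ and Lemma~\ref{lem starproduct}(a) concerns $K_m$, i.e.\ compatibility, not the fibres.

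Here is what is actually true. Suppose $\theta_0(c,b)\eta_0(b,a)=1$ whenever $(c,b)\in A$ and $(b,a)\in B$. Taking $b=1$ shows $\theta_0=\varepsilon_1\circ p_2$ and $\eta_0=\varepsilon_2\circ p_1$, with $\varepsilon_1\in(Z\langle v\rangle)^\natural$ and $\varepsilon_2\in({}^m(Y\langle v\rangle))^\natural$ \emph{independent}. They are subject only to $\varepsilon_1\varepsilon_2=1$ on $I:=Z\langle v\rangle\cap{}^m(Y\langle v\rangle)$. Moving along a fibre multiplies the summand by $\psi(\varepsilon_1,\varepsilon_2)=\tilde\varepsilon_1(v)^{-1}\tilde\varepsilon_2({}^mv)^{-1}$. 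Now assume $I\cap Mv=\emptyset$. Then the image of $I$ in the cyclic $p'$-group $M\langle v\rangle/M$ is a proper subgroup. So there is a character $\varepsilon$ of $M\langle v\rangle/M$ that is trivial on that image with $\varepsilon(vM)\neq1$. Let $\pi$ be the projection; the pair $(\varepsilon_1,\varepsilon_2)=(\varepsilon\circ\pi|_{Z\langle v\rangle},1)$ has $\psi\neq1$. Since $R$ is a domain, every fibre sum, hence every coefficient, vanishes. Finally, $I\cap Mv\neq\emptyset$ means $zv={}^m(yv)=my\,v(m)^{-1}v$ for some $z\in Z$, $y\in Y$, i.e.\ $v(m)=z^{-1}my\in ZmY$. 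This is exactly the computation $zv=b=m\alpha(x)vm^{-1}$ that drives the paper's proof. Your heuristic about absorbing $[m,v]$ gestures at it, but the proposal leaves it open. With it, your argument closes.
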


\begin{proof}
We abbreviate
\begin{equation*}
   H_\alpha:=\Delta(Y,\alpha,X)\langle (v,u)\rangle\quad\text{and}\quad
   H_\beta:=\Delta(T,\beta,Z)\langle(w,v)\rangle\,.
\end{equation*}
Then, since $\chi_{(w,v)}^{H_\beta}= \Res^{\Nw\times \Mv}_{H_\beta} (\chi_{(w,v)}^{\Nw\times \Mv})$ and 
$\chi^{H_\alpha}_{(v,u)} = \Res^{\Mv\times\Lu}_{H_\alpha}(\chi_{(v,u)}^{\Mv\times\Lu})$, we have
\begin{align}\label{eqn chis}
   & |w||v|^2|u| \cdot \Bigl( \chi_{(w,v)}^{H_\beta} \otimes_{K_m} \lexp{(m,1)}{\chi^{H_\alpha}_{(v,u)}} \Bigr) \\
   = & \Biggl( \sum_{\substack{\nu\in \Nw^\natural\\ \mu'\in\Mv^\natural}} \tilde{\nu}(w)^{-1}\tilde{\mu}'(v)^{-1} k_{(\nu\times\mu')_{H_\beta}}\Biggr)
      \otimes_{K_m} \lexp{(m,1)}{\Biggl( \sum_{\substack{\mu\in\Mv^\natural\\ \lambda\in\Lu^\natural}}
      \tilde{\mu}(v)^{-1}\tilde{\lambda}(u)^{-1} k_{(\mu\times\lambda)|_{H_\alpha}} \Biggr)} \notag \\
   = & \sum_{\nu,\mu',\mu,\lambda} \tilde{\nu}(w)^{-1}\tilde{\mu}'(v)^{-1} \tilde{\mu}(v)^{-1}\tilde{\lambda}(u)^{-1}
      \Bigl( k_{(\nu\times\mu')|_{H_\beta}} \otimes_{K_m} k_{(\lexp{m}{\mu}\times\lambda)|_{\lexp{(m,1)}{H_\alpha} } }\Bigr)\,. \notag
\end{align}
Moreover, by Theorem 1.1 of~\cite{tensorbimodules},
\begin{equation*}
    k_{(\nu\times\mu')|_{H_\beta}} \otimes_{K_m} k_{(\lexp{m}{\mu}\times\lambda)|_{\lexp{(m,1)}{H_\alpha} } }
    = \begin{cases}
        0\,, & \text{if $(\mu'\cdot\lexp{m}{\mu})|_{K_m}\neq 1$,}\\
        k_{\rho_{\nu,\mu',\mu,\lambda}}\,, & \text{if $(\mu'\cdot\lexp{m}{\mu})|_{K_m} = 1$,}
     \end{cases}
\end{equation*}
where $\rho_{\nu,\mu',\mu,\lambda}\in (H_\beta * \lexp{(m,1)}{H_\alpha})^\natural = \Delta(T,*,X)_m^\natural$ is given by
\begin{equation*}
   \rho_{\nu,\mu',\mu,\lambda}(c,a) = \nu(c)\mu'(b)\lexp{m}{\mu}(b)\lambda(a)\,,
\end{equation*}
where $b\in\Mv$ is such that 
\begin{equation*}
   (c,b)\in H_\beta\quad\text{and} \quad(b^m,a)\in H_\alpha\,.
\end{equation*} 
Thus, the function 
\begin{equation*}
   f\colon \Delta(T,*,X)_m\to R
\end{equation*} 
from Remark~\ref{rem chi}(a) associated to the element in (\ref{eqn chis}) is given for $(c,a)\in\Delta(T,*,X)_m$ by
\begin{align}\label{eqn product form}
   f(c,a) & =   \sum_{\substack{\nu,\mu',\mu,\lambda\\ (\mu'\lexp{m}\mu)|_{K_m}=1} }
   \tilde{\nu}(w)^{-1} \tilde{\mu}'(v)^{-1} \tilde{\mu}(v)^{-1} \tilde{\lambda}(u)^{-1} \tilde{\nu}(c) \tilde{\mu}'(b) \tilde{\mu}(m^{-1}bm) \tilde{\lambda}(a) \notag \\
   & = \Bigl(\sum_{\nu\in \Nw^\natural} \tilde{\nu}(w^{-1}c)\Bigr) \Bigl(\sum_{\lambda\in\Lu^\natural} \tilde{\lambda}(u^{-1}a)\Bigr)
   \Bigl(\sum_{\substack{\mu,\mu'\in \Mv^\natural\\ (\mu'\cdot \lexp{m}{\mu})|_{K_m} =1}} \tilde{\mu}'(v^{-1}b) \tilde{\mu}(v^{-1}m^{-1}bm)\Bigr)
\end{align}
The first sum is non-zero if and only if $w^{-1}c\in N$, in which case it is equal to $|w|$. The second sum is non-zero if and only if $u^{-1}a\in L$, in which case it is equal to $|u|$.  Since
	\begin{equation*}
	\tilde{\mu}(v^{-1}m^{-1}bm)=\tilde{\mu}(v^{-1}b)
	\quad\text{and}\quad
	(\lexp{m}{\mu})|_{K_m}=\mu|_{K_m}\,,
	\end{equation*}
	putting $\eta=\mu'\mu$, the third sum becomes
	\begin{equation*}
	|v|\sum_{\substack{\eta\in \Mv^\natural\\ \eta|_{K_m}=1}}
	\tilde{\eta}(v^{-1}b).
	\end{equation*}
	Hence, the third sum is non-zero if and only if $v^{-1}b\in MK_m$ which holds if and only if there exists $s\in K_m$ such that $bs\in Mv$.
Thus, after replacing $b$ with $bs$, if the element in (\ref{eqn chis}) is non-zero, there exists $(c,a)\in \Delta(T,*,X)_m$ and $b\in Mv$ such that
\begin{equation*}
   c\in Nw\,, \quad a\in Lu\,, \quad (c,b)\in H_\beta\,,\quad\text{and}\quad (b^m,a)\in H_\alpha\,. 
\end{equation*}
We claim that this implies that $ZmY$ is $v$-invariant.

\smallskip
Since $(c,b)\in H_\beta$, $c\in Nw$, and $b\in Mv$, there exists $z\in Z$ such that
	\begin{equation*}
	(c,b)=(\beta(z)w,zv).
	\end{equation*}
	Similarly, since $(b^m,a)\in H_\alpha$, $b^m\in Mv$ and $a\in Lu$, there exists $x\in X$ such that
	\begin{equation*}
	(b^m,a)=(\alpha(x)v,xu).
	\end{equation*}

%\smallskip
%In fact, since $(c,b)\in H_\beta$ and $(b^m,a)\in H_\alpha$, there exist $r,s\in\ZZ$, $z\in Z$, and $x\in X$ such that
%\begin{equation*}
%   (c,b)=(\beta(z) w^s, zv^s)\quad\text{and}\quad (b^m,a)=(\alpha(x)v^r,xu^r)\,.
%\end{equation*}
%Since $c\in Nw$, $c=\beta(z)w^s$ implies $w=w^s$ and since $a\in Lu$, $a=xu^r$ implies $u^r=u$. 
Further, this implies
\begin{equation*}
   zv=b= m\alpha(x)vm^{-1} = m\alpha(x)v(m^{-1}) v\,.
\end{equation*}
Thus, $v(m)=z^{-1}m\alpha(x)\in ZmY$, as claimed.

\smallskip
We have therefore shown that if $ZmY$ is not $v$-invariant then the element in (\ref{eqn chis}) is equal to $0$, which is the first statement of the Lemma.
Now suppose that $v(m)=m$. By Lemma~\ref{lem starproduct}(c), it suffices to show that the function $f\colon \Delta(T,*,X)_m\to R$ from above maps elements in $\Delta_m(w,u)$ to $|w||v|^2|u|/|K_m|$ and vanishes on all other elements. 
We have already seen that $f$ vanish on $(c,a)$ unless $(c,a)\in (N\times L)(w,u)$, which implies $(c,a)\in \Delta(T,*,X)_m\cap (N\times L)(w,u) = \Delta_m(w,u)$ by Lemma~\ref{lem starproduct}(b) and (c). Thus, it suffices to show that $f(w,u)=|w||v|^2|u|/|K_m|$. But for $(c,a)=(w,u)$ we can choose $b=v$ and Equation~(\ref{eqn product form}) gives the desired result, since $v^{-1}m^{-1}vm=v^{-1}(m^{-1}) m\in M$ and since the last sum in (\ref{eqn product form}) has $|v|^2/|K_m|$ summands.
\end{proof}

\begin{notation}
Let $(L,u)$ be a $\DD$-pair, let $X\le L$ be $v$-invariant, and let $u'\in\Aut(L)$ be the restriction of $u$. then $(X,u')$ is a $\DD$-subpair of $(L,u)$.
We set 
\begin{equation*}
   \Res^{L,u}_{X,u'}:=\Psi^{X,u'}_{L,u} (\Delta(X))\quad\text{and}\quad \Ind_{X,u'}^{L,u}:= \Psi_{X,u'}^{L,u}(\Delta(X))\,,
\end{equation*}
where $\Delta(X):=\Delta(X,\id_X,X)$.
Moreover, if $(L,u)$ and $(M,v)$ are $\DD$-pairs and $\alpha\colon L\to M$ is an isomorphism of $\DD$-pairs then we define
\begin{equation*}
   \Isom(\alpha):=\Psi_{L,u}^{M,v}(\Delta(M,\alpha,L))\,.
\end{equation*}
\end{notation}

\begin{theorem}\label{thm elementarybisets}
Let $(L,u)$ and $(M,v)$ be $\DD$-pairs and let $\Delta(Y,\alpha,X)\le M\times L$ be a $(v,u)$-invariant diagonal subgroup. Then $X$ is $u$-invariant and $Y$ is $v$-invariant. Let $(X,u')$ and $(Y,v')$ denote the resulting $\DD$-subpairs of $(L,u)$ and $(M,v)$, see Remark~\ref{rem subpairs}(a). Then $\alpha$ is an isomorphism between the $\DD$-pairs $(X,u')$ and $(Y,v')$. Moreover, one has
\begin{align*}
   \Psi^{M,v}_{L,u}(\Delta(Y,\alpha,X)) &= \Psi^{M,v}_{Y,v'}(\Delta(Y))\circ \Psi^{Y,v'}_{X,u'}(\Delta(Y,\alpha,X))\circ \Psi^{X,u'}_{L,u}(\Delta(X))\\
   &=\Ind^{M,v}_{Y,v'}\circ\Isom(\alpha)\circ \Res^{L,u}_{X,u'}\,.
\end{align*}
\end{theorem}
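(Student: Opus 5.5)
First I handle the preliminary assertions. The group $\Delta(Y,\alpha,X)$ is $(v,u)$-invariant, so for $x\in X$ we have $(v(\alpha(x)),u(x))\in\Delta(Y,\alpha,X)$. Hence $u(x)\in X$ and $\alpha(u(x))=v(\alpha(x))$. This shows $u(X)\subseteq X$, and since $u$ is bijective on the finite group $L$, $u(X)=X$. Likewise, the first projection gives $v(Y)=Y$. The identity $\alpha\circ u'=v'\circ\alpha$ says precisely that $\alpha$ is a morphism, hence an isomorphism, of $\DD$-pairs $(X,u')\to(Y,v')$. In particular $\Delta(Y)$, $\Delta(Y,\alpha,X)$ and $\Delta(X)$ are invariant under $(v,v')$, $(v',u')$ and $(u',u)$ respectively, so all three $\Psi$-elements in the statement are defined. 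The second displayed equality is then just the Notation preceding the theorem.

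For the first equality I apply the Composition Formula (Theorem~\ref{thm compositionformula}) twice. I begin with $\Isom(\alpha)\circ\Res^{L,u}_{X,u'}$, taking $(N,w)=(Y,v')$, $(M,v)=(X,u')$ and $(L,u)=(L,u)$. The left factor is $\Delta(T,\beta,Z)=\Delta(Y,\alpha,X)$ and the right factor is $\Delta(X,\id,X)\le X\times L$. The double coset index set is $Z^{u'}\backslash X^{u'}/X^{u'}$ with $Z=X$, so it has exactly one element, $m=1$. The resulting subgroup is
\[
\Delta\bigl(\alpha(X\cap X),\alpha\circ\id,\id^{-1}(X\cap X)\bigr)=\Delta(Y,\alpha,X)\le Y\times L .
\]
So this composite equals $\Psi^{Y,v'}_{L,u}(\Delta(Y,\alpha,X))$.

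Next I compose on the left with $\Psi^{M,v}_{Y,v'}(\Delta(Y))$, taking $(N,w)=(M,v)$, middle pair $(Y,v')$, and right pair $(L,u)$. Here $\Delta(T,\beta,Z)=\Delta(Y,\id,Y)$ and $\Delta(Y',\alpha',X')=\Delta(Y,\alpha,X)$. The index set is $Y^{v'}\backslash Y^{v'}/Y^{v'}$, again a single element $m=1$. The resulting subgroup is $\Delta(\id(Y\cap Y),\id\circ\alpha,\alpha^{-1}(Y))=\Delta(Y,\alpha,X)\le M\times L$, which gives $\Psi^{M,v}_{L,u}(\Delta(Y,\alpha,X))$. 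Composition in $R\calD^\Delta$, the tensor product, is associative, so bracketing does not matter.

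I expect the only real obstacle to be bookkeeping. I must check that the Composition Formula is applied with the roles of its pairs correctly matched, and that its normalising constants $\frac{|w|}{\gcd(|w|,|u|)}$ are already absorbed in the definition of $\Psi$. The formula as stated already expresses the composite as a plain sum of $\Psi$'s, so no extra scalar appears. I also need the orders of $u'$ and $v'$ (which may differ from $|u|,|v|$); these are handled automatically because the Composition Formula holds for arbitrary $\DD$-pairs.
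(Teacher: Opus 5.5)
Your proof is correct and follows the paper's argument: two applications of the Composition Formula (Theorem~\ref{thm compositionformula}), each with a single double coset $m=1$. The only difference is the bracketing, which is immaterial by associativity: you compute $\Isom(\alpha)\circ\Res^{L,u}_{X,u'}=\Psi^{Y,v'}_{L,u}(\Delta(Y,\alpha,X))$ first, whereas the paper first computes $\Ind^{M,v}_{Y,v'}\circ\Isom(\alpha)=\Psi^{M,v}_{X,u'}(\Delta(Y,\alpha,X))$.
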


\begin{proof}
The first statements are obvious and the last statement follows immediately from Theorem~\ref{thm compositionformula}. 
Indeed,
\begin{align*}
   \left(\Psi^{M,v}_{Y,v'}(\Delta(Y))\circ \Psi^{Y,v'}_{X,u'}(\Delta(Y,\alpha,X))\right)\circ \Psi^{X,u'}_{L,u}(\Delta(X))
   &=\Psi^{M,v}_{X,u'}(\Delta(Y,\alpha,X))\circ \Psi^{X,u'}_{L,u}(\Delta(X))\\
   &=\Psi^{M,v}_{L,u}(\Delta(Y,\alpha,X))\,.
\end{align*}
\end{proof}

\begin{corollary}
The category $\FF\calD^\Delta$ is generated as a preadditive category by the morphisms $\Ind$, $\Isom$ and $\Res$.
\end{corollary}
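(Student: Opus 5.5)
The plan is to combine the basis theorem with the factorization of basis elements. Here $\FF$ is presumably $R$. The preadditive subcategory generated by $\Ind$, $\Isom$ and $\Res$ is the smallest subcategory of $\FF\calD^\Delta$ containing these morphisms and closed under composition and $R$-linear combinations. It suffices to show that every morphism $E_{M,v}RT^\Delta(\Mv,\Lu)E_{L,u}$ from $(L,u)$ to $(M,v)$ is an $R$-linear combination of composites of such morphisms. We also need to check that the identity morphisms lie in this subcategory.

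By Theorem~\ref{thm basis}, the hom-space $\Hom_{R\calD^\Delta}((L,u),(M,v))$ is a free $R$-module with basis the elements $\Psi^{M,v}_{L,u}(Q)$. Here $Q=\Delta(Y,\alpha,X)$ runs over representatives of the conjugacy classes of $(v,u)$-invariant diagonal subgroups. So it is enough to express each such basis element as a composite of generators. This is exactly what Theorem~\ref{thm elementarybisets} provides:
\[
\Psi^{M,v}_{L,u}(\Delta(Y,\alpha,X))=\Ind^{M,v}_{Y,v'}\circ\Isom(\alpha)\circ\Res^{L,u}_{X,u'}.
\]
In this identity $X$ is $u$-invariant, $Y$ is $v$-invariant, and $\alpha\colon(X,u')\to(Y,v')$ is an isomorphism of $\DD$-pairs. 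Hence each factor is one of the designated generating morphisms. Taking $R$-linear combinations then gives every morphism.

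For the identities, I would show that $\Id_{L,u}=E_{L,u}k\Lu E_{L,u}$ equals $\Psi^{L,u}_{L,u}(\Delta(L))$, which is a generator of each of the three types. The normalizing factor is $|u|/\gcd(|u|,|u|)=1$. Since $k\Lu=\Ind^{\Lu\times\Lu}_{\Delta(\Lu)}(k)$, Lemma~\ref{lem theproductwithidempotents} applied with $H=\Delta(\Lu)=\Delta(L)\langle(u,u)\rangle$ and $\theta=1$ gives
\[
E_{L,u}\,k\Lu\,E_{L,u}=\Ind_{H}(\chi_{(u,u)}^{H}),
\]
which is $\Psi^{L,u}_{L,u}(\Delta(L))$. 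Alternatively, the identity is in any case an $R$-combination of basis elements and is therefore already covered by the first step.

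The only step needing care is the identification in the factorization: the composite provided by Theorem~\ref{thm elementarybisets} must genuinely consist of the named generators. This relies on the composition formula of Theorem~\ref{thm compositionformula}, which involves a single double coset $m=1$, since $Z=M$ or $Y=M$ in the relevant factors. That case is already handled in the paper, so I expect no real obstacle; the corollary follows directly.
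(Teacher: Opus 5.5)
Your proposal is correct and follows the paper's proof: Theorem~\ref{thm basis} supplies the $R$-basis $\Psi^{M,v}_{L,u}(Q)$ of each hom-space, and Theorem~\ref{thm elementarybisets} factors each basis element as $\Ind\circ\Isom\circ\Res$. Your extra check that $E_{L,u}\,k\Lu\,E_{L,u}=\Psi^{L,u}_{L,u}(\Delta(L))$ is also correct, though, as you note yourself, the basis argument already covers the identities.
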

\begin{proof}
This follows from Theorems~\ref{thm basis} and \ref{thm elementarybisets}.
\end{proof}
\medskip

%%%%%%%%%%%%%%%%%%%%%%%%%%%%%%%% SECTION 11 %%%%%%%%%%%%%%%%%%%%%

\section{The main theorem}\label{sec main theorem}

We keep the hypothesis on $R$ from the beginning of Section~\ref{sec an equivalence}.

\smallskip
For $\DD$-pairs $(L,u)$ and $(M,v)$ and an $((M,v),(L,u))$-biset $\Omega$, which we view as a left $(M\times L)\langle(v,u)\rangle$-set, we define $\lexp{(v,1)}{Q}$ as the $(M\times L)\langle(v,u)\rangle)$-biset with the same underlying set $\Omega$ and an element $a\in (M\times L)\langle(v,u)\rangle$ now acting as $a^{(v,1)}$ acted before. This defines a category automorphism of $\pbiset{(M,v)}{(L,u)}$. If $\Omega=(M\times L)/Q$ is transitive with $(v,u)$-invariant $Q\le M\times L$ (see Proposition~\ref{prop transitivebisets}) then $\lexp{(v,1)}{\Omega}\cong (M\times L)/\lexp{(v,1)}{Q}$.

\begin{lemma}\label{lem conjugatesontheleft}
	Let $(L,u)$, $(M,v)$ and $(N,w)$ be $\DD$-pairs. Let
	\[
	\Gamma=(N\times M)/\Delta(T,\beta,Z)
	\]
	be a transitive diagonal $((N,w),(M,v))$-biset, and let
	\[
	\Omega=(M\times L)/\Delta(Y,\alpha,X)
	\]
	be a transitive diagonal $((M,v),(L,u))$-biset. Assume that
	$\Delta(T,\beta,Z)$ is $(w,v)$-invariant and that $\Delta(Y,\alpha,X)$ is
	$(v,u)$-invariant. Then
	\[
	\Gamma\times_{M,v}\lexp{(v,1)}\Omega \cong \lexp{(w,1)}(\Gamma\times_{M,v}\Omega)
	\]
	and
	\[
	\lexp{(w,1)}\Gamma\times_{M,v}\Omega \cong \lexp{(w,1)}(\Gamma\times_{M,v}\Omega).
	\]
\end{lemma}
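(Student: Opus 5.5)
The plan is to prove both isomorphisms by comparing explicit decompositions into transitive pieces. Proposition~\ref{prop compositionformulabisets} gives such a decomposition for each side. By Proposition~\ref{prop transitive diagonal bisets}, transitive diagonal bisets are classified by $(N^w\times L^u)$-conjugacy classes of stabilizers, so it suffices to match the terms of the two decompositions.

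First I record what conjugation by $(v,1)$ does. Since $\Delta(Y,\alpha,X)$ is $(v,u)$-invariant, $Y$ is $v$-invariant and $\alpha\circ u|_X=v\circ\alpha$. Hence
\[
\lexp{(v,1)}{\Delta(Y,\alpha,X)}=\{(v\alpha(x),x)\mid x\in X\}=\Delta(Y,v\alpha,X),
\]
and this subgroup is again $(v,u)$-invariant, because $v$ commutes with $v\alpha$ and $v\alpha u=vv\alpha$ on $X$. Similarly, $\lexp{(w,1)}{\Delta(T,\beta,Z)}=\Delta(T,w\beta,Z)$. The remark preceding the lemma gives $\lexp{(v,1)}\Omega\cong(M\times L)/\Delta(Y,v\alpha,X)$ and $\lexp{(w,1)}\Gamma\cong(N\times M)/\Delta(T,w\beta,Z)$. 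Conjugation by $(w,1)$ on $(N\times L)$-sets also commutes with disjoint unions. So $\lexp{(w,1)}(\Gamma\times_{M,v}\Omega)$ is the disjoint union over $m\in Z^v\backslash M^v/Y^v$ of the sets $(N\times L)/\lexp{(w,1)}\Delta_m$, where
\[
\lexp{(w,1)}\Delta_m=\Delta\bigl(\beta(Z\cap\lexp mY),\,w\beta i_m\alpha,\,\alpha^{-1}(Z^m\cap Y)\bigr).
\]

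For the second isomorphism, apply Proposition~\ref{prop compositionformulabisets} with $\beta$ replaced by $w\beta$. The groups $Z$, $T$, $Y$, $X$ and the index set $Z^v\backslash M^v/Y^v$ are unchanged, and $(w\beta)(Z\cap\lexp mY)=\beta(Z\cap\lexp mY)$ because $\beta(Z\cap\lexp mY)$ is $w$-invariant when $m\in M^v$. The resulting summands are exactly $(N\times L)/\Delta(\beta(Z\cap\lexp mY),w\beta i_m\alpha,\alpha^{-1}(Z^m\cap Y))$, which match the terms above.

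For the first isomorphism, Proposition~\ref{prop compositionformulabisets} applied with $\alpha$ replaced by $v\alpha$ gives summands
\[
\Delta\bigl(\beta(Z\cap\lexp mY),\,\beta i_m v\alpha,\,\alpha^{-1}v^{-1}(Z^m\cap Y)\bigr).
\]
Since $m\in M^v$ and $Z,Y$ are $v$-invariant, $v^{-1}(Z^m\cap Y)=Z^m\cap Y$. Moreover, on $Z^m\cap Y$,
\[
\beta i_m v=\beta v i_m=w\beta i_m,
\]
using $i_mv=vi_m$ (because $v(m)=m$) and $\beta v=w\beta$ on $Z$. So these subgroups coincide term by term with $\lexp{(w,1)}\Delta_m$.

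I expect the main obstacle to be the bookkeeping that justifies these term-by-term identifications. One must check that each rewritten stabilizer is genuinely $(w,u)$-invariant. One must also check that the equalities are equalities of subgroups, not just up to $N^w\times L^u$-conjugacy, so that the same double coset representatives $m$ can be used on both sides. Both checks reduce to the commutation relations $\beta v=w\beta$, $\alpha u=v\alpha$ and $i_mv=vi_m$ for $m\in M^v$.
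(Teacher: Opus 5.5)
Your proposal is correct and is essentially the paper's proof. Like the paper, you identify $\lexp{(v,1)}{\Omega}\cong (M\times L)/\Delta(Y,v\alpha,X)$ and apply Proposition~\ref{prop compositionformulabisets}. You then use $v^{-1}(Z^m\cap Y)=Z^m\cap Y$, $i_m v=v i_m$ for $m\in M^v$, and $\beta v=w\beta$ to recognize each summand as $(N\times L)/\lexp{(w,1)}{\Delta_m}$. The only difference is that you write out the second isomorphism (replacing $\beta$ by $w\beta$), which the paper dismisses as ``proved similarly.''
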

\begin{proof}
	We have
	\[
	\lexp{(v,1)}\Omega\cong (M\times L)/\Delta(Y,v\alpha,X).
	\]
	By Proposition~\ref{prop compositionformulabisets},
	\[
	\Gamma\times_{M,v}\lexp{(v,1)}\Omega\cong \coprod_{m\in Z^v\backslash M^v/Y^v}(N\times L)/\Delta_m,
	\]
	where
	\[
	\Delta_m=\Delta\bigl(
	\beta(Z\cap \lexp{m}Y),
	\beta i_m v\alpha,
	\alpha^{-1}v^{-1}(Z^m\cap Y)
	\bigr).
	\]
	Since $m\in M^v$, we have $i_m\circ v=v\circ i_m$. Also, the groups
	$Z$ and $Y$ are $v$-invariant. So,
	\[
	v^{-1}(Z^m\cap Y)=Z^m\cap Y
	\]
	and
	\[
	\beta i_m v\alpha=\beta v i_m\alpha=w\beta i_m\alpha,
	\]
	since $w\beta=\beta v$. Moreover, since $Z\cap\lexp{m}Y$ is $v$-invariant and $w\beta=\beta v$, the subgroup
	\[
	\beta(Z\cap{}^mY)
	\]
	is $w$-invariant. Therefore,
	\[
	\Delta_m=
	\Delta\bigl(\beta(Z\cap\lexp{m}Y),
	\beta i_m v\alpha,
	\alpha^{-1}v^{-1}(Z^m\cap Y)
	\bigr)=\lexp{(w,1)}
	\Delta\bigl(
	\beta(Z\cap\lexp{m}Y),
	\beta i_m\alpha,
	\alpha^{-1}(Z^m\cap Y)
	\bigr).
	\]
	This proves the first isomorphism. The second one is proved similarly.
\end{proof}

\begin{notation}
	For $\DD$-pairs $(M,v)$ and $(L,u)$, let
	\[
	\calI^\Delta((M,v),(L,u))
	\]
	be the $R$-submodule of $R B^\Delta((M,v),(L,u))$ generated by the elements
	\[
	[\Omega]-[\lexp{(v,1)}{\Omega}],
	\]
	where $\Omega$ runs over the set of transitive diagonal $((M,v),(L,u))$-bisets.
	By Lemma~\ref{lem conjugatesontheleft}, these subspaces form an ideal in the category	$\FF\calP^\Delta$.
\end{notation}

\begin{definition}
	The quotient category $\overline{R\calP^\Delta}$ is defined as follows.
	Its objects are the $\DD$-pairs, and for two $\DD$-pairs
	$(L,u)$ and $(M,v)$,
	\[
	\Hom_{\overline{R\calP^\Delta}}((L,u),(M,v))=\overline{R B^\Delta}((M,v),(L,u)):=R B^\Delta((M,v),(L,u))/\calI^\Delta((M,v),(L,u)).
	\]
	The composition is induced by the composition in $\FF\calP^\Delta$.
\end{definition}

Note that $\overline{R B^\Delta}((M,v),(L,u))$ is equal to the $\langle(v,1)\rangle$-cofixed points of $RB^\Delta((M,v),(L,u))$ under the action of $\langle(v,1)\rangle$ induced by $\Omega\mapsto\lexp{(v,1)}{\Omega}$. Note that this is a permutation action with $(v,1)$-stable basis given by the transitive $((M,v),(L,u))$-bisets.

For an element $\Omega\in R B^\Delta((M,v),(L,u))$, we write $\overline{\Omega}$ to denote its image in $\overline{R B^\Delta}((M,v),(L,u))$.

\begin{lemma}\label{lem basisofthequotient}
	The set of elements
	\[
	\overline{(M\times L)/Q},
	\]
	where $Q$ runs over a set of representatives of
	$M^v\langle v\rangle\times L^u\langle u\rangle$-conjugacy classes of
	$(v,u)$-invariant diagonal subgroups of $M\times L$, is an $R$-basis of
	\[
	\overline{R B^\Delta}((M,v),(L,u))\,.
	\]
\end{lemma}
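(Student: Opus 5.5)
The plan is to use the general fact about cofixed points of permutation modules. Let $P$ be a free $R$-module with basis $\mathcal{B}$ on which a cyclic group $C=\langle c\rangle$ acts by permuting $\mathcal{B}$. Then the cofixed module $P/\langle b-c\cdot b\mid b\in\mathcal{B}\rangle_R$ is free, with basis the images of a set of representatives of the $C$-orbits on $\mathcal{B}$. To see this, note that the submodule generated by all $x-cx$ with $x\in P$ equals the one generated by the $b-cb$ with $b\in\mathcal{B}$, since $c$ acts $R$-linearly. Then decompose $P=\bigoplus_{\mathcal{O}}R\mathcal{O}$ over the $C$-orbits $\mathcal{O}$; each summand is $C$-stable. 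For each orbit, the augmentation map $R\mathcal{O}\to R$ sending every element of $\mathcal{O}$ to $1$ is surjective. Its kernel is spanned by the differences $b-b'$ with $b,b'\in\mathcal{O}$, and each such difference telescopes into a sum of terms $b_i-cb_i$. So $(R\mathcal{O})_C\cong R$, generated by the image of any single element of $\mathcal{O}$.

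The first step is to apply this with $P=RB^\Delta((M,v),(L,u))$, $C=\langle(v,1)\rangle$, and $\mathcal{B}$ the set of isomorphism classes of transitive diagonal bisets. As noted just before the lemma, $\overline{RB^\Delta}((M,v),(L,u))$ is exactly this cofixed module. By Proposition~\ref{prop transitive diagonal bisets}, $\mathcal{B}$ is in bijection with the $M^v\times L^u$-conjugacy classes of $(v,u)$-invariant diagonal subgroups $Q\le M\times L$, via $Q\mapsto (M\times L)/Q$. Under this bijection the action of $(v,1)$ becomes $[Q]\mapsto[\lexp{(v,1)}{Q}]$, as remarked at the start of Section~\ref{sec main theorem}. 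This is well defined on classes, because $(v,1)$ normalizes $M^v\times L^u$, and $\lexp{(v,1)}{Q}$ is again diagonal and $(v,u)$-invariant.

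It remains to identify the $\langle(v,1)\rangle$-orbits on these $M^v\times L^u$-classes with the $M^v\langle v\rangle\times L^u\langle u\rangle$-conjugacy classes. The group $M^v\langle v\rangle\times L^u\langle u\rangle$ is generated by $M^v\times L^u$, $(v,1)$ and $(1,u)$. Moreover $(1,u)=(v,1)^{-1}(v,u)$, and $(v,u)$ fixes each $Q$ in question because $Q$ is $(v,u)$-invariant. Hence two such subgroups $Q,Q'$ are conjugate under $M^v\langle v\rangle\times L^u\langle u\rangle$ if and only if $Q'$ is $M^v\times L^u$-conjugate to $\lexp{(v,1)^i}{Q}$ for some $i$. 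For this, write any element as $(m v^i,l u^j)$ with $m\in M^v$, $l\in L^u$, and use that $(v^i,u^j)$ acts on $Q$ like $(v^{i-j},1)$. This step is the only one needing care, and it reduces to checking that $\langle v\rangle$ and $\langle u\rangle$ normalize $M^v$ and $L^u$ respectively (they commute with them). Combining the three steps gives the stated basis.
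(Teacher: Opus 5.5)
Your proposal is correct and follows the same route as the paper. The paper also transfers the $\langle(v,1)\rangle$-permutation action on transitive diagonal bisets to the $M^v\times L^u$-classes of $(v,u)$-invariant diagonal subgroups via Proposition~\ref{prop transitive diagonal bisets}, takes cofixed points to obtain a basis indexed by orbits, and absorbs $(1,u)$ using $(v,u)$-invariance. You additionally prove the orbit-basis property of cofixed points and make explicit the decomposition $(v^i,u^j)=(v^{i-j},1)(v^j,u^j)$; the paper leaves both implicit.
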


\begin{proof}
Let $\calS$ be the set of $(v,u)$-invariant diagonal subgroups of $M\times L$. Let $\calT$ denote the standard basis of $RB^\Delta((M,v),(L,u))$ consisting of isomorphism classes of transitive $((M,v),(L,u))$-bisets. By Proposition~\ref{prop transitive diagonal bisets}, the map $Q\mapsto (M\times L)/Q$ induces a bijection $(M^v\times L*u)\backslash \cal S\to \calT$. 
This bijection is $\langle(v,1)\rangle$-equivariant, since $\lexp{(v,1)}{((M\times L)/Q)} \cong (M\times L)/\lexp{(v,1)}{Q}$. Under the $\langle (v,1)\rangle$-cofixed point construction, the $\langle(v,1)\rangle$-orbits of $\calT$ parametrize a basis of $\overline{R B^\Delta}((M,v),(L,u))$. By the above, they are in bijection with the $\langle (v,1)\rangle$-orbits on $(M^v\times L^u)\backslash \calS$. Since every element in $\calS$ is already $(v,u)$-fixed, these are in bijection with the $(M^v\langle v\rangle\times L^u\langle u\rangle)$-orbits of $\calS$.
\end{proof}

\begin{proposition}\label{prop comspositioninthequotient}
	Let
	\[
	\Gamma=(N\times M)/\Delta(T,\beta,Z)
	\]
	and
	\[
	\Omega=(M\times L)/\Delta(Y,\alpha,X)
	\]
	be as in Lemma~\ref{lem conjugatesontheleft}. Then in the quotient category $\overline{R\calP^\Delta}$, we
	have
	\[
	\overline \Gamma\circ \overline \Omega=\sum_{m\in Z^v\backslash M^v/Y^v}
	\overline{(N\times L)/\Delta_m},
	\]
	where
	\[
	\Delta_m=\Delta\bigl(\beta(Z\cap\lexp{m}Y),\beta i_m\alpha,\alpha^{-1}(Z^m\cap Y)\bigr).
	\]
\end{proposition}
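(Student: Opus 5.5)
This is essentially immediate from Proposition~\ref{prop compositionformulabisets} once we know that composition in $\overline{R\calP^\Delta}$ is well defined and computed by representatives. The plan has three steps.

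First, I note that composition in the quotient category is induced from $R\calP^\Delta$. This is legitimate because $\calI^\Delta$ is an ideal, which is Lemma~\ref{lem conjugatesontheleft}. Concretely, $[\Gamma]-[\lexp{(w,1)}\Gamma]$ composed on the right with $[\Omega]$ gives $[\Gamma\times_{M,v}\Omega]-[\lexp{(w,1)}(\Gamma\times_{M,v}\Omega)]$. Since $\Omega\mapsto\lexp{(w,1)}{\Omega}$ commutes with disjoint unions, this difference lies in $\calI^\Delta$. For the other side, I would use that $\lexp{(v,1)}{\Omega}\cong\lexp{(v,1)^{-1}}{}$-type relations. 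By the first isomorphism of Lemma~\ref{lem conjugatesontheleft}, $\Gamma\times_{M,v}([\Omega]-[\lexp{(v,1)}\Omega])$ is a difference of the same shape for the $(w,1)$-twist. Hence $\overline\Gamma\circ\overline\Omega=\overline{[\Gamma\times_{M,v}\Omega]}$.

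Second, I apply Proposition~\ref{prop compositionformulabisets} to $\Gamma$ and $\Omega$, which are exactly its hypotheses. It gives an isomorphism of diagonal $((N,w),(L,u))$-bisets
\[
\Gamma\times_{M,v}\Omega\cong\coprod_{m\in Z^v\backslash M^v/Y^v}(N\times L)/\Delta_m .
\]
In the Grothendieck group $RB^\Delta((N,w),(L,u))$ this becomes $[\Gamma\times_{M,v}\Omega]=\sum_m[(N\times L)/\Delta_m]$, since disjoint union is the addition. Passing to the quotient modulo $\calI^\Delta((N,w),(L,u))$ then yields the stated formula.

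The only point needing care is the first step, namely that the quotient composition is well defined and given by representatives. That is exactly the ideal property recorded after the definition of $\calI^\Delta$ via Lemma~\ref{lem conjugatesontheleft}. I would also check that each summand is a legitimate object, i.e. that $\Delta_m$ is $(w,u)$-invariant for $m\in M^v$. This follows as in the proof of Lemma~\ref{lem conjugatesontheleft}: $Z\cap\lexp{m}Y$ is $v$-invariant, and $w\beta i_m\alpha=\beta i_m\alpha u$. Everything else is a direct transcription.
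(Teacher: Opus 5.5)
Your proposal is correct and matches the paper's proof: composition in $\overline{R\calP^\Delta}$ is induced from $R\calP^\Delta$, so $\overline\Gamma\circ\overline\Omega=\overline{\Gamma\times_{M,v}\Omega}$, and Proposition~\ref{prop compositionformulabisets} then gives the formula. Your extra checks of the ideal property and of the $(w,u)$-invariance of $\Delta_m$ are sound. The stray remark about ``$(v,1)^{-1}$-type relations'' is not needed, because the first isomorphism of Lemma~\ref{lem conjugatesontheleft} already handles that side.
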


\begin{proof}
	The composition in $\overline{R\calP^\Delta}$ is induced by the
	composition in $R\calP^\Delta$. Hence
	\[
	\overline \Gamma\circ \overline \Omega=\overline{\Gamma\times_{M,v}\Omega}.
	\]
	The result follows now from Proposition~\ref{prop compositionformulabisets}.
\end{proof}

\begin{theorem}\label{thm isomorphismofcats}
	The $R$-linear categories $\overline{R\calP^\Delta}$ and $R\calD^\Delta$ are isomorphic.
\end{theorem}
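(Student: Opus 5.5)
The plan is to define an $R$-linear functor $\Phi\colon\overline{R\calP^\Delta}\to R\calD^\Delta$ that is the identity on objects. On morphisms it is given by
\[
\Phi\bigl(\overline{(M\times L)/Q}\bigr):=\Psi^{M,v}_{L,u}(Q),
\]
where $Q$ runs over the $(v,u)$-invariant diagonal subgroups of $M\times L$. We then check three things: $\Phi$ is well defined, it is bijective on each Hom-module, and it respects composition and identities.

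\textbf{Well-definedness.} First, $\Psi^{M,v}_{L,u}(Q)$ depends only on the isomorphism class of $(M\times L)/Q$. Indeed, $\Psi^{M,v}_{L,u}(Q)$ is an induced module, so it is unchanged when $Q$ is replaced by a conjugate under $M^v\langle v\rangle\times L^u\langle u\rangle$. Such an element normalizes $\langle (v,u)\rangle$ up to $\langle(v,u)\rangle$ itself. It therefore conjugates $Q\langle (v,u)\rangle$ to $Q'\langle(v,u)\rangle$ and sends $(v,u)$ to an element of $(v,u)(M\times L)$, which is harmless since $\chi_{(v,u)}$ only sees cosets modulo the Sylow $p$-subgroup. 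In particular conjugation by $M^v\times L^u$ is allowed, so by Proposition~\ref{prop transitive diagonal bisets} we obtain an $R$-linear map $RB^\Delta((M,v),(L,u))\to RT^\Delta(\Mv,\Lu)$. Conjugation by $(v,1)$ is also allowed, so the generators $[\Omega]-[\lexp{(v,1)}{\Omega}]$ of $\calI^\Delta$ are sent to $0$ and the map factors through $\overline{RB^\Delta}((M,v),(L,u))$. The image lies in $E_{M,v}RT^\Delta(\Mv,\Lu)E_{L,u}$ by Theorem~\ref{thm basis}.

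\textbf{Bijectivity on Hom-modules.} By Lemma~\ref{lem basisofthequotient}, the elements $\overline{(M\times L)/Q}$, with $Q$ running over representatives of the $M^v\langle v\rangle\times L^u\langle u\rangle$-classes of $(v,u)$-invariant diagonal subgroups, form an $R$-basis of the source. By Theorem~\ref{thm basis}, the elements $\Psi^{M,v}_{L,u}(Q)$ over the same index set form an $R$-basis of the target. Hence $\Phi$ maps a basis bijectively onto a basis and is an $R$-module isomorphism on each Hom-module.

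\textbf{Functoriality.} Compare Proposition~\ref{prop comspositioninthequotient} with Theorem~\ref{thm compositionformula}. Both express the composite of the basis elements attached to $\Delta(T,\beta,Z)$ and $\Delta(Y,\alpha,X)$ as a sum over $m\in Z^v\backslash M^v/Y^v$ of the basis element attached to the same $\Delta_m$. So $\Phi$ preserves composition on basis elements, and by bilinearity it preserves composition everywhere. For identities, $\Id_{(L,u)}=L\cong(L\times L)/\Delta(L)$, and the scalar in its image is $|u|/\gcd(|u|,|u|)=1$, so
\[
\Phi(\overline{\Id_{(L,u)}})=\Ind^{\Lu\times\Lu}_{\Delta(L)\langle(u,u)\rangle}(\chi_{(u,u)}).
\]
By Remark~\ref{rem chi}(e) this equals $\chi^{\Lu\times\Lu}_{(u,u)}\cdot\Ind_{\Delta(\Lu)}^{\Lu\times\Lu}(k)=\chi_{(u,u)}\cdot k\Lu$. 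The computation in the proof of Lemma~\ref{lem theproductwithidempotents} shows this is $E_{L,u}\,k\Lu\,E_{L,u}$, the identity of $(L,u)$ in $R\calD^\Delta$. Alternatively, this already follows from functoriality together with bijectivity, since an isomorphism of non-unital structures that is bijective on Hom-modules sends identities to identities.

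\textbf{Main obstacle.} I expect the delicate step to be well-definedness, in particular verifying that the representatives used in Theorem~\ref{thm basis} and in Lemma~\ref{lem basisofthequotient} are compatible. Concretely, one must check that conjugation by $v\in\Mv$ fixes $\Psi$. The argument is that $(v,1)$ normalizes $Q\langle(v,u)\rangle$ when $Q$ is $(v,u)$-invariant, because $(v,1)(v,u)(v,1)^{-1}=(v,u)$. Therefore $\lexp{(v,1)}{Q}\langle(v,u)\rangle$ is a conjugate subgroup, and conjugating $\chi_{(v,u)}$ gives $\chi_{(v,u)}$ again.
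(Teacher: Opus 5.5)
Your proposal is correct and follows the paper's proof. Both arguments define the map basis-to-basis via Lemma~\ref{lem basisofthequotient} and Theorem~\ref{thm basis}, and both obtain compatibility with composition by comparing Proposition~\ref{prop comspositioninthequotient} with Theorem~\ref{thm compositionformula}. Your extra steps make explicit two points the paper leaves implicit: the check that $\Psi^{M,v}_{L,u}(Q)$ is invariant under conjugation by $M^v\langle v\rangle\times L^u\langle u\rangle$ (this group in fact centralizes $(v,u)$, so it is needed because the $\Delta_m$ in the composition formulas need not be the chosen representatives), and the verification that identities are preserved.
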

\begin{proof}
	Given $\DD$-pairs $(M,v)$ and $(L,u)$, consider the map
	\begin{align*}
		\phi^{M,v}_{L,u}:\overline{R B^\Delta}((M,v),(L,u)) &\longrightarrow E_{M,v}R T^\Delta(M\langle v\rangle,L\langle u\rangle)E_{L,u}\\
		\overline{(M\times L)/Q}&\longmapsto \Psi^{M,v}_{L,u}(Q)
	\end{align*}
	where $Q$ runs over the representatives appearing in Lemma~\ref{lem basisofthequotient}.
	By Lemma~\ref{lem basisofthequotient} and Theorem~\ref{thm basis}, each $\phi^{M,v}_{L,u}$ is an isomorphism. By Proposition~\ref{prop comspositioninthequotient} and Theorem~\ref{thm compositionformula}, they induce an isomorphism
	\[
	\phi:\overline{R\calP^\Delta}\to R\calD^\Delta
	\]
	of $R$-linear categories.
\end{proof}

\begin{corollary}
		The category $\calF_{R pp_k}^\Delta$ of diagonal $p$-permutation functors over $R$ is equivalent to the category of $R$-linear functors from $\overline{R\calP^\Delta}$ to $\lMod{R}$.
\end{corollary}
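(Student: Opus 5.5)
The corollary is a direct composition of two equivalences already established. Corollary~\ref{cor equivalenceoffuncats} identifies $\calF_{Rpp_k}^\Delta$ with the category of $R$-linear functors $R\calD^\Delta\to\lMod{R}$. Theorem~\ref{thm isomorphismofcats} gives an isomorphism of $R$-linear categories $\phi\colon\overline{R\calP^\Delta}\to R\calD^\Delta$. Precomposition with an isomorphism of $R$-linear categories induces an isomorphism, in particular an equivalence, between the corresponding functor categories. So I would argue in three steps:

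\begin{enumerate}
\item Restate Corollary~\ref{cor equivalenceoffuncats}. It rests on Propositions~\ref{prop compactandprojective} and~\ref{prop generatorsoffuncat}, which show that the functors $R\TD(-,\Lu)E_{L,u}$ form a family of compact projective generators of $\calF^\Delta_{Rpp_k}$. It also uses \cite[Theorem~5.1]{BoucYilmaz2022}, which identifies $\calF^\Delta_{Rpp_k}$ with $R$-linear functors on the full subcategory spanned by these generators. By Yoneda, the morphisms between the generators are $E_{M,v}R\TD(\Mv,\Lu)E_{L,u}$, so this full subcategory is exactly $R\calD^\Delta$.
\item Take $\phi$ from Theorem~\ref{thm isomorphismofcats}. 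It is the identity on objects, and on each Hom-module it is the $R$-linear bijection $\phi^{M,v}_{L,u}$, which sends the basis element $\overline{(M\times L)/Q}$ to $\Psi^{M,v}_{L,u}(Q)$.
\item Define $\phi^*\colon F\mapsto F\circ\phi$, from $R$-linear functors on $R\calD^\Delta$ to $R$-linear functors on $\overline{R\calP^\Delta}$. This is inverse to $(\phi^{-1})^*$, so it is an isomorphism of categories. Composing with step~1 gives the claimed equivalence.
\end{enumerate}

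The only real content lies in the earlier results. The step I would double-check is that $\phi$ is genuinely a functor. This means checking that:
\begin{itemize}
\item the compositions in $\overline{R\calP^\Delta}$ and in $R\calD^\Delta$ match on basis elements;
\item $\phi$ sends identities to identities.
\end{itemize}

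For compositions, Proposition~\ref{prop comspositioninthequotient} and Theorem~\ref{thm compositionformula} give the same sum over $m\in Z^v\backslash M^v/Y^v$ with the same groups $\Delta_m$. One must still check that, after replacing each $\Delta_m$ by its chosen representative up to $M^v\langle v\rangle\times L^u\langle u\rangle$-conjugacy, both sides are well defined. For this, note that $\Psi^{M,v}_{L,u}(Q)$ depends only on the $\Mv\times\Lu$-conjugacy class of $Q$, since induced modules are conjugation-invariant.

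For identities, $\Id_{(L,u)}=(L\times L)/\Delta(L)$ maps to $\Psi^{L,u}_{L,u}(\Delta(L))=\Ind^{\Lu\times\Lu}_{\Delta(L)\langle(u,u)\rangle}(\chi_{(u,u)})$. By Remark~\ref{rem chi}(e) and Lemma~\ref{lem theproductwithidempotents}, this equals $E_{L,u}\,k\Lu\,E_{L,u}$.

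Once functoriality of $\phi$ is accepted from Theorem~\ref{thm isomorphismofcats}, the corollary is formal.
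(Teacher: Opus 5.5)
Your proposal is correct and is the paper's own argument: it combines Corollary~\ref{cor equivalenceoffuncats} with the isomorphism $\phi\colon\overline{R\calP^\Delta}\to R\calD^\Delta$ of Theorem~\ref{thm isomorphismofcats}, and precomposition with $\phi$ gives an isomorphism of functor categories. Your extra checks on compositions and identities are sound, with one small caveat: invariance of $\Psi^{M,v}_{L,u}(Q)$ under general $g\in\Mv\times\Lu$ does not follow from conjugation-invariance of induction alone, since $g$ moves $(v,u)$ and a Glauberman argument would be needed, but you only need invariance under $M^v\langle v\rangle\times L^u\langle u\rangle$, whose elements centralize $(v,u)$, and there your justification is enough.
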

\begin{proof}
	This follows from Corollary~\ref{cor equivalenceoffuncats} and Theorem~\ref{thm isomorphismofcats}.
\end{proof}

\noindent\textbf{Acknowledgment} \ The first and second authors wish to express their gratitude for the hospitality of the Bilkent Mathematics Department during a visit in summer 2025.

%\bibliographystyle{abbrv}
%\bibliography{BBY3}

\centerline{\rule{5ex}{.1ex}}
\begin{flushleft}
	Robert Boltje, Department of Mathematics, University of California, Santa Cruz, 95064, California, USA.\\
	{\tt boltje@ucsc.edu} \vspace{1ex}\\
	Serge Bouc, CNRS-LAMFA, Universit\'e de Picardie, 33 rue St Leu, 80039, Amiens, France.\\
	{\tt serge.bouc@u-picardie.fr}\vspace{1ex}\\
	Deniz Y\i lmaz, Department of Mathematics, Bilkent University, 06800 Ankara, Turkey.\\
	{\tt d.yilmaz@bilkent.edu.tr}
\end{flushleft}
\end{document}